\documentclass[10pt, reqno]{amsart}
\usepackage{tikz}
\usetikzlibrary{matrix,arrows,decorations.pathmorphing}
\usepackage{amssymb}

\setlength{\parindent}{0pt}
\addtolength{\parskip}{7pt}

\newtheorem{proposition}{Proposition}[section]
\newtheorem{lemma}[proposition]{Lemma}
\newtheorem{corollary}[proposition]{Corollary}
\newtheorem{theorem}[proposition]{Theorem}
\newtheorem{remark}[proposition]{Remark}

\theoremstyle{definition}

\newcommand{\selabel}[1]{\label{se:#1}}

\newcommand{\eqlabel}[1]{\label{eq:#1}}

\def\<{\leqslant}
\def\>{\geqslant}
\def\a{\alpha}
\def\b{\beta}

\def\g{\gamma}
\def\G{\Gamma}
\def\O{\Omega}

\def\ol{\overline}

\def\t{\triangle}

\def\l{\lambda}
\def\L{\Lambda}
\def\s{\sigma}

\def\ot{\otimes}
\def\ra{\rightarrow}

\date{}

\begin{document}
\title[Representations of the Drinfeld doubles ]{Representations of the Drinfeld doubles of pointed rank one Hopf algebras}
\author{Hua Sun}
\address{School of Mathematical Science, Yangzhou University,
	Yangzhou 225002, China}
\email{huasun@yzu.edu.cn}
\author{Hui-Xiang Chen}
\address{School of Mathematical Science, Yangzhou University,
	Yangzhou 225002, China}
\email{hxchen@yzu.edu.cn}
\author{Yinhuo Zhang}
\address{Department of Mathematics $\&$  Statistics, University of Hasselt, Universitaire Campus, 3590 Diepeenbeek, Belgium
	Yangzhou 225002, China}
\email{yinhuo.zhang@uhasselt.be}
\thanks{2010 {\it Mathematics Subject Classification}. 16T99, 16E99, 16G70}
\keywords{Drinfeld double, Pointed Hopf algebra, representation, indecomposable module, Auslander-Reiten sequence}
\begin{abstract}
In this paper, we investigate the  representations of the Drinfeld doubles $D(H_{\mathcal{D}})$
of pointed rank one Hopf algebras $H_{\mathcal{D}}$ over an algebraically closed field
$\Bbbk$ of characteristic zero. We provide a complete classification of all finite-dimensional indecomposable $D(H_{\mathcal{D}})$-modules up to isomorphism and explicitly describe the Auslander-Reiten sequences in the category of finite-dimensional $D(H_{\mathcal{D}})$-modules. We show that  $D(H_{\mathcal{D}})$ is of  tame representation type.
\end{abstract}
\maketitle

\section{Introduction}\selabel{1}

Finite-dimensional Hopf algebras play a central role in modern algebra and quantum group theory, particularly through their connections with tensor categories, knot and 3-manifold invariants, and noncommutative geometry. Among them, pointed Hopf algebras of rank one constitute one of the simplest nontrivial families. Despite their apparent simplicity, their structure and representation theory already exhibit many of the key phenomena appearing in the general theory.

The systematic study of pointed Hopf algebras began in the 1990s with the classification program of Andruskiewitsch and Schneider, who reduced the classification problem to the analysis of Nichols algebras. In the rank-one case, the corresponding Nichols algebra is one-dimensional, and the resulting Hopf algebras generalize the classical Taft algebras introduced by E. Taft \cite{Taft}, which provided the first explicit examples of finite-dimensional nonsemisimple, noncommutative Hopf algebras.

The first systematic investigation of finite-dimensional pointed Hopf algebras of rank one was carried out by Krop and Radford \cite{KR}, who classified all such algebras over algebraically closed fields of characteristic zero. The classification in positive characteristic was later completed by Scherotzke \cite{Sche}. These Hopf algebras can also be realized as quotients of Hopf-Ore extensions of group algebras, a framework developed by Wang, You, and Chen. The classification over arbitrary fields was subsequently obtained by Wang, You and Chen \cite{WYC}.

The representation theory of these Hopf algebras has been the subject of intensive study. In \cite{Cib}, Cibils investigated the indecomposable modules over Taft algebras and derived explicit decomposition formulas for tensor products of indecomposable modules. Building on Cibils's results, Chen, Van Oystaeyen, and Zhang determined the Green rings of all Taft algebras \cite{CVOZ}, while Li and Zhang computed the Green rings of generalized Taft algebras \cite{LZ}. The representation theory of finite-dimensional pointed Hopf algebras of rank one was subsequently studied in full generality by Wang, Li, and Zhang \cite{WLZ14, WLZ16}. These works describe not only the categories of indecomposable modules but also the fusion rules governing their tensor product structures.

Another line of research concerns the Drinfeld doubles of pointed rank-one Hopf algebras, which are quasi-triangular and therefore play a fundamental role in the study of braided tensor categories. The representation theory of Drinfeld doubles of Taft algebras was studied in detail by Chen \cite{Ch2, Ch3, Ch4, Ch5}, who described the simple and indecomposable modules, projective covers, and block structures. These results highlight the deep connections between pointed rank-one Hopf algebras, their doubles, and quantum groups at roots of unity. Erdmann et al. investigated the representations and stable Green rings of Drinfeld doubles of generalized Taft algebras in \cite{EGST2006, EGST2019}, while Sun and Chen studied the representations of Drinfeld doubles of Radford Hopf algebras in \cite{SunChen}. The (generalized) Taft algebras are pointed rank-one Hopf algebras of nilpotent type, whereas the Radford algebras are of non-nilpotent type. Krop and Radford studied the representations of the Drinfeld double $D(H_{\mathcal{D}})$ of any finite-dimensional pointed rank one  Hopf algebra $H_{\mathcal{D}}$  over an algebraically closed field of characteristic zero, classifying all simple and projective indecomposable $D(H_{\mathcal{D}})$-modules in the case where the group $G(H_{\mathcal{D}})$ of group-like elements is abelian \cite{KR}. However, a complete classification of all finite-dimensional indecomposable $D(H_{\mathcal{D}})$-modules remains open.

In this article, building on the work of Krop and Radford \cite{KR}, we study the indecomposable representations of the Drinfeld doubles $D(H_{\mathcal{D}})$ of pointed rank-one Hopf algebras $H_{\mathcal{D}}$ over an algebraically closed field $\Bbbk$ of characteristic zero, assuming that $G(H_{\mathcal{D}})$ is abelian.

The paper is organized as follows.  In Section \ref{s2}, we recall the definition of a group datum, the structure of a pointed rank one Hopf algebra  $H_{\mathcal{D}}$, and its Drinfeld double $D(H_{\mathcal{D}})$.

In Section \ref{s3}, we review the simple modules and projective indecomposable modules over $D(H_{\mathcal{D}})$, showing that the Loewy length of $D(H_{\mathcal{D}})$ is three. We divide both the simple and non-simple projective indecomposable modules into the nilpotent and non-nilpotent cases, and provide standard $\Bbbk$-bases for each of these modules.

In Section \ref{s4}, we investigate the finite-dimensional indecomposable $D(H_{\mathcal{D}})$-modules of Loewy length two. Using Auslander-Reiten theory, we describe all such modules and classify them up to isomorphism. Moreover, all Auslander-Reiten sequences in the category of finite-dimensional $D(H_{\mathcal{D}})$-modules are explicitly presented. Finally, we show that $D(H_{\mathcal{D}})$ is of tame representation type.
The main results  of the paper are  as follows:

\textbf{Theorem 4.23}
Assume $m>1$. A complete set of representatives of isomorphism classes of finite dimensional indecomposable $D(H_{\mathcal D})$-modules is given by
$$\left\{\begin{array}{c}
V(l',\l'), P(l,\l), \Omega^{\pm s}V(l,\l),\\
T_s(l,\l), \ol{T}_s(l,\l), M_s(l,\l,\eta)\\
\end{array}\left|\begin{array}{c}
1\<l\<n-1, 1\<l'\<n, s\>1,\\
\l \in I_l, \l'\in I_{l'}, \eta\in\Bbbk^{\times}\\
\end{array}\right.\right\}.$$

\textbf{Theorem 4.30}
Assume $m=1$. A complete set of  representatives  of isomorphism classes of finite dimensional indecomposable $D(H_{\mathcal D})$-modules is given by
$$\left\{\begin{array}{c}
V(l',\l'), P(l,\l),\\ \Omega^{\pm s}V(l,\l),
 W_s(l,\l,\eta)\\
\end{array}\left|\begin{array}{c}
1\<l\<n-1, 1\<l'\<n, s\>1,\\
\l \in I_l, \l'\in I_{l'}, \eta\in\overline{\Bbbk}\\
\end{array}\right.\right\}.$$

Throughout, let $\Bbbk$ be an algebraically closed field with char$\Bbbk=0$ and $\Bbbk^{\times}=\Bbbk\backslash\{0\}$. Unless
otherwise stated, all algebras and Hopf algebras are
defined over $\Bbbk$; all modules are finite dimensional and left modules;
dim and $\otimes$ denote ${\rm dim}_{\Bbbk}$ and $\otimes_{\Bbbk}$,
respectively. Let $\mathbb Z$ denote the set of all integers, ${\mathbb Z}_n:={\mathbb Z}/n{\mathbb Z}$
for an integer $n$, and let $\mathbb{N}$ denote all non-negative integers.
The references \cite{Ka, Mon, Sw} are basic references for the theory of Hopf algebras and quantum groups. The readers may refer to \cite{ARS} for the representation theory of algebras.

\section{Pointed Hopf algebras of rank one and their doubles}\label{s2}

In this section, we recall the pointed Hopf algebras of rank one and  their Drinfeld doubles.

Let $0\neq q\in\Bbbk$. For any integer $n>0$, set $(n)_q=1+q+\cdots+q^{n-1}$. Observe that $(n)_q=n$ when $q=1$ and $(n)_q=\frac{q^n-1}{q-1}$ when $q\neq 1$. Define the $q$-factorial of $n$ by $(0)!_q=1$ and $(n)!_q=(n)_q(n-1)_q\cdots(1)_q$ for $n>0$, see \cite[p.74]{Ka}.

A quadruple $\mathcal{D}=(G, \chi, a, \a)$ is called a {\it group datum} if $G$ is a finite group, $\chi$ is a $\Bbbk$-valued character of $G$, $a$ is a central element of $G$ and $\a\in\Bbbk$ subject to $\chi^n=1$ or $\a(a^n-1)=0$, where $n$ is the order of $\chi(a)$. The group datum $\mathcal{D}$ is of {\it nilpotent type} if $\a(a^n-1)=0$, and it is of {\it non-nilpotent type} if $\a(a^n-1)\neq 0$ and $\chi^n=1$. For any group datum $\mathcal{D}=(G, \chi, a, \a)$, Krop and Radford constructed an associated finite dimensional pointed rank one  Hopf algebra $H_{\mathcal{D}}$ and classified such Hopf algebras. They also described the Drinfeld doubles $D(H_{\mathcal D})$ of $H_{\mathcal D}$, see \cite{KR}.

Let $\mathcal{D}=(G, \chi, a, \a)$ be a group datum. The Hopf algebra $H_{\mathcal{D}}$ is generated as an algebra by $G$ and $x$ subject to the group relations for $G$, $$x^n=\a(a^n-1) \text{ and } xg=\chi(g) gx\ \text{ for all }g\in G.$$
The comultiplication $\t$ is given by $$\t(x)=x\ot a+1\ot x \text{ and } \t(g)=g\ot g, \ g\in G.$$
$H_{\mathcal{D}}$ has a $\Bbbk$-basis $\{gx^j|g\in G, 0\<j\<n-1\}$. If $\mathcal{D}$ is of {\it non-nilpotent type}, then $H_{\mathcal{D}}\cong H_{\mathcal{D'}}$  as Hopf algebras, where $\mathcal{D}'=(G, \chi, a, 1)$. Therefore, we always assume that $\a=1$ whenever $\mathcal{D}$ is of {\it non-nilpotent type}. For the details, one can refer to \cite{KR}.

In the sequel, we let $\mathcal{D}=(G, \chi, a, \a)$ be a group datum, and assume that $G$ is abelian. Let $\rho=\chi(a)$ and denote by $n$ the order of $\rho$. Let $\Gamma={\rm Hom}(G,\Bbbk^{\times})$ denote the group of $\Bbbk$-valued characters. Note that $D(H_{\mathcal D})=H_{\mathcal D}^{*\rm cop}\bowtie H_{\mathcal D}$ is generated as an algebra by its two sub-Hopf algebra $H_{\mathcal D}^{*\rm cop}$ and $H_{\mathcal D}$, where  $H^*_{\mathcal D}$ is the dual Hopf algebra of $H_{\mathcal D}$. $H_{\mathcal D}^{*\rm cop}$ is generated as an algebra by $\xi$ and $\Gamma$ subject to the following relations:
$$\xi^n=0 \text{ and }  \xi\gamma=\gamma(a)\gamma\xi \text{ for all } \gamma\in\Gamma.$$
The coalgebra structure of $H^{*\rm cop}_{\mathcal D}$ is determined by
$\t(\xi)=\xi\ot\varepsilon+ \chi\ot \xi$,
$\t(\g)=\g\ot \g$ if $\mathcal{D}$ is of nilpotent type, and
$$\t(\g)=\g\ot \g +(\g^n(a)-1)\sum_{l+r=n}\frac{1}{(l)_{\rho}!(r)_{\rho}!}\g\chi^l\xi^r\ot\g\xi^l$$
if $\mathcal{D}$ is of non-nilpotent type, where $\g\in\G$.

\begin{proposition}\cite[Proposition 5]{KR}\label{2.1}
The Drinfeld double $D(H_{\mathcal D})$ is generated as an algebra by $G$, $x$, $\Gamma$ and $\xi$ subject to the relations defining $H_{\mathcal D}$ and $H^{*\rm cop}_{\mathcal D}$ and the following relations:
\begin{enumerate}
\item[(a)] $g\g=\g g$ for all $g\in G$ and $\g\in \Gamma$;
\item[(b)] $\xi g=\chi^{-1}(g)g\xi$ for all $g\in G$;
\item[(c)] $[x,\xi]=a-\chi$;
\item[(d)] $\g(a)x\g=\g x$ if $\mathcal D$ is {\bf nilpotent};
\item[(e)] $\g(a)x\g=\g x+\frac{\g^n(a)-1}{(n-1)_\rho!}\g(\rho a-\chi)\xi^{n-1}$  if $\mathcal D$ is {\bf non-nilpotent}.
\end{enumerate}
\end{proposition}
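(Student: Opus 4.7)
The plan is to verify the five cross-relations (a)--(e) by direct computation inside $D(H_{\mathcal D}) = H^{*\mathrm{cop}}_{\mathcal D}\bowtie H_{\mathcal D}$ and then to conclude that this gives a full presentation by a dimension count. The key tool will be the standard bicrossed-product multiplication: for $f\in H^{*\mathrm{cop}}_{\mathcal D}$ and $h\in H_{\mathcal D}$, moving $h$ past $f$ is governed by
\[
(1\bowtie h)(f\bowtie 1)=\sum \langle f_{(1)},S^{-1}(h_{(3)})\rangle\,\langle f_{(3)},h_{(1)}\rangle\, f_{(2)}\bowtie h_{(2)},
\]
where the iterated coproduct of $f$ is taken in $H^{*\mathrm{cop}}_{\mathcal D}$. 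Once each relation has been verified, I would observe that the algebra $A$ presented by the listed generators and relations admits a spanning set $\{\g\xi^i g x^j\mid \g\in\G,\ g\in G,\ 0\leq i,j\leq n-1\}$ of cardinality $|\G|\cdot|G|\cdot n^2=\dim D(H_{\mathcal D})$, so the canonical surjection $A\twoheadrightarrow D(H_{\mathcal D})$ must be an isomorphism.

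For relations (a) and (b), the task is to commute the group-like $g\in G$ past $\g$ or $\xi$. In (a), when $\mathcal D$ is nilpotent $\g$ is group-like in $H^{*\mathrm{cop}}_{\mathcal D}$ and the two pairings telescope to $\g(g^{-1})\g(g)=1$; when $\mathcal D$ is non-nilpotent, the extra summands in $\Delta(\g)$ carry a factor $\xi^r$ with $r\geq 1$ that pairs trivially with $g\in G$, so those terms vanish and one still obtains $g\g=\g g$. For (b), expanding $\Delta(\xi)=\xi\ot\varepsilon+\chi\ot\xi$ together with $\langle\xi,g\rangle=0$ isolates a single surviving summand and produces the factor $\chi^{-1}(g)$. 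Relation (c) will be the classical $[x,\xi]$ computation in a Drinfeld double: expanding with $\Delta(x)=x\ot a+1\ot x$ and the coproduct of $\xi$, exactly two cross-terms survive and combine into $a-\chi$.

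The hard part will be relations (d) and (e). In the nilpotent case $\g$ is still group-like in $H^{*\mathrm{cop}}_{\mathcal D}$, so only the pairing $\langle\g,a\rangle=\g(a)$ enters and (d) drops out at once. In the non-nilpotent case, one must expand $\Delta^2(\g)$ using the corrected coproduct
\[
\Delta(\g)=\g\ot\g+(\g^n(a)-1)\sum_{l+r=n}\tfrac{1}{(l)_\rho!(r)_\rho!}\g\chi^l\xi^r\ot\g\xi^l,
\]
pair its outer tensor factors against components of $\Delta^2(x)$, and track which summands survive the pairings $\xi(x)=1$, $\xi(a)=0$, and $\chi(a)=\rho$. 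The main obstacle is exactly this bookkeeping: one must identify the unique combination of indices that pairs nontrivially with the two occurrences of $x$, verify that the resulting combinatorial coefficient collapses to $1/(n-1)_\rho!$, and check that the residual algebra factor assembles into $\g(\rho a-\chi)\xi^{n-1}$. Once this identification is carried out, relation (e) will follow by isolating $\g x$ on one side and rewriting the remaining terms using the already-established relation (c).
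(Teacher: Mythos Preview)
The paper does not supply its own proof of this proposition: it is quoted from \cite[Proposition~5]{KR} and stated without argument. There is therefore no proof in the present paper to compare your proposal against.

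Your plan is nonetheless the standard route and is essentially what the original reference carries out: verify each cross-relation by expanding the double's straightening formula
\[
(1\bowtie h)(f\bowtie 1)=\sum \langle f_{(1)},S^{-1}(h_{(3)})\rangle\,\langle f_{(3)},h_{(1)}\rangle\, f_{(2)}\bowtie h_{(2)},
\]
and then argue that the abstractly presented algebra surjects onto $D(H_{\mathcal D})$ with a spanning set of the correct size $|\G|\,|G|\,n^2=(\dim H_{\mathcal D})^2$. Your diagnosis of the individual relations is accurate: (a), (b), (d) are immediate because the relevant elements are group-like (and in the non-nilpotent case the extra summands of $\Delta(\g)$ die on pairing with $g\in G$); (c) is the classical two-term survival; and (e) is the only genuinely laborious step, requiring the triple coproduct of $\g$ in the non-nilpotent case and the identification of the single index pattern that pairs nontrivially against the components of $\Delta^2(x)$. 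One small caution: make sure your convention for the double matches the one used here (there are several sign/antipode variants in the literature), since the precise form of the correction term in (e), in particular the factor $\rho a-\chi$ rather than $a-\chi$, is sensitive to this choice.
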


\section{Simple modules and projective modules}\label{s3}

Throughout this and the next sections, for any $D(H_{\mathcal D})$-module $V$, let $P(V)$ and $I(V)$ denote the projective cover and the injective envelope of $V$, respectively. Let $l(V)$ and ${\rm rl}(V)$ denote the length and the radical length (Loewy length) of $V$, respectively. Moreover, let $sV$ denote the direct sum of $s$ copies of $V$ for any integer $s\>0$, where $sV=0$ when $s=0$. Let $V^x:=\{v\in V|xv=0\}$ and $V^{\xi}:=\{v\in V|\xi v=0\}$. For any $v_1, \cdots, v_s\in V$, denote by $\langle v_1, \cdots, v_s\rangle$ the submodule of $V$ generated by $\{v_1, \cdots, v_s\}$.

\subsection{Simple modules}\label{s3.1}

Krop and Radford classified the simple modules over $D(H_{\mathcal D})$ for any group datum $\mathcal{D}=(G,\chi,a,\a)$ with $G$ being abelian in \cite[Subsection 2.2]{KR}. In this subsection, we recall the simple $D(H_{\mathcal D})$-modules.

Let $\widehat{H^{*\rm cop}_{\mathcal D}}$ be the subalgebra of $D(H_{\mathcal D})$ generated by $H_{\mathcal D}^{*\rm cop}$ and $G$. Then $\widehat{H^{*\rm cop}_{\mathcal D}}=H^{*\rm cop}_{\mathcal D}\sharp \Bbbk G$, a smash product. One can easily check that $\widehat{H^{*\rm cop}_{\mathcal D}}\cong \Bbbk (G\times\Gamma)\oplus J(\widehat{H^{*\rm cop}_{\mathcal D}})$, where $J(\widehat{H^{*\rm cop}_{\mathcal D}})$ is the Jacobson radical of $\widehat{H^{*\rm cop}_{\mathcal D}}$. Thus, all simple  $\widehat{H^{*\rm cop}_{\mathcal D}}$-modules are one-dimensional. They are in 1-1 correspondence with characters of $G\times\Gamma$. For each character $\l: G\times \Gamma\rightarrow \Bbbk$, denote by $\Bbbk_{\l}$ the associated one dimensional $\widehat{H^{*\rm cop}_{\mathcal D}}$-module. Let $1_{\l}$ be a fixed non-zero element of $\Bbbk_{\l}$.
Then  $\Bbbk_{\l}$ is described by the relations $(g\g)1_{\l}=\l(g\g)1_{\l}$ and $J(\widehat{H^{*\rm cop}_{\mathcal D}})1_{\l}=0$, where we write $g\g$ for $(g,\g)\in G\times\Gamma$.

For a character $\l$, define a left $D(H_{\mathcal D})$-module $Z(\l):=D(H_{\mathcal D})\ot_{\widehat{H^{*\rm cop}_{\mathcal D}}}\Bbbk_{\l}$. For simplicity, the element $1\ot_{\widehat{H^{*\rm cop}_{\mathcal D}}}1_{\l}\in Z(\l)$ is still denoted by $1_{\l}$.
It is easy to see that $\{x^i1_{\l}|0\<i\<n-1\}$ is a $\Bbbk$-basis of $Z(\l)$. For an element $g\in G$, denote by $\hat{g}$ the character $\g\rightarrow \g(g)$ of $\Gamma$. Let $\phi$ be the character $\chi^{-1}\hat{a}$ of $G\times\Gamma$.

\begin{proposition}\cite[Proposition 6]{KR}\label{3.1}
 Let $Z(\l)$ be a $D(H_{\mathcal D})$-module. Then the following statements hold:
\begin{enumerate}
\item[(1)] $Z(\l)$ is simple if and only if $\l(a\chi^{-1})\notin\{1,\rho, \cdots, \rho^{n-2}\}$.
\item[(2)] If $\l(a\chi^{-1})=\rho^s$ for some $0\<s\<n-2$, then $Z(\l)$ contains a unique non-trivial submodule, which is generated by $x^{s+1}1_{\l}$.
\end{enumerate}
\end{proposition}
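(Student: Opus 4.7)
The plan is to analyze $Z(\lambda)$ via its weight decomposition under $G\times\Gamma$ combined with the explicit $\xi$-action on the canonical basis $\{x^i 1_\lambda : 0\le i\le n-1\}$. Using the relations of Proposition~\ref{2.1}, I would first verify that each $x^i 1_\lambda$ is a $G\times\Gamma$-eigenvector of character $\phi^i\lambda$: from $gx^i = \chi(g)^{-i}x^i g$ we get $g\cdot x^i 1_\lambda = \chi(g)^{-i}\lambda(g)\,x^i 1_\lambda$, and relations (d)/(e) give $\gamma\cdot x^i 1_\lambda = \gamma(a)^i\lambda(\gamma)\,x^i 1_\lambda$ (the $\xi^{n-1}$-correction appearing in (e) contributes nothing along the induction because, as the next step shows, $\xi^{n-1}x^j 1_\lambda=0$ for $0\le j\le n-2$). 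Since $\phi(\chi)=\chi(a)=\rho$ has order $n$, these characters are pairwise distinct, so every submodule of $Z(\lambda)$ is a sum of weight spaces and therefore spanned by a subset of the $x^i 1_\lambda$.

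Next, I would extract a closed form for the $\xi$-action. Starting from $\xi\cdot 1_\lambda=0$ and $\xi x = x\xi - (a-\chi)$, the eigenvalue identities $\chi\cdot x^i 1_\lambda = \rho^i\lambda(\chi)x^i 1_\lambda$ and $a\cdot x^i 1_\lambda = \rho^{-i}\lambda(a)x^i 1_\lambda$ give by induction $\xi\cdot x^i 1_\lambda = f_i(\lambda)x^{i-1}1_\lambda$ with recurrence
\begin{equation*}
f_i(\lambda)-f_{i-1}(\lambda) = \rho^{i-1}\lambda(\chi) - \rho^{-(i-1)}\lambda(a),\qquad f_0(\lambda)=0.
\end{equation*}
Summing and using $\sum_{j=0}^{i-1}\rho^{-j}=\rho^{-(i-1)}\sum_{j=0}^{i-1}\rho^j$ collapses this to $f_i(\lambda)=\lambda(\chi)(i)_\rho\bigl(1-\mu\rho^{-(i-1)}\bigr)$ with $\mu:=\lambda(a\chi^{-1})$. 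Since $(i)_\rho\ne 0$ for $1\le i\le n-1$, we see $f_i(\lambda)=0$ in this range exactly when $\mu=\rho^{i-1}$, i.e.\ exactly when $\mu\in\{1,\rho,\ldots,\rho^{n-2}\}$.

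With these two ingredients the proposition follows quickly. Given any proper nonzero submodule $N$, let $i_0$ be the least index with $x^{i_0}1_\lambda\in N$; then $\xi\cdot x^{i_0}1_\lambda = f_{i_0}(\lambda)x^{i_0-1}1_\lambda\in N$ forces $f_{i_0}(\lambda)=0$ by minimality, hence $\mu=\rho^{i_0-1}$. This proves the ``only if'' direction of (1) and pins down $i_0=s+1$ in (2). Conversely, when $\mu=\rho^s$ with $0\le s\le n-2$, I would check directly that $N_s:=\mathrm{span}(x^{s+1}1_\lambda,\ldots,x^{n-1}1_\lambda)$ is a submodule: it is a sum of weight spaces, $\xi$-stable because the boundary term $\xi\cdot x^{s+1}1_\lambda=f_{s+1}(\lambda)x^s 1_\lambda$ vanishes, and $x$-stable because $x\cdot x^{n-1}1_\lambda=x^n 1_\lambda=0$ — trivial in the nilpotent case, and in the non-nilpotent case following from $\lambda(a)^n=\rho^{sn}\lambda(\chi)^n=\lambda(\chi^n)=1$ (using $\chi^n=1$). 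Clearly $x^{s+1}1_\lambda$ generates $N_s$, and the minimality argument simultaneously yields $N_s\subseteq N$ and $N\subseteq N_s$ (since no $x^j 1_\lambda$ with $j\le s$ can lie in $N$), proving uniqueness.

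The delicate step I anticipate is the $\gamma$-action in the non-nilpotent case: the $\xi^{n-1}$-correction in relation (e) must be shown to be inert during the induction. The remedy is to interleave the two inductions — prove the $\xi$-formula first, which shows $\xi^{n-1}x^j 1_\lambda=0$ for $j\le n-2$, then feed that vanishing back into the $\gamma$-induction. Everything else is a careful but routine bookkeeping of the weight argument and the closed form of $f_i$.
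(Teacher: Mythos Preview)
The paper does not give its own proof of this proposition; it is quoted directly from \cite[Proposition~6]{KR}. Your argument is correct and is the natural one: the $G\times\Gamma$-weight decomposition forces any submodule to be a span of basis vectors $x^i 1_\lambda$, and the closed form $f_i(\lambda)=\lambda(\chi)(i)_\rho(1-\mu\rho^{1-i})$ (which matches the paper's $\alpha_i(l,\lambda)$) pins down exactly when such a span is $\xi$-stable.

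One simplification for your ``delicate step'': the interleaving is not actually needed. The $\xi$-recursion only requires the action of $a$ and of the specific character $\chi\in\Gamma$ on $x^{i-1}1_\lambda$; but for $\gamma=\chi$ the correction term in relation~(e) of Proposition~\ref{2.1} vanishes outright, since in the non-nilpotent case $\chi^n=1$ forces $\chi^n(a)-1=0$. Hence the $\xi$-formula can be proved first in isolation, and then fed into the general $\gamma$-induction exactly as you describe. Your interleaved scheme also works, but this observation removes the apparent circularity entirely.
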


Let rad($Z(\l)$) denote the radical of $Z(\l)$. Then rad($Z(\l)$) is the unique proper submodule of $Z(\l)$ if $\l(a\chi^{-1})=\rho^s$ for some $0\<s\<n-2$, and rad($Z(\l)$)=0 otherwise. Set $L(\l):=Z(\l)/{\rm rad}Z(\l)$, and denote the element $1_{\l}+{\rm rad}Z(\l)\in L(\l)$ still by $1_{\l}$. Let $\Lambda:=\widehat{G\times \Gamma}$ be the set of characters of $G\times \Gamma$. Define ${\rm ev}_{a\chi^{-1}}: \L\rightarrow\Bbbk^{\times}$ by ${\rm ev}_{a\chi^{-1}}(\l)=\l(a\chi^{-1})$, $\l\in\L$, and let $K:={\rm Ker}({\rm ev}_{a\chi^{-1}})$.

\begin{proposition}\cite[Theorem 2]{KR}\label{3.2}
Retain the above notation. The following statements hold:
\begin{enumerate}
\item[(1)] $L(\mu)\cong L(\l)$ if and only if $\mu=\l$. There are $|G|^2$ pairwise non-isomorphic simple  $D(H_{\mathcal D})$-modules.
\item[(2)] For every $d$ with $1\<d\<n-1$, there are $|K|$ non-isomorphic simple $D(H_{\mathcal D})$-modules of dimension $d$.
\item[(3)] The number of $n$-dimensional simple $D(H_{\mathcal D})$-modules is $|G|^2-(n-1)|K|$ and is also greater than or equal to $|K|$.
\end{enumerate}
\end{proposition}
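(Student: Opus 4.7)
My plan is to combine a standard Verma-module / Frobenius-reciprocity argument with a fiber count on the evaluation homomorphism ${\rm ev}_{a\chi^{-1}}$. First I would show that every simple $D(H_{\mathcal D})$-module is of the form $L(\lambda)$. Since $\xi^n=0$, the subspace $V^{\xi}=\{v\in V:\xi v=0\}$ is nonzero for any nonzero $V$; from relations (a), (b), and $\xi\gamma=\gamma(a)\gamma\xi$ one extracts $(g\gamma)\xi=\phi^{-1}(g\gamma)\xi(g\gamma)$, so $V^{\xi}$ is $G\times\Gamma$-stable and decomposes into weight spaces. Picking any weight $\lambda$ occurring in $V^{\xi}$ gives an embedding $\Bbbk_{\lambda}\hookrightarrow V$ of $\widehat{H^{*\rm cop}_{\mathcal D}}$-modules, and the tensor-hom adjunction yields a nonzero $D(H_{\mathcal D})$-homomorphism $Z(\lambda)\to V$, which is surjective when $V$ is simple. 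Thus $V\cong L(\lambda)$ for some $\lambda$.

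For the distinctness half of (1), I would compute the $\xi$-action on the standard basis. The commutator $[x,\xi]=a-\chi$ together with $\xi\cdot 1_{\lambda}=0$ gives by induction $\xi x^i 1_{\lambda}=c_i(\eta)x^{i-1}1_{\lambda}$, where $\eta:=\lambda(a\chi^{-1})$ and $c_i(\eta)$ is a scalar that vanishes iff $\eta=\rho^{i-1}$ --- precisely the condition recorded in Proposition~\ref{3.1}(2). It follows in all cases that $\ker(\xi|_{L(\lambda)})$ is the one-dimensional line spanned by the image of $1_{\lambda}$, which as a $G\times\Gamma$-module carries the character $\lambda$. Hence $\lambda$ is intrinsic to $L(\lambda)$, giving $L(\lambda)\cong L(\mu)\Leftrightarrow\lambda=\mu$ and the count $|\Lambda|=|G\times\Gamma|=|G|^2$.

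For (2) and (3), the map ${\rm ev}_{a\chi^{-1}}\colon\Lambda\to\Bbbk^{\times}$ is a group homomorphism with kernel $K$, so each nonempty fiber has size $|K|$. Its image contains $\rho$, because the character $\hat{a}^{-1}\in\Lambda$ (trivial on $G$, sending $\gamma\in\Gamma$ to $\gamma(a)^{-1}$) satisfies ${\rm ev}_{a\chi^{-1}}(\hat{a}^{-1})=\chi^{-1}(a)^{-1}=\rho$; hence $\langle\rho\rangle=\{1,\rho,\ldots,\rho^{n-1}\}\subseteq{\rm Im}({\rm ev}_{a\chi^{-1}})$. By Proposition~\ref{3.1}, $\dim L(\lambda)=s+1$ exactly when $\lambda(a\chi^{-1})=\rho^s$ for $0\leq s\leq n-2$, so the fiber over $\rho^{d-1}$ supplies $|K|$ simples of dimension $d$ for each $1\leq d\leq n-1$. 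The remaining $|G|^2-(n-1)|K|$ characters produce the $n$-dimensional simples $L(\lambda)=Z(\lambda)$, and the inequality $|G|^2-(n-1)|K|\geq|K|$ reduces to $|{\rm Im}({\rm ev}_{a\chi^{-1}})|\geq n$, which is immediate from $\langle\rho\rangle\subseteq{\rm Im}({\rm ev}_{a\chi^{-1}})$.

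The main technical nuisance is the non-nilpotent case, where relation (e) introduces a $\xi^{n-1}$-correction in the $\gamma$-$x$ commutation. Since $\xi^{n-1}$ annihilates $x^i 1_{\lambda}$ for every $i\leq n-2$, however, the matrix of $G\times\Gamma$ on the basis $\{x^i 1_{\lambda}\}_{i=0}^{n-1}$ remains triangular with diagonal entries $(\lambda\phi^i)(g\gamma)$, and this is all that is needed to carry out the $\ker\xi$-computation and the weight identification uniformly in both nilpotent and non-nilpotent types.
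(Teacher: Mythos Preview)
The paper does not supply its own proof of this proposition; it is quoted verbatim from \cite[Theorem~2]{KR} and used as a black box. So there is no in-paper argument to compare against.

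Your proposal is a correct and self-contained proof. The surjectivity step (every simple is an $L(\lambda)$) via a highest-$\xi$-weight vector and Frobenius reciprocity is standard and sound. The injectivity step is also right: for every $\lambda$ the scalars $\alpha_i$ governing $\xi\cdot x^i1_{\lambda}$ are nonzero for $1\le i\le \dim L(\lambda)-1$, so $\ker\xi|_{L(\lambda)}$ is the line through $1_{\lambda}$, whose $G\times\Gamma$-weight recovers $\lambda$. Your fiber-counting argument for (2)--(3) is clean, and the exhibition of $\hat{a}^{-1}$ to show $\rho\in{\rm Im}({\rm ev}_{a\chi^{-1}})$ gives the needed lower bound. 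One small remark: your final paragraph about relation~(e) is more caution than necessary. The weight of $1_{\lambda}$ is $\lambda$ simply by the definition of the induced module $Z(\lambda)$, independently of (d) versus (e); the $\xi^{n-1}$-correction only affects the $\gamma$-action on $x^i1_{\lambda}$ for $i\ge 1$, which your argument never needs.
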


Let $\widetilde{\L}$ be the set of weights that map $a\chi^{-1}$ to a $\rho^s$ for some  $0\<s\<n-2$. For a $\l\in \L$, denote by $d(\l)$ the above integer $s$ if $\l\in \widetilde{\L}$, and set  $d(\l)=-1$ otherwise. Define a mapping $\sigma: \Lambda \rightarrow \Lambda$ by $\s(\l)=\l \phi^{d(\l)+1}$. Let $\tau=\sigma^2$.

\begin{lemma}\cite[Lemma 5]{KR}\label{3.3}
Retain the above  notation and let $\l\in\widetilde{\L}$. The following statements hold:
\begin{enumerate}
\item[(1)] $d(\s(\l))=n-d(\l)-2$.
\item[(2)] $\s(\widetilde{\L})=\widetilde{\L}$.
\item[(3)] Let $m=\frac{1}{n}{\rm ord}(a)$ if $\mathcal D$ is of non-nilpotent type, and $m=\frac{1}{n}{\rm lcm(ord}(a), {\rm ord}(\chi))$, otherwise. Then the mapping $\s$ has order $2m$, and $\tau$ has order $m$.
\item[(4)] $L(\tau^k\s(\l))\ncong L(\tau^{k'}(\l))$ for any $k,k'\in \mathbb{N}$.
\end{enumerate}
\end{lemma}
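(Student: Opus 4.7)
The plan is to reduce everything to character computations in $\widehat{G\times\G}$, anchored by the identity $\phi(a\chi^{-1}) = \rho^{-2}$, which follows from $\phi = \chi^{-1}\hat{a}$ and $\rho = \chi(a)$.

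For (1), this yields
$$\s(\l)(a\chi^{-1}) = \l(a\chi^{-1})\,\phi^{d(\l)+1}(a\chi^{-1}) = \rho^{d(\l)}\rho^{-2(d(\l)+1)} = \rho^{n-d(\l)-2},$$
so $\s(\l)\in\widetilde{\L}$ with $d(\s(\l)) = n - d(\l) - 2$. Iterating produces $\tau(\l) = \s^2(\l) = \l\phi^n$, which is injective on $\L$, so $\s$ is injective on $\widetilde{\L}$ and bijective by finiteness, giving (2).

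For (3), induction yields $\tau^k(\l) = \l\phi^{kn}$, so ${\rm ord}(\tau) = {\rm ord}(\phi)/n$ (using $n\mid{\rm ord}(a)\mid{\rm ord}(\phi)$). Pontryagin duality gives ${\rm ord}(\hat a) = {\rm ord}(a)$, whence ${\rm ord}(\phi) = {\rm lcm}({\rm ord}(\chi),{\rm ord}(a))$. In the non-nilpotent case, $\chi^n=1$ combined with $n\mid{\rm ord}(\chi)$ forces ${\rm ord}(\chi)=n$, so ${\rm ord}(\tau) = {\rm ord}(a)/n = m$; in the nilpotent case ${\rm ord}(\tau) = {\rm lcm}({\rm ord}(a),{\rm ord}(\chi))/n = m$ directly. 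Thus $\s^{2m}=\tau^m=1$, and I would exclude smaller orders by assuming $\s^k=1$ with $1\< k<2m$: even $k$ contradicts ${\rm ord}(\tau)=m$, while for odd $k$ a second induction gives $\s^k(\l) = \l\phi^{(k-1)n/2+d(\l)+1}$. When $n\> 3$, the map $\l\mapsto\l(a\chi^{-1})$ surjects onto the group of ${\rm lcm}({\rm ord}(a),{\rm ord}(\chi))$-th roots of unity (which contains $\langle\rho\rangle$), so $d$ attains both $0$ and $1$ on $\widetilde{\L}$; forcing the formula to fix two such weights yields $\phi=1$, a contradiction. For $n=2$, $\s$ is simply multiplication by $\phi$, of order ${\rm ord}(\phi) = 2m$.

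For (4), the same formulas give $\tau^k\s(\l) = \l\phi^{kn+d(\l)+1}$ and $\tau^{k'}(\l) = \l\phi^{k'n}$; equality would force $mn\mid(k-k')n+d(\l)+1$, hence $n\mid d(\l)+1$, contradicting $1\< d(\l)+1\< n-1$. Combined with Proposition 3.2(1), this yields (4). The main obstacle is the odd-$k$ subcase of (3), where ruling out an order of $m$ for $\s$ requires the surjectivity argument to produce weights with distinct $d$-values, together with a separate check in the degenerate case $n=2$.
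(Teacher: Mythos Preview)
Your proof is correct. The paper itself does not prove this lemma; it is quoted verbatim from \cite[Lemma 5]{KR}, so there is no in-paper argument to compare against. Your computations are sound: the key identities $\phi(a\chi^{-1})=\rho^{-2}$ and $\tau(\l)=\l\phi^n$ are exactly what drives the result, the order calculation ${\rm ord}(\phi)={\rm lcm}({\rm ord}(a),{\rm ord}(\chi))$ is the content of Remark~\ref{3.4}, and your handling of the odd-$k$ case in (3) --- using two weights with different $d$-values when $n\geq 3$ and treating $n=2$ separately --- is the natural way to rule out a smaller order for $\sigma$.
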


Throughout the following, let $m$ be the positive integer defined in Lemma \ref{3.3}(3).

\begin{remark}\label{3.4}
The character $\phi=\chi^{-1}\hat{a}$ has order $mn$. If $\mathcal D$ is of non-nilpotent type, then $m>1$.
In fact, if $\mathcal D$ is of non-nilpotent type and $m=1$, then ${\rm ord}(a)=n$ by Lemma \ref{3.3}(3), and hence $x^n=\a(a^n-1)=0$, a contradiction.
\end{remark}

Now we describe the simple modules $L(\l)$ more explicitely in terms of their dimensions. For any $1\<l\<n-1$, let
$$I_l=\{\l\in\widetilde{\L}|\l(a\chi^{-1})=\rho^{l-1}\}, \   I_n:=\L\backslash\widetilde{\L}=\{\l\in\L|\l\notin\widetilde{\L}\}, \mathrm{and}$$
$$I'_{n}=\{\l\in I_n|\l(a\chi^{-1})\neq \rho^k,  \ 0\<k\<n-1\}, \ I''_{n}=\{\l\in I_n|\l(a\chi^{-1})=\rho^{n-1}\}.$$ Then $I_n=I'_n\cup I''_n$, $\cup_{l=1}^{n-1} I_l=\widetilde{\L}$ and $\cup_{l=1}^{n} I_l={\L}$.
For any $1\<l\<n$ and $\l\in I_l$, denote by $V(l,\l)$ the $l$-dimensional simple $D(H_{\mathcal D})$-module $L(\l)$.

\begin{corollary}\label{3.5}
Let $1\<l\<n-1$ and $\l\in I_l$. Then $\s(\l), \s^{-1}(\l)\in I_{n-l}$. Moreover,
$\s^{-1}(\l)=\l\phi^{d(\l)+1-n}=\l\phi^{l-n}$.
\end{corollary}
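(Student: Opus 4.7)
The plan is to reduce everything to the definition of $I_l$ and the formula $\sigma(\l) = \l\phi^{d(\l)+1}$, using Lemma~\ref{3.3} to handle $\sigma^{-1}$.

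First I would unpack what $\l \in I_l$ means. By definition of $I_l$ we have $\l(a\chi^{-1}) = \rho^{l-1}$, and since $1 \le l \le n-1$ forces $\l \in \widetilde{\Lambda}$, the integer $d(\l)$ is defined and equals $l-1$. Plugging this into $\sigma(\l) = \l \phi^{d(\l)+1}$ gives $\sigma(\l) = \l \phi^{l}$. To see that $\sigma(\l) \in I_{n-l}$, I would invoke Lemma~\ref{3.3}(1): $d(\sigma(\l)) = n - d(\l) - 2 = n - l - 1$, which by the definition of $I_{n-l}$ (an element $\mu$ of $\widetilde{\Lambda}$ with $\mu(a\chi^{-1}) = \rho^{(n-l)-1}$) is equivalent to $\sigma(\l) \in I_{n-l}$.

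For the inverse, I would argue as follows. By Lemma~\ref{3.3}(2), $\sigma$ restricts to a bijection of $\widetilde{\Lambda}$, and by Lemma~\ref{3.3}(3) it has order $2m$, so $\sigma^{-1}$ makes sense. Set $\mu := \sigma^{-1}(\l)$, which lies in $\widetilde{\Lambda}$. Since $\sigma$ maps $I_{l'}$ into $I_{n-l'}$ for every admissible $l'$ (by the computation just performed), and $\sigma(\mu) = \l \in I_l$, we must have $\mu \in I_{n-l}$. Consequently $d(\mu) = n-l-1$, and the defining formula gives
\[
\l = \sigma(\mu) = \mu\, \phi^{d(\mu)+1} = \mu\, \phi^{\,n-l}.
\]
Solving for $\mu$ yields $\sigma^{-1}(\l) = \l\, \phi^{\,l-n}$, and rewriting the exponent as $l-n = (l-1) + 1 - n = d(\l) + 1 - n$ gives the stated equality $\sigma^{-1}(\l) = \l\,\phi^{d(\l)+1-n}$.

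There is no real obstacle here: once the definitions of $d$, $\phi$, $I_l$ and $\sigma$ are lined up, the corollary is a bookkeeping exercise on exponents of $\phi$ combined with Lemma~\ref{3.3}(1)--(3). The only subtlety worth mentioning explicitly is the justification that $\sigma^{-1}(\l) \in \widetilde{\Lambda}$ so that $d$ is defined on it; this is immediate from $\sigma(\widetilde{\Lambda}) = \widetilde{\Lambda}$.
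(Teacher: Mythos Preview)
Your proof is correct and is precisely the ``straightforward verification'' from Lemma~\ref{3.3} that the paper invokes without details. There is nothing to add.
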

\begin{proof}
It follows from Lemma \ref{3.3} and a straightforward verification.
\end{proof}

In the sequel, we describe  $\Bbbk$-bases of simple modules $V(l,\l)$.  For the convenience in classifying the non-simple indecomposable $D(H_{\mathcal D})$, we will need two types of bases for each simple $D(H_{\mathcal D})$-module $V(l,\l)$.
For any $1\<l\<n$ and $\l\in I_l$, define
$$\a_i(l,\l):=(i)_{\rho}(\l(\chi)-\l(a)\rho^{1-i})\in\Bbbk, \ i\>1.$$
Since $\s^2(\l)=\l\phi^n$, we have $\a_i(l,\tau(\l))=\a_i(l,\l)$.  We define $\b(l, \l)\in\Bbbk$ by
$\b(1, \l):=1$ if $l=1$ and $\b(l, \l):=\a_1(l,\l)\a_2(l,\l)\cdots\a_{l-1}(l,\l)$ if $1<l\<n$.
Note that $\a_i(l,\l)\neq 0$ for all $1\<i\<l-1$, and hence $\b(l,\l)\neq 0$.

Let $1\<l\<n$ and $\l\in I_l$. Define $v_i, m_i\in V(l,\l)$, $0\<i\<l-1$, as follows:
$$v_i:=x^i1_{\l};   \  \  m_i:=\left\{\begin{array}{ll}
\a_{i+1}(l,\l)\a_{i+2}(l,\l)\cdots\a_{l-1}(l,\l)x^i1_{\l},& \text{ if } 0\<i\<l-2,\\
x^{l-1}1_{\l},& \text{ if } i=l-1.\\
\end{array}\right.$$
Then both $\{v_i|0\<i\<l-1\}$ and $\{m_i|0\<i\<l-1\}$ form  $\Bbbk$-bases of $V(l,\l)$. The former is called the {\it natural basis} of $V(l,\l)$ and the latter is called the {\it standard basis} of $V(l,\l)$.

 We now present the $D(H_{\mathcal D})$-module actions on both bases.
For $1\<l\<n-1$,  the $D(H_{\mathcal D})$-module action on the natural basis of $V(l,\l)$ is determined by
\begin{equation*}
\begin{array}{ll}
\vspace{0.2cm}
(g\g)v_i=(\phi^i\l)(g\g)v_i, & g\in G, \ \g\in\G, \ 0\<i\<l-1,\\
xv_i=\left\{
\begin{array}{ll}
v_{i+1},& 0\<i\<l-2,\\
0,& i=l-1,\\
\end{array}\right.&
\xi v_i=\left\{
\begin{array}{ll}
0, & i=0,\\
\a_i(l,\l)v_{i-1},& 1\<i\<l-1, \\
\end{array}\right.\\
\end{array}
\end{equation*}
while the $D(H_{\mathcal D})$-module action on the standard basis of  $V(l,\l)$ is  given by
\begin{equation*}
\begin{array}{ll}
\vspace{0.2cm}
(g\g)m_i=(\phi^i\l)(g\g)m_i, & g\in G, \ \g\in\G, \ 0\<i\<l-1,\\
xm_i=\left\{
\begin{array}{ll}
\a_{i+1}(l,\l)m_{i+1},& 0\<i\<l-2,\\
0,& i=l-1,\\
\end{array}\right.&
\xi m_i=\left\{
\begin{array}{ll}
0, & i=0,\\
m_{i-1},& 1\<i\<l-1.\\
\end{array}\right.\\
\end{array}
\end{equation*}
It is clear that  $V(l,\l)^x=\Bbbk v_{l-1}=\Bbbk m_{l-1}$ and $V(l,\l)^{\xi}=\Bbbk v_0=\Bbbk m_0$.

Next we consider the case that $l=n$ with $\l\in I_n$. In this situation, $V(n,\l)$ is projective. To describe the $D(H_{\mathcal D})$-module structure on  $V(n,\l)$,  it is necessary to distinguish between two cases for  $\mathcal{D}$:   the nilpotent case and the non-nilpotent case.

If $\mathcal{D}$ is of {\bf nilpotent type},  the $D(H_{\mathcal D})$-module action on the natural basis of  $V(n,\l)$ is given by
\begin{equation*}
	\begin{array}{ll}
		\vspace{0.2cm}
		(g\g)v_i=(\phi^i\l)(g\g)v_i, & g\in G, \ \g\in\G, \ 0\<i\<n-1,\\
		xv_i=\left\{
		\begin{array}{ll}
			v_{i+1},& 0\<i\<n-2,\\
			0,& i=n-1,\\
		\end{array}\right.&
		\xi v_i=\left\{
		\begin{array}{ll}
			0, & i=0,\\
			\a_i(l,\l)v_{i-1},& 1\<i\<n-1, \\
		\end{array}\right.\\
	\end{array}
\end{equation*}
while the $D(H_{\mathcal D})$-module action on the standard basis of $V(n, \l)$ is  given by
\begin{equation*}
\begin{array}{ll}
\vspace{0.2cm}
(g\g)m_i=(\phi^i\l)(g\g)m_i, &g\in G, \ \g\in\G, \ 0\<i\<n-1,\\
xm_i=\left\{
\begin{array}{ll}
\a_{i+1}(n,\l)m_{i+1},& 0\<i\<n-2,\\
0,& i=n-1,\\
\end{array}\right.&
\xi m_i=\left\{
\begin{array}{ll}
0, & i=0,\\
m_{i-1},& 1\<i\<n-1.\\
\end{array}\right.\\
\end{array}
\end{equation*}
It is clear that  $V(n,\l)^x=\Bbbk v_{n-1}=\Bbbk m_{n-1}$ and $V(n,\l)^{\xi}=\Bbbk v_0=\Bbbk m_0$.

If  $\mathcal{D}$ is of {\bf non-nilpotent type}, then the $D(H_{\mathcal D})$-module actions on the two bases of $V(n,\l)$ are given  respectively by
\begin{equation*}
	\begin{array}{ll}
		\vspace{0.2cm}
		(g\g)v_i=(\phi^i\l)(g\g)v_i, & g\in G, \ \g\in\G, \ 0\<i\<n-1,\\
		xv_i=\left\{
		\begin{array}{ll}
			v_{i+1},& 0\<i\<n-2,\\
			(\l^n(a)-1)v_0,& i=n-1,\\
		\end{array}\right.&
		\xi v_i=\left\{
		\begin{array}{ll}
			0, & i=0,\\
			\a_i(l,\l)v_{i-1},& 1\<i\<n-1.\\
		\end{array}\right.\\
	\end{array}
\end{equation*}
and
\begin{equation*}
\begin{array}{ll}
\vspace{0.2cm}
(g\g)m_i=(\phi^i\l)(g\g)m_i, &g\in G, \ \g\in\G, \ 0\<i\<n-1,\\
xm_i=\left\{
\begin{array}{ll}
\a_{i+1}(n,\l)m_{i+1},& 0\<k\<n-2,\\
\frac{\l^n(a)-1}{\b(n, \l)}m_0,& i=n-1,\\
\end{array}\right.&
\xi m_i=\left\{
\begin{array}{ll}
0, & i=0,\\
m_{i-1},& 1\<i\<n-1.\\
\end{array}\right.\\
\end{array}
\end{equation*}
Clearly, $V(n,\l)^{\xi}=\Bbbk v_0=\Bbbk m_0$.

By the above discussion, we have following explicit description of  the simple $D(H_{\mathcal D})$-modules.

\begin{proposition}\label{3.6}
The following set $$\{V(l,\l)|1\<l\<n,\l\in I_l\}$$ gives a complete set of  representatives of isomorphism classes of simple $D(H_{\mathcal D})$-modules. Moreover, $V(n,\l)$ is a projective $D(H_{\mathcal D})$-module for any $\l \in  I_n$.
\end{proposition}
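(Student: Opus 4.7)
The proposition has two assertions: the classification of simple modules and the projectivity of $V(n,\l)$ for $\l\in I_n$. The first is essentially a reorganization of what has already been established. The sets $I_1,\ldots,I_n$ partition $\L$ by the value of $\l(a\chi^{-1})$, so each character $\l$ belongs to a unique $I_l$. For $\l\in I_l$ with $1\<l\<n-1$, Proposition~\ref{3.1}(2) identifies the unique proper submodule of $Z(\l)$ as $\langle x^l 1_\l\rangle$, hence $L(\l)=V(l,\l)$ has basis $\{x^i 1_\l:0\<i\<l-1\}$ and dimension $l$. For $\l\in I_n$, Proposition~\ref{3.1}(1) yields $V(n,\l)=Z(\l)$ simple of dimension $n$. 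Combining this with the bijection $\l\mapsto L(\l)$ of Proposition~\ref{3.2}(1) gives the claimed complete irredundant list.

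For the projectivity of $V(n,\l)$ with $\l\in I_n$, the plan is to verify ${\rm Ext}^1_{D(H_{\mathcal D})}(V(n,\l),S)=0$ for every simple $S=V(l',\l')$. Since $\widehat{H^{*\rm cop}_{\mathcal D}}$ is a finite-dimensional Hopf subalgebra of $D(H_{\mathcal D})$, Nichols--Zoeller makes $D(H_{\mathcal D})$ free over it; induction is then exact and sends projectives to projectives, so Frobenius reciprocity upgrades to
$$
{\rm Ext}^1_{D(H_{\mathcal D})}\bigl(Z(\l),V(l',\l')\bigr)\;\cong\;{\rm Ext}^1_{\widehat{H^{*\rm cop}_{\mathcal D}}}\bigl(\Bbbk_\l,V(l',\l')\bigr).
$$
Since $\Bbbk(G\ti\G)$ is semisimple, any non-split cocycle on the right takes the form $\xi v_1=c\,m_i\neq 0$ with $c\,m_i$ lying in the $\phi^{-1}\l$-weight space of $V(l',\l')$; matching weights $\phi^{-1}\l=\phi^i\l'$ forces $\l=\phi^j\l'$ with $j=i+1\in\{1,\ldots,l'\}$.

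The main obstacle is ruling out such cocycles for $\l\in I_n$, and this rests on the identity $\phi(a\chi^{-1})=\rho^{-2}$. For $1\<j\<l'-1$ (with $l'\<n$), the weight $\l=\phi^j\l'$ reappears as that of $m_j\in V(l',\l')$, so replacing $v_1$ by $v_1+c\,m_j$ kills the cocycle and splits the extension. For $j=l'\<n-1$ no such absorption is available, but $\l'(a\chi^{-1})=\rho^{l'-1}$ combined with $\phi(a\chi^{-1})=\rho^{-2}$ yields $\l(a\chi^{-1})=\rho^{-l'-1}\in\{1,\rho,\ldots,\rho^{n-2}\}$, contradicting $\l\in I_n$. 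For $l'=n$ and $1\<j\<n-1$, absorption at $m_j$ again splits the extension, while for $j=n$ the relation $\xi^n=0$ becomes $c\,m_0=0$, forcing $c=0$. Thus ${\rm Ext}^1_{D(H_{\mathcal D})}(V(n,\l),S)=0$ for every simple $S$, and $V(n,\l)$ is projective.
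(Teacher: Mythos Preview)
Your argument is correct. The paper itself does not give an independent proof of this proposition: both assertions are presented as consequences of the results recalled from \cite{KR} (Propositions~\ref{3.1}, \ref{3.2}, and later \ref{3.8} with Corollary~\ref{3.9}(1)), so there is no detailed ``paper's proof'' to compare against beyond the implicit reliance on the block decomposition in \cite{KR} (cf.\ Proposition~\ref{3.15}, where each $V(n,\l)$ lands in a matrix block $M_n(\Bbbk)$).

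Your route to projectivity is genuinely different and more self-contained: you use freeness of $D(H_{\mathcal D})$ over $\widehat{H^{*\rm cop}_{\mathcal D}}$ to reduce, via Shapiro's lemma, to an explicit cocycle computation over the smaller algebra, and then kill all potential extensions either by weight absorption or by the relation $\xi^n=0$. The key arithmetic step---that $\phi(a\chi^{-1})=\rho^{-2}$ forces $\l(a\chi^{-1})=\rho^{-l'-1}\in\{1,\rho,\dots,\rho^{n-2}\}$ in the unabsorbable case $j=l'\<n-1$---is exactly what rules out nontrivial extensions for $\l\in I_n$, and it is correctly handled. One small remark: invoking Nichols--Zoeller is heavier than necessary, since freeness of $D(H_{\mathcal D})$ over $\widehat{H^{*\rm cop}_{\mathcal D}}$ with basis $\{1,x,\dots,x^{n-1}\}$ can be read off directly from the PBW-type basis and the commutation relations in Proposition~\ref{2.1}. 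Your approach buys independence from \cite{KR}; the paper's approach, once the block structure is in hand, makes projectivity immediate without any ${\rm Ext}$ computation.
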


\begin{corollary}\label{3.7}
Let $M$ be a finite dimensional semisimple $D(H_{\mathcal D})$-module. Then $l(M)={\rm dim}(M^{\xi})$. Furthermore, if $\mathcal D$ is of nilpotent type, then $l(M)={\rm dim}(M^x)$.
\end{corollary}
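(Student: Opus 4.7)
The claim is essentially a direct bookkeeping consequence of the explicit bases for the simple modules $V(l,\l)$ that were just recorded, so the proof splits cleanly into three short moves.

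First, I would use semisimplicity to write $M=\bigoplus_{i=1}^{r}V(l_i,\l_i)$ for some simples $V(l_i,\l_i)$ in the list of \prref{3.6}; then $l(M)=r$, and since $\xi$ and $x$ act $\Bbbk$-linearly, the spaces
\[
M^{\xi}=\bigoplus_{i=1}^{r}V(l_i,\l_i)^{\xi},\qquad M^{x}=\bigoplus_{i=1}^{r}V(l_i,\l_i)^{x}
\]
decompose along the direct sum. So the entire statement reduces to computing $\dim V(l,\l)^{\xi}$ and $\dim V(l,\l)^{x}$ for each simple.

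For $M^{\xi}$ the computation is uniform. In every displayed action above (whether $1\leqslant l\leqslant n-1$, or $l=n$ with $\mathcal D$ nilpotent, or $l=n$ with $\mathcal D$ non-nilpotent), $\xi$ sends $v_i\mapsto \a_i(l,\l)v_{i-1}$ for $i\geqslant 1$ and kills $v_0$, and each $\a_i(l,\l)$ with $1\leqslant i\leqslant l-1$ is nonzero. Hence the matrix of $\xi$ on $V(l,\l)$ is a single Jordan block of size $l$ with eigenvalue $0$, so $\dim V(l,\l)^{\xi}=1$ for every simple. Summing gives $\dim M^{\xi}=r=l(M)$.

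For $M^{x}$ in the nilpotent case I would argue similarly: for $1\leqslant l\leqslant n-1$ the action shows $xv_i=v_{i+1}$ ($i<l-1$) and $xv_{l-1}=0$, while for $l=n$ in the nilpotent case the same pattern persists (the top arrow $xv_{n-1}$ equals $0$, not $(\l^n(a)-1)v_0$, because $\mathcal D$ is nilpotent). Thus $x$ is again a single nilpotent Jordan block, $\dim V(l,\l)^{x}=1$, and summing yields $\dim M^{x}=r=l(M)$. The only point worth flagging, which is also the reason the second assertion is restricted to the nilpotent type, is that in the non-nilpotent case $xv_{n-1}=(\l^n(a)-1)v_0$ need not vanish, so $V(n,\l)^x$ can be $0$; one would just note this to explain why the hypothesis is necessary, but it plays no role in proving the stated claim.

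There is no real obstacle here: the argument is purely a readout from the module-action tables above, and the main thing to be careful about is simply to invoke the correct table in each of the three cases (generic $l$, nilpotent $l=n$, non-nilpotent $l=n$) when verifying $\dim V(l,\l)^{\xi}=1$.
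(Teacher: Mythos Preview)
Your argument is correct and is precisely the reasoning the paper leaves implicit: Corollary~\ref{3.7} is stated without proof, as an immediate consequence of the explicit $\xi$- and $x$-actions on the simples $V(l,\l)$ displayed just before it, together with the observation $\a_i(l,\l)\neq 0$ for $1\leqslant i\leqslant l-1$. There is nothing to add.
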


\subsection{Indecomposable projective modules}\label{s3.2}

Krop and Radford described all projective indecomposable $D(H_{\mathcal D})$-modules for any group datum $\mathcal{D}=(G,\chi,a,\mu)$ with $G$ abelian in \cite[Subsection 2.3]{KR}, including their radical series and composition factors.

Since $D(H_{\mathcal D})$ is a symmetric algebra, we have $P(V)\cong I(V)$ and
$$P(V)/{\rm rad}(P(V))\cong{\rm soc}(P(V))\cong V$$
 for any simple $D(H_{\mathcal D})$-module $V$.
Let $J$ denote the Jacobson radical of $D(H_{\mathcal D})$, and let $P(\l)$ be the projective cover of $L(\l)$. Denote by $J^i(\l)$, $i=0,1,\cdots,$ the terms of radical series of $P(\l)$. Then, by \cite[Subsection 2.3]{KR}, we have the following proposition and corollary.

\begin{proposition}\cite[Theorem 3]{KR}\label{3.8}
Let $\l\in \widetilde{\Lambda}$. The radical series of $P(\l)$ is given by
$$P(\l)\supset J(\l)\supset L(\l)\supset 0$$
with  $$J(\l)/L(\l)\cong L(\sigma(\l))\oplus  L(\sigma^{-1}(\l)).$$
\end{proposition}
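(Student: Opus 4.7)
The strategy is to realise $P(\lambda)$ as an extension of two length-two Verma-type modules and read off the radical filtration from the resulting short exact sequence. The first step is to analyse $Z(\lambda)$ itself. Write $d(\lambda)=l-1$, so $\lambda\in I_l$. By Proposition \ref{3.1}, $Z(\lambda)$ has a unique proper submodule $N$, generated by $v:=x^l\cdot 1_\lambda$. A direct computation using $[x,\xi]=a-\chi$ together with $\lambda(a\chi^{-1})=\rho^{l-1}$ shows $\xi v=0$, and the $(G\times\Gamma)$-eigenvalue of $v$ is $\lambda\phi^l=\sigma(\lambda)$. Combining this with Lemma \ref{3.3}(1), which gives $\sigma(\lambda)\in I_{n-l}$ and hence $\dim L(\sigma(\lambda))=n-l=\dim N$, yields $N\cong L(\sigma(\lambda))$. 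So $Z(\lambda)$ has length two with head $L(\lambda)$ and socle $L(\sigma(\lambda))$.

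Next, since $D(H_{\mathcal{D}})$ is symmetric, $P(\lambda)\cong I(\lambda)$, whence $\mathrm{soc}(P(\lambda))=L(\lambda)$. Applying the previous paragraph to $\sigma^{-1}(\lambda)\in\widetilde{\Lambda}$ produces a length-two Verma $Z(\sigma^{-1}(\lambda))$ with head $L(\sigma^{-1}(\lambda))$ and socle $L(\sigma(\sigma^{-1}(\lambda)))=L(\lambda)$. The embedding $L(\lambda)\hookrightarrow Z(\sigma^{-1}(\lambda))$ of socles lifts, by injectivity of $P(\lambda)=I(\lambda)$, to a map $\iota\colon Z(\sigma^{-1}(\lambda))\to P(\lambda)$, which is injective because its restriction to the socle is. Since $Z(\lambda)$ is cyclic with head $L(\lambda)$ there is also a surjection $\pi\colon P(\lambda)\twoheadrightarrow Z(\lambda)$, and a dimension count --- using $\sum_\mu\dim L(\mu)\dim P(\mu)=\dim D(H_{\mathcal{D}})=|G|^2 n^2$ together with the counts of Proposition \ref{3.2} to pin down $\dim P(\lambda)=2n$ --- forces $\mathrm{Im}(\iota)=\mathrm{Ker}(\pi)$, yielding the short exact sequence
$$0\to Z(\sigma^{-1}(\lambda))\to P(\lambda)\to Z(\lambda)\to 0.$$

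Reading off composition factors, $P(\lambda)$ has factors $L(\lambda)$, $L(\sigma(\lambda))$, $L(\sigma^{-1}(\lambda))$, $L(\lambda)$ with the unique copies of $L(\lambda)$ as top and socle, so $J(\lambda)/L(\lambda)$ has exactly the two composition factors $L(\sigma(\lambda))$ and $L(\sigma^{-1}(\lambda))$. The remaining --- and main --- obstacle is to show that this quotient is \emph{semisimple}, equivalently that the Loewy length of $P(\lambda)$ is three rather than four. One summand is immediate: the composition $J(\lambda)\hookrightarrow P(\lambda)\xrightarrow{\pi}Z(\lambda)$ has image $\mathrm{rad}(Z(\lambda))=L(\sigma(\lambda))$, which therefore appears in $J(\lambda)/\mathrm{rad}(J(\lambda))$. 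For the other summand, I would invoke the self-duality of $D(H_{\mathcal{D}})\text{-mod}$ coming from the symmetric algebra structure combined with the Hopf antipode (under which the $\mathrm{rad}^i$ and $\mathrm{soc}^i$ filtrations interchange): applying it to the sequence above produces a dual surjection that exhibits $L(\sigma^{-1}(\lambda))$ in the head of $J(\lambda)$ as well. A final match of composition factors then gives $J(\lambda)/L(\lambda)\cong L(\sigma(\lambda))\oplus L(\sigma^{-1}(\lambda))$ and $\mathrm{rad}^2(P(\lambda))=L(\lambda)$, completing the proof.
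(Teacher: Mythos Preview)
The paper does not prove this proposition; it is quoted verbatim from \cite[Theorem 3]{KR} and used as input for the rest of Section~\ref{s3}. (The explicit modules $P(l,\lambda)$ constructed afterwards do verify the Loewy structure by direct computation, but the proof of Proposition~\ref{3.10} invokes Corollary~\ref{3.9}(2), hence this proposition, at the final step.) So there is no ``paper's proof'' to compare against, and your plan is an independent argument.

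Your outline is sound in the $m>1$ case but has two genuine gaps. First, the global identity $\sum_\mu \dim L(\mu)\dim P(\mu)=|G|^2n^2$ only determines each $\dim P(\lambda)$ once you have the lower bound $\dim P(\lambda)\geqslant 2n$ for every $\lambda\in\widetilde{\Lambda}$. That lower bound follows from $\mathrm{Im}(\iota)\subseteq\mathrm{Ker}(\pi)$, which you have not established: when $m>1$ it drops out because $L(\sigma^{-1}(\lambda))$ is not a composition factor of $Z(\lambda)$, but when $m=1$ one has $\sigma^{-1}(\lambda)=\sigma(\lambda)$ and the composite $\pi\circ\iota$ could a priori land in $\mathrm{soc}\,Z(\lambda)=L(\sigma(\lambda))$. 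You need a separate argument there (for instance, a direct check that $\mathrm{Hom}(Z(\sigma(\lambda)),Z(\lambda))=0$, or an appeal to the $Z$-filtration of projectives coming from freeness of $D(H_{\mathcal D})$ over $\widehat{H^{*\rm cop}_{\mathcal D}}$).

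Second, your semisimplicity step is incomplete in the $m=1$ case. When $m>1$, the embedding $\iota$ exhibits $L(\sigma^{-1}(\lambda))$ in $\mathrm{soc}(J(\lambda)/L(\lambda))$ and the surjection $\pi$ exhibits $L(\sigma(\lambda))$ in its head; since these are distinct simples and the length is two, semisimplicity follows without any duality. When $m=1$, however, both factors are $L(\sigma(\lambda))$ and your $\pi$/$\iota$ observations are consistent with a nonsplit self-extension. The duality you invoke swaps radical and socle layers, but a uniserial module of length four with layers $L(\lambda),L(\sigma(\lambda)),L(\sigma(\lambda)),L(\lambda)$ is symmetric under that swap, so duality alone does not rule it out. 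You need to show $\mathrm{Ext}^1(L(\sigma(\lambda)),L(\sigma(\lambda)))=0$ directly, e.g.\ by checking that no highest-weight vector of weight $\sigma(\lambda)$ can occur in $\mathrm{rad}\,P(\sigma(\lambda))/\mathrm{rad}^2$, or by the explicit construction the paper carries out in Proposition~\ref{3.10}.
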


\begin{corollary}\label{3.9}
Let $\l\in\L$. Then:
\begin{enumerate}
\item[(1)] If $\l\in I_n$, the simple module $Z(\l)$ is projective.
\item[(2)] If $\l\in\widetilde{\L}$, the radical length of $P(\l)$ is  ${\rm rl}(P(\l))=3$ and its length is $l(P(\l))=4$.
\item[(3)]   The radical length of $D(H_{\mathcal D})$ is   ${\rm rl}(D(H_{\mathcal D}))=3$; in particular,  $J^3=0$.
\end{enumerate}
\end{corollary}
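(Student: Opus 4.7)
The corollary essentially repackages Propositions \ref{3.1}, \ref{3.6}, and \ref{3.8}, so my plan is to handle the three parts in turn, each as a short direct deduction.

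For part (1), I will note that $\l\in I_n$ means by definition $\l(a\chi^{-1})\notin\{1,\rho,\dots,\rho^{n-2}\}$, so Proposition \ref{3.1}(1) forces $Z(\l)$ to be simple; hence $Z(\l)=L(\l)=V(n,\l)$, and Proposition \ref{3.6} then supplies the projectivity.

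For part (2), I will read the data straight off the filtration $P(\l)\supset J(\l)\supset L(\l)\supset 0$ of Proposition \ref{3.8}. The top $P(\l)/J(\l)\cong L(\l)$ and the socle $L(\l)$ are simple, while $J(\l)/L(\l)\cong L(\s(\l))\oplus L(\s^{-1}(\l))$ is semisimple of length two, giving $l(P(\l))=1+2+1=4$. For the radical length, $J$ annihilates the simple socle, so $J^3P(\l)=J\cdot L(\l)=0$ while $J^2P(\l)=L(\l)\neq 0$, whence ${\rm rl}(P(\l))=3$.

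For part (3), I will use the decomposition of the regular module $D(H_{\mathcal D})\cong\bigoplus_{\l\in\L}\dim L(\l)\cdot P(\l)$, which yields ${\rm rl}(D(H_{\mathcal D}))=\max_{\l\in\L}{\rm rl}(P(\l))$. Parts (1) and (2) pin this maximum to either $1$ (realized on $I_n$) or $3$ (realized on $\widetilde{\L}$), so the only point requiring thought is the nonemptiness of $\widetilde{\L}$: the trivial character $\varepsilon$ satisfies $\varepsilon(a\chi^{-1})=1=\rho^0$, so $\varepsilon\in I_1\subset\widetilde{\L}$. Thus ${\rm rl}(D(H_{\mathcal D}))=3$, equivalently $J^3=0$. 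The nonemptiness check in part (3) is the only place in the argument where the conclusion could plausibly fail; every other step is an immediate consequence of the preceding propositions, so I expect no genuine obstacle in the write-up.
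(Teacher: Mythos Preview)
Your proposal is correct. The paper does not give a separate proof of this corollary; it simply states that both Proposition~\ref{3.8} and Corollary~\ref{3.9} follow from \cite[Subsection 2.3]{KR}. Your argument spells out exactly the deductions one would make from Propositions~\ref{3.1}, \ref{3.6}, and~\ref{3.8}, and the one point you single out as needing care---the nonemptiness of $\widetilde{\L}$ via $\varepsilon\in I_1$---is sound (it tacitly uses $n\geqslant 2$, which holds because $H_{\mathcal D}$ has rank one).
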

For a more explicit description of the structure of indecomposable projective modules, we consider separately the two cases in which $\mathcal{D}$ is nilpotent or non-nilpotent.

{\bf Case 1}: $\mathcal{D}$ is of {\bf nilpotent type}.

Let $1\<l\<n-1$, $\l \in I_l$, and let $P(l,\l)$ be a $2n$-dimensional vector space with a $\Bbbk$-basis $\{v_0, v_1, \cdots, v_{n-1}, u_0, u_1, \cdots, u_{n-1}\}$.  A straightforward verification shows that $P(l,\l)$ becomes a $D(H_{\mathcal D})$-module under the action defined by
\begin{equation*}
\begin{array}{ll}
\vspace{0.2 cm}
(g\g)v_i=(\phi^i\l)(g\g)v_i, & g\in G, \ \g\in\G, \ 0\<i\<n-1,\\
\vspace{0.2 cm}
(g\g)u_i=(\phi^{i-n+l}\l)(g\g)u_i, & g\in G, \ \g\in\G, \ 0\<i\<n-1,\\
\vspace{0.2 cm}
xv_i=\left\{
\begin{array}{ll}
v_{i+1}, & 0\<i\<n-2,\\
0,& i=n-1,\\
\end{array}\right.&
x u_i=\left\{
\begin{array}{ll}
u_{i+1}, & 0\<i\<n-2,\\
0,& i=n-1,\\
\end{array}\right.\\
\end{array}
\end{equation*}
\begin{equation*}
\begin{array}{ll}
\vspace{0.2 cm}
\xi v_i=\left\{
\begin{array}{ll}
u_{n-l-1},& i=0,\\
\a_i(l,\l)v_{i-1}+u_{n-l+i-1},& 1\<i\<l-1,\\
u_{n-1},& i=l,\\
\a_{i-l}(n-l,\s(\l))v_{i-1},& l+1\<i\<n-1,\\
\end{array}\right.&\\
\vspace{0.2 cm}
\xi u_i=\left\{
\begin{array}{ll}
0,& i=0,\\
\a_i(n-l,\s^{-1}(\l))u_{i-1}, & 1\<i\<n-l-1,\\
0,& i=n-l,\\
\a_{i-n+l}(l,\l)u_{i-1}, & n-l+1\<i\<n-1.\\
\end{array}\right.\\
\end{array}
\end{equation*}
The above basis is called the {\it standard basis of $P(l,\l)$}. Clearly, $P(l,\l)^{x}=\Bbbk v_{n-1}+\Bbbk u_{n-1}$ and $P(l,\l)^{\xi}=\Bbbk u_0+\Bbbk u_{n-l}$.

{\bf Case 2}: $\mathcal{D}$ is of {\bf non-nilpotent type}.
Let $1\<l\<n-1$, and  $\l \in I_l$, and consider a $2n$-dimensional vector space  $P(l,\l)$  with a $\Bbbk$-basis $\{v_0, v_1, \cdots, v_{n-1}, u_0, u_1, \cdots, u_{n-1}\}$. It can be easily checked that  $P(l, \l)$ carries a structure of a $D(H_{\mathcal D})$-module with the action defined  by
\begin{equation*}
\begin{array}{ll}
\vspace{0.2 cm}
(g\g)v_i=(\phi^{i-n+l}\l)(g\g)v_i, & g\in G, \ \g\in\G, \ 0\<i\<n-1,\\
(g\g)u_i=(\phi^i\l)(g\g)u_i, & g\in G, \ \g\in\G, \ 0\<i\<n-1,\\
\end{array}
\end{equation*}
\begin{equation*}
\begin{array}{ll}
\vspace{0.2 cm}
x v_i=\left\{
\begin{array}{ll}
\a_{i+1}(n-l,\s^{-1}(\l))v_{i+1},& 0\<i\<n-l-2,\\
u_0,& i=n-l-1,\\
\a_{i+1-n+l}(l,\l)v_{i+1}+u_{i+1-n+l},& n-l\<i\<n-2,\\
y_{l,\l}v_0+u_l,& i=n-1,\\
\end{array}\right.&\\
x u_i=\left\{
\begin{array}{ll}
\a_{i+1}(l,\l)u_{i+1},& 0\<i\<l-2,\\
0,& i=l-1,\\
\a_{i+1-l}(n-l,\s(\l))u_{i+1}, & l\<i\<n-2,\\
z_{l,\l}u_0,& i=n-1,\\
\end{array}\right.\\
\end{array}
\end{equation*}
\begin{equation*}
\begin{array}{ll}
\xi v_i=\left\{
\begin{array}{ll}
0, & i=0,\\
v_{i-1},& 1\<i\<n-1,\\
\end{array}\right.&
\xi u_i=\left\{
\begin{array}{ll}
0, & i=0,\\
u_{i-1},& 1\<i\<n-1,\\
\end{array}\right.\\
\end{array}
\end{equation*}
where $y_{l,\l}=\frac{\rho^{1-l}\l(a)-\rho^l\l(\chi)}{(n-1)!_{\rho}}$ and $z_{l,\l}=\frac{\rho\l(a)-\l(\chi)}{(n-1)!_{\rho}}$. Moreover, $y_{l,\l}+z_{l,\l}=0$ and $z_{n-l,\s^{-1}(\l)}=y_{l,\l}$ by $\l(\chi)=\l(a)\rho^{1-l}$.

Such a basis is referred to as  the {\it  standard basis} of  $P(l,\l)$. Clearly, $P(l,\l)^x=\Bbbk u_{l-1}+\Bbbk(u_{n-1}-z_{l,\l}v_{n-l-1})$ and $P(l,\l)^{\xi}=\Bbbk v_0+\Bbbk u_0$.

\begin{proposition}\label{3.10}
Let $1\<l\<n-1$ and $\l \in I_l$. Then $P(l,\l)\cong P(\l)$.
\end{proposition}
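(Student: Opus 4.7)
The plan is to exhibit $P(l,\l)$ as the injective envelope of $L(\l)=V(l,\l)$, which coincides with $P(\l)$ by the self-injectivity of $D(H_{\mathcal{D}})$. By Proposition \ref{3.8} combined with Corollary \ref{3.5}, the composition factors of $P(\l)$ are $L(\l), L(\s(\l)), L(\s^{-1}(\l)), L(\l)$ of dimensions $l, n-l, n-l, l$, so $\dim P(\l) = 2n = \dim P(l,\l)$; it therefore suffices to produce a $D(H_{\mathcal{D}})$-module embedding $P(l,\l)\hookrightarrow P(\l)$.

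The central step is to compute ${\rm soc}(P(l,\l))$ and identify it with $V(l,\l)=L(\l)$. Consider first the nilpotent case: the explicit $\xi$-action on the standard basis gives $P(l,\l)^{\xi} = \Bbbk u_0 + \Bbbk u_{n-l}$, where $u_0$ has weight $\s^{-1}(\l)$ and $u_{n-l}$ has weight $\l$. The span $\langle u_{n-l}, u_{n-l+1}, \ldots, u_{n-1}\rangle$ is stable under $x$ (since $xu_i = u_{i+1}$ with $xu_{n-1}=0$) and under $\xi$ (since $\xi u_{n-l}=0$ and $\xi u_{n-l+j}=\a_j(l,\l)u_{n-l+j-1}$), and the identification $u_{n-l+i}\leftrightarrow v_i$ with the natural basis of $V(l,\l)$ exhibits it as isomorphic to $V(l,\l)$. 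In contrast, $\langle u_0\rangle$ contains $u_{n-l}=x^{n-l}u_0$ and therefore properly contains $\langle u_{n-l}\rangle$, so $\langle u_0\rangle$ is not simple; as $u_0$ and $u_{n-l}$ lie in distinct weight spaces, no other element of $P(l,\l)^{\xi}$ generates an additional simple submodule. Hence ${\rm soc}(P(l,\l))=\langle u_{n-l}\rangle \cong L(\l)$. In the non-nilpotent case, $P(l,\l)^{\xi}=\Bbbk v_0+\Bbbk u_0$; an analogous check shows $\langle u_0\rangle$ is simple and isomorphic to $V(l,\l)$ via $u_i\leftrightarrow m_i$ (the standard basis), while $\langle v_0\rangle=P(l,\l)$, so again ${\rm soc}(P(l,\l))\cong L(\l)$.

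With the socle identified, the remainder is formal. Since $D(H_{\mathcal{D}})$ is a symmetric (hence self-injective) algebra, the injective envelope $I(L(\l))$ equals $P(\l)$. By injectivity of $P(\l)$, the composite inclusion ${\rm soc}(P(l,\l))\cong L(\l)\hookrightarrow P(\l)$ extends along ${\rm soc}(P(l,\l))\hookrightarrow P(l,\l)$ to a homomorphism $\phi: P(l,\l)\to P(\l)$ that restricts to this inclusion on the socle. Because ${\rm soc}(P(l,\l))$ is simple, every nonzero submodule $N$ of the finite-dimensional module $P(l,\l)$ has ${\rm soc}(N)\subseteq{\rm soc}(P(l,\l))=L(\l)$ nonzero, forcing $L(\l)\subseteq N$; consequently $\ker\phi$ would contain $L(\l)$ if nonzero, which is impossible since $\phi$ is the inclusion on $L(\l)$. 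Hence $\phi$ is injective, and the equality of dimensions from the first paragraph upgrades $\phi$ to an isomorphism $P(l,\l)\cong P(\l)$.

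The main obstacle is the socle computation: one must verify both that the distinguished $\xi$-fixed weight vector generates a simple submodule isomorphic to $V(l,\l)$, which relies on the nonvanishing of $\a_i(l,\l)$ for $1\<i\<l-1$ noted after the definition of $\b(l,\l)$, and that the remaining $\xi$-fixed line contributes nothing further to the socle. Both checks amount to bookkeeping against the explicit action formulas, but must be performed separately in the nilpotent and non-nilpotent cases.
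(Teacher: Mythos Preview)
Your argument is correct and follows the same overall strategy as the paper: identify ${\rm soc}\,P(l,\l)$ with $L(\l)$, embed $P(l,\l)$ into $P(\l)$ via self-injectivity, and conclude by comparing sizes. Two small points are worth noting. First, in the non-nilpotent case your claim $\langle v_0\rangle=P(l,\l)$ is false: from the explicit action one sees $\langle v_0\rangle=\mathrm{span}\{v_0,\ldots,v_{n-l-1},u_0,\ldots,u_{l-1}\}$, an $n$-dimensional submodule. This does not harm the argument, since all you need is that $\langle v_0\rangle$ is not simple, and indeed it properly contains $\langle u_0\rangle$. Second, the paper compares lengths rather than dimensions: it computes the full socle filtration of $P(l,\l)$ to obtain $l(P(l,\l))=4=l(P(\l))$, whereas you read off $\dim P(\l)=2n$ directly from the composition factors in Proposition~\ref{3.8}. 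Your shortcut is cleaner for the present statement, but the paper's explicit socle series is reused later (Lemma~\ref{3.11} cites the proof of Proposition~\ref{3.10}), so the extra work there is not wasted.
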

\begin{proof}
Let $\{v_0, v_1, \cdots, v_{n-1}, u_0, u_1, \cdots, u_{n-1}\}$ be the standard basis of $P(l,\l)$ as stated above.

{\bf Case 1}: $\mathcal{D}$ is of nilpotent type. Let $M:=\langle u_{n-1}\rangle$.
Then it is easy to verify that $M={\rm span}\{u_{n-l},u_{n-l+1},\cdots,u_{n-1}\}$. Using the natural basis of $V(l,\l)$, one  sees that $M\cong V(l,\l)$.
Let $N$ be a simple submodule of $P(l,\l)$.  By Corollary \ref{3.7}, we have ${\rm dim}(N^x)=1$. Choose  $0\neq z\in N^x$.  Since $N^x\subseteq P(l,\l)^x$, we can write
$$z=\b_1v_{n-1}+\b_2u_{n-1}, \ \mathrm{for\, some}\  \b_1, \b_2\in\Bbbk.$$
If $\b_1\neq 0$, then
$$az-\rho^{1-l}\l(a)z=(\rho-\rho^{1-l})\l(a)\b_1v_{n-1}.$$
 Hence $v_{n-1}\in N$,  and  therefore  $\xi^{n-l}v_{n-1}=\b(n-l, \s(\l))u_{n-1}\in N$. Thus, $v_{n-1}, u_{n-1}\in N$, implying ${\rm dim}(N^x)=2$, a contradiction. Therefore,  $\b_1=0$ and $\b_2\neq 0$, so  $u_{n-1}=\b_2^{-1}z\in N$. This shows  $M=\langle u_{n-1}\rangle\subseteq N$,  and since $M$ and $N$ are both simple, we have $N=M$.
 Thus, ${\rm soc}P(l,\l)=M\cong V(l,\l)$. Since $D(H_{\mathcal D})$ is a symmetric algebra, it follows that  $P(l,\l)$ is isomorphic to a submodule of $P(\l)$.

Furthermore,  ${\rm dim}((P(l,\l)/{\rm soc}P(l,\l))^x)=2$, and  thus,  by Corollary \ref{3.7}, \\
$l({\rm soc}(P(l,\l)/{\rm soc}P(l,\l)))\<2$.
Let $\ol{v}$ denote the image of $v\in P(l,\l)$ under the canonical epimorphism $P(l,\l)\ra P(l,\l)/{\rm soc}P(l,\l)$. Then one can check that
$$M_1:={\rm span}\{\ol{v_l},\ol{v_{l+1}},\cdots,\ol{v_{n-1}}\}, \ M_2:={\rm span}\{\ol{u_0},\ol{u_1},\cdots,\ol{u_{n-l-1}}\}$$
 are submodules of $P(l,\l)/{\rm soc}P(l,\l)$. By Corollary \ref{3.5}, we have
 $$M_1\cong V(n-l,\s(\l)),  \mathrm{and}\  M_2\cong V(n-l,\s^{-1}(\l)).$$
  It follows that  ${\rm soc}(P(l,\l)/\mathrm{soc}P(l,\l))=M_1\oplus M_2$, and consequently,
$${\rm soc}^2P(l,\l)={\rm span}\{v_l, \cdots, v_{n-1}, u_0,\cdots,u_{n-1}\}.$$
Now it is straightforward to see that
$$P(l,\l)/{\rm soc}^2P(l,\l)\cong V(l,\l).$$
Therefore, the length of $P(l,\l)$ is  $l(P(l,\l))=4$, and  by Corollary \ref{3.9}(2), $P(l,\l)\cong P(\l)$.

{\bf Case 2}: $\mathcal{D}$ is of non-nilpotent type.  Let
$M:=\langle u_0\rangle$. It is straightforward to verify that $M={\rm span}\{u_0, u_1, \cdots, u_{l-1}\}$. Using the standard basis of $V(l,\l)$, one sees that  $M\cong V(l,\l)$. \\
Let $N$ be a simple submodule of $P(l,\l)$. By Corollary \ref{3.7}, we have  ${\rm dim}(N^{\xi})=1$. Choose $0\neq z\in N^{\xi}$.  Since $N^{\xi}\subseteq P(l,\l)^{\xi}$, we can write $z=\b_1v_0+\b_2u_0$ , for some $\b_1, \b_2\in\Bbbk$. If $\b_1\neq 0$, then
$$az-\l(a)z=(\rho^{n-l}-1)\l(a)\b_1v_0.$$
 Hence $v_0\in N$,  and  therefore  $x^{n-l}v_0=\b(n-l,\s^{-1}(\l))u_0\in N$. Thus, $v_0, u_0\in N$, which implies  ${\rm dim}(N^{\xi})=2$, a contradiction. Therefore,  $\b_1=0$ and $\b_2\neq 0$, and so $u_0=\b_2^{-1}z\in N$. This shows $M=\langle u_0\rangle\subseteq N$,  and  since both $M$ and $N$ are simple, we conclude $N=M$.
Hence, ${\rm soc}P(l,\l)=M\cong V(l,\l)$. Because  $D(H_{\mathcal D})$ is a symmetric algebra, it follows that  $P(l,\l)$ is isomorphic to a submodule of $P(\l)$.
Clearly, $${\rm dim}((P(l,\l)/{\rm soc}P(l,\l))^{\xi})=2.$$
Therefore,  by Corollary \ref{3.7}, we  have
$$l({\rm soc}(P(l,\l)/{\rm soc}P(l,\l)))\<2.$$
Let $\ol{v}$ denote the image of $v\in P(l,\l)$ under the canonical epimorphism $P(l,\l)\ra P(l,\l)/{\rm soc}P(l,\l)$. Then one can  verify  that
$$M_1:={\rm span}\{\ol{v_0},\ol{v_1},\cdots,\ol{v_{n-l-1}}\}, \  M_2:={\rm span}\{\ol{u_l},\ol{u_{l+1}},\cdots,\ol{u_{n-1}}\}$$
 are submodules of $P(l,\l)/{\rm soc}P(l,\l)$,  satisfying
 $$M_1\cong V(n-l,\s^{-1}(\l))\ \mathrm{and}\  M_2\cong V(n-l,\s(\l)).$$
It follows that
$${\rm soc}(P(l,\l)/{\rm soc}P(l,\l))=M_1\oplus M_2,$$
 and consequently,
$${\rm soc}^2P(l,\l)={\rm span}\{v_0, v_1, \cdots, v_{n-l-1}, u_0, u_1, \cdots, u_{n-1}\}.$$
Now one can easily check that $P(l,\l)/{\rm soc}^2P(l,\l)\cong V(l,\l)$.
Therefore, the length of $P(l,\l)$ is $l(P(l,\l))=4$, and by Corollary \ref{3.9}(2), $P(l,\l)\cong P(\l)$.
\end{proof}

\begin{lemma}\label{3.11}
Let $1\<l\<n-1$ and $\l\in I_l$. Then:
\begin{enumerate}
\item[(1)] {\rm soc}$P(l,\l)={\rm rad}^2P(l,\l)\cong V(l,\l)$ and ${\rm soc}^2 P(l,\l)={\rm rad} P(l,\l)$.
\item[(2)] ${\rm soc}^2 P(l,\l)/{\rm soc} P(l,\l)=V(n-l,\s(\l)) \oplus V(n-l,\s^{-1}(\l))$.
\end{enumerate}
\end{lemma}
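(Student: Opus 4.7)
The plan is to read both parts off from data already assembled in Propositions \ref{3.8} and \ref{3.10}, exploiting the fact that $D(H_{\mathcal D})$ is a symmetric (hence self-injective) algebra. By Proposition \ref{3.10}, $P(l,\l)\cong P(\l)$, so Proposition \ref{3.8} applies and yields the radical filtration $P(l,\l)\supset \mathrm{rad}\,P(l,\l)\supset \mathrm{rad}^2 P(l,\l)\supset 0$, with $\mathrm{rad}^2 P(l,\l)\cong L(\l)\cong V(l,\l)$ simple and $\mathrm{rad}\,P(l,\l)/\mathrm{rad}^2 P(l,\l)\cong L(\s(\l))\oplus L(\s^{-1}(\l))$.

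For (1), I would argue as follows. Since $D(H_{\mathcal D})$ is symmetric, each indecomposable projective $P(l,\l)$ is the injective envelope of its simple top $V(l,\l)$, so $\mathrm{soc}\,P(l,\l)\cong V(l,\l)$ is simple. But $\mathrm{rad}^2 P(l,\l)\cong V(l,\l)$ is also a simple submodule, and since the socle contains every simple submodule, the two simples must coincide. For the equality $\mathrm{soc}^2 P(l,\l)=\mathrm{rad}\,P(l,\l)$, I would appeal directly to the explicit standard-basis computation carried out inside the proof of Proposition \ref{3.10}: there a concrete $(2n-l)$-dimensional submodule was displayed and shown both to equal $\mathrm{soc}^2 P(l,\l)$ and to have a simple quotient $P(l,\l)/\mathrm{soc}^2 P(l,\l)\cong V(l,\l)$. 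A proper submodule with simple quotient is a maximal submodule, and $P(l,\l)$ has a unique maximal submodule, namely $\mathrm{rad}\,P(l,\l)$, which forces $\mathrm{soc}^2 P(l,\l)=\mathrm{rad}\,P(l,\l)$.

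Part (2) is then immediate. By (1),
$$\mathrm{soc}^2 P(l,\l)/\mathrm{soc}\,P(l,\l)=\mathrm{rad}\,P(l,\l)/\mathrm{rad}^2 P(l,\l)\cong L(\s(\l))\oplus L(\s^{-1}(\l)),$$
and by Corollary \ref{3.5} we have $\s(\l),\s^{-1}(\l)\in I_{n-l}$, so $L(\s^{\pm 1}(\l))=V(n-l,\s^{\pm 1}(\l))$, which gives the displayed decomposition.

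I do not anticipate a real obstacle: the lemma is essentially a repackaging of the socle filtration data already produced during the proof of Proposition \ref{3.10} against the radical filtration provided by Proposition \ref{3.8}, with the symmetric-algebra hypothesis forcing the two filtrations of length three to coincide. The only point requiring care is to treat the nilpotent and non-nilpotent cases uniformly when quoting the explicit submodules $M$, $M_1$, $M_2$ from the proof of Proposition \ref{3.10}; in both cases they provide $\mathrm{soc}\,P(l,\l)$, $\mathrm{soc}^2 P(l,\l)/\mathrm{soc}\,P(l,\l)$ and the simple top $P(l,\l)/\mathrm{soc}^2 P(l,\l)\cong V(l,\l)$ with the same isomorphism types.
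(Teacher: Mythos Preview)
Your proposal is correct and essentially matches the paper's approach: the paper's proof is simply ``It follows from Proposition \ref{3.10} and its proof,'' and you have spelled out exactly that, using the socle filtration computed inside the proof of Proposition \ref{3.10} together with the radical filtration from Proposition \ref{3.8} and the uniqueness of the maximal submodule of an indecomposable projective. Your invocation of Corollary \ref{3.5} to identify $L(\s^{\pm1}(\l))$ with $V(n-l,\s^{\pm1}(\l))$ is also the right bookkeeping step.
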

\begin{proof}
It follows from Proposition \ref{3.10} and its proof.
\end{proof}

\begin{corollary}\label{3.12}
A complete set of representatives for the  isomorphism classes of indecomposable projective $D(H_{\mathcal D})$-modules is given by
 $$\{P(l,\l),V(n,\mu)|1\<l\<n-1, \l\in I_l, \mu \in I_n \}.$$
\end{corollary}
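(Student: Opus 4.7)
The plan is to invoke the standard bijection, valid for any finite-dimensional algebra, between isomorphism classes of simple modules and isomorphism classes of indecomposable projective modules, sending a simple $V$ to its projective cover $P(V)$. Since the algebra $D(H_{\mathcal D})$ is finite-dimensional, every indecomposable projective module is of this form, so it suffices to exhibit the projective cover of each simple module and to check that no two of the resulting modules coincide.

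First I would recall, from \prref{3.6}, that $\{V(l,\l)\mid 1\<l\<n,\,\l\in I_l\}$ is a complete irredundant list of simple $D(H_{\mathcal D})$-modules. The same proposition, together with \coref{3.9}(1), shows that for $\mu\in I_n$ the simple module $V(n,\mu)$ is projective, and therefore coincides with its own projective cover $P(V(n,\mu))$. For the remaining simples $V(l,\l)$ with $1\<l\<n-1$ and $\l\in I_l$, \prref{3.10} already supplies the identification $P(l,\l)\cong P(\l)$, so $P(l,\l)$ is the projective cover of $V(l,\l)$.

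To conclude, I would verify that the list $\{P(l,\l),V(n,\mu)\mid 1\<l\<n-1,\,\l\in I_l,\,\mu\in I_n\}$ contains no repetitions. This is immediate once we note that for $1\<l\<n-1$ the head $P(l,\l)/{\rm rad}\,P(l,\l)\cong V(l,\l)$ (by \leref{3.11}(1), ${\rm rad}\,P(l,\l)={\rm soc}^2 P(l,\l)$ with quotient $V(l,\l)$), while the head of $V(n,\mu)$ is itself $V(n,\mu)$. Hence distinct pairs $(l,\l)$ or $(n,\mu)$ yield distinct simple tops, and \prref{3.6} guarantees those tops are pairwise non-isomorphic. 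Combined with the fact that indecomposable projectives are classified up to isomorphism by their tops, this proves the listed set is complete and irredundant.

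There is no real obstacle here: all the substantive work has been done in \prref{3.6}, \coref{3.9}, \prref{3.10}, and \leref{3.11}. The corollary is essentially a bookkeeping statement assembling those results via the simple-to-projective-cover correspondence.
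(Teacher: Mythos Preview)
Your proposal is correct and follows essentially the same approach as the paper, which simply writes ``It follows from Propositions~\ref{3.6} and~\ref{3.10}.'' You have merely spelled out in more detail the standard simple-to-projective-cover bijection and the verification of irredundancy (invoking Lemma~\ref{3.11} for the tops), which the paper leaves implicit.
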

\begin{proof}
It follows from Propositions \ref{3.6} and  \ref{3.10}.
\end{proof}

\begin{corollary}\label{3.13}
If $P$ is a non-simple indecomposable projective $D(H_{\mathcal D})$-module, then ${\rm rad}P={\rm soc}^2P$ and ${\rm rad}^2 P={\rm soc}P$.
\end{corollary}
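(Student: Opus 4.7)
The plan is to reduce the statement to the content already established in Corollary~\ref{3.12} and Lemma~\ref{3.11}. By Corollary~\ref{3.12}, the indecomposable projective $D(H_{\mathcal D})$-modules fall into exactly two families: the simple projectives $V(n,\mu)$ with $\mu\in I_n$, and the modules $P(l,\l)$ with $1\<l\<n-1$ and $\l\in I_l$. The hypothesis that $P$ is non-simple rules out the first family, so up to isomorphism $P\cong P(l,\l)$ for some $1\<l\<n-1$ and $\l\in I_l$.

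For this remaining case, the two equalities are exactly the content of Lemma~\ref{3.11}(1): that lemma asserts ${\rm soc}\,P(l,\l)={\rm rad}^2P(l,\l)$ and ${\rm soc}^2P(l,\l)={\rm rad}\,P(l,\l)$, which are precisely the two identities we must verify. No further calculation is required once we have invoked the classification and Lemma~\ref{3.11}.

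There is essentially no obstacle here; the only thing to be careful about is that the statement is formulated for a general non-simple indecomposable projective $P$, and not directly for the modules $P(l,\l)$, so one must first identify $P$ with some $P(l,\l)$ via Corollary~\ref{3.12} before applying Lemma~\ref{3.11}. Since radical and socle are preserved under isomorphism, the conclusion transfers to $P$ itself, completing the proof.
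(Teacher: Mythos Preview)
Your proof is correct and follows essentially the same approach as the paper, which simply cites Proposition~\ref{3.6}, Lemma~\ref{3.11} and Corollary~\ref{3.12}. You have spelled out explicitly how these references combine: Corollary~\ref{3.12} (with Proposition~\ref{3.6} implicitly ensuring the $V(n,\mu)$ are simple) reduces to the case $P\cong P(l,\l)$, and Lemma~\ref{3.11}(1) gives the two equalities.
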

\begin{proof}
It follows from Proposition \ref{3.6}, Lemma \ref{3.11} and Corollary \ref{3.12}.
\end{proof}

\begin{lemma}\label{3.14}
Let $M$ be an indecomposable $D(H_{\mathcal D})$-module. If ${\rm rl}(M)$={\rm 1}, then $M\cong V(l,\l)$ for some $1\<l\<n$ and $\l \in I_l$. If ${\rm rl}(M)=3$, then $M\cong P(l,\l)$ for some $1\<l\<n-1$ and $\l \in I_l$.
\end{lemma}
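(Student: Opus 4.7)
The plan is to treat the two cases separately: the case $\text{rl}(M)=1$ is essentially immediate, while the case $\text{rl}(M)=3$ reduces to a projective-cover argument that exploits the fact that every non-simple indecomposable projective $D(H_{\mathcal D})$-module has simple socle.

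If $\text{rl}(M)=1$, then the Jacobson radical $J$ of $D(H_{\mathcal D})$ annihilates $M$, so $M$ is semisimple. Being indecomposable, $M$ is in fact simple, and Proposition \ref{3.6} gives $M\cong V(l,\l)$ for some $1\<l\<n$ and $\l\in I_l$.

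Now suppose $\text{rl}(M)=3$. Let $\pi: P(M)\to M$ be the projective cover; since $M$ is indecomposable so is $P(M)$, and by Corollary \ref{3.12} either $P(M)\cong V(n,\mu)$ for some $\mu\in I_n$, or $P(M)\cong P(l,\l)$ for some $1\<l\<n-1$ and $\l\in I_l$. The first alternative is impossible: if $P(M)$ were simple then every nonzero quotient would equal $P(M)$, forcing $\text{rl}(M)=1$ and contradicting the hypothesis. Hence $P(M)\cong P(l,\l)$ with $1\<l\<n-1$ and $\l\in I_l$. Set $K:=\ker(\pi)$; because $\pi$ is essential, $K\subseteq \text{rad}\,P(M)=JP(M)$.

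It remains to show that $K=0$. Since $J^3=0$ by Corollary \ref{3.9}(3) and $\text{rad}^i(M)=(J^iP(M)+K)/K$, the assumption $\text{rl}(M)=3$ is equivalent to $J^2P(M)\not\subseteq K$. By Lemma \ref{3.11}(1), $J^2P(M)=\text{soc}\,P(M)\cong V(l,\l)$ is simple, so $K\cap\text{soc}\,P(M)=0$. However, every nonzero submodule of $P(M)$ meets $\text{soc}\,P(M)$ nontrivially, so a nonzero $K$ would yield a contradiction. Therefore $K=0$ and $\pi$ is an isomorphism, giving $M\cong P(l,\l)$. The main technical point is the final step: one must combine the simple-socle property of $P(l,\l)$ from Lemma \ref{3.11}(1) with the standard fact $K\subseteq JP(M)$ to force the kernel of the projective cover to vanish.
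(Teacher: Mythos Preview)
Your proof is correct and follows essentially the same approach as the paper. The paper's own proof simply cites Proposition \ref{3.6}, Corollary \ref{3.12}, and \cite[Lemma 3.5]{Ch3}; your argument for the case $\mathrm{rl}(M)=3$ (projective cover, kernel contained in the radical, simple socle forces $K=0$) is precisely the content of that cited lemma, spelled out in detail.
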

\begin{proof}
It follows from  Proposition \ref{3.6}, Corollary \ref{3.12} and \cite[Lemma 3.5]{Ch3}.
\end{proof}

\begin{proposition}\cite[Theorem 4]{KR}\label{3.15}
Each block of $D(H_{\mathcal D})$ is either isomorphic to $M_{n}(\Bbbk)$, or isomorphic as a left $D(H_{\mathcal D})$-module to $$(\oplus_{k=0}^{m-1}lP(l,\tau^k(\l)))\oplus(\oplus_{k=0}^{m-1}(n-l)P(n-l,\tau^k(\s(\l)))), $$
where $1\<l\<n-1$ and $\l \in I_l$.
\end{proposition}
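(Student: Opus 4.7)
The plan is to use the standard fact that for a finite-dimensional algebra $A$, two simple modules $L(\mu)$, $L(\nu)$ lie in the same block iff they are \emph{linked}, i.e.\ connected by a chain of simples in which successive terms both appear as composition factors of some indecomposable projective; and that each block $B$, as a left $A$-module, decomposes as $\bigoplus_{V\in B}(\dim V)P(V)$. So it is enough to determine the linkage classes of the $L(\l)$ and then read off multiplicities.

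First I would dispose of the semisimple blocks. If $\l\in I_n$, then by Proposition \ref{3.6} the simple module $V(n,\l)=P(V(n,\l))$ is projective, so its composition series and radical series are trivial and it is linked to no other simple. The corresponding block contains only $V(n,\l)$, and as a left module is $n\cdot V(n,\l)$, i.e.\ isomorphic to $M_n(\Bbbk)$ as an algebra. This accounts for all blocks attached to $I_n$.

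For the remaining blocks, take $\l\in\widetilde{\L}$, say $\l\in I_l$ with $1\<l\<n-1$. Proposition \ref{3.8} gives that the composition factors of $P(\l)$ are $L(\l)$ (twice), $L(\s(\l))$ and $L(\s^{-1}(\l))$, so $L(\l)$ is linked to $L(\s(\l))$ and to $L(\s^{-1}(\l))$. Iterating, the entire $\s$-orbit of $\l$ lies in the block of $L(\l)$. By Lemma \ref{3.3}(3) the map $\s$ has order $2m$ and $\tau=\s^2$ has order $m$, while Lemma \ref{3.3}(4) says the two $\tau$-orbits $\{\tau^k(\l)\}_{k=0}^{m-1}$ and $\{\tau^k(\s(\l))\}_{k=0}^{m-1}$ are disjoint; hence the $\s$-orbit consists of exactly $2m$ distinct characters. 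Corollary \ref{3.5} gives $\s(\l)\in I_{n-l}$, so $\tau^k(\l)\in I_l$ and $\tau^k(\s(\l))\in I_{n-l}$ for every $k$. To close the argument I must check that no further simple $L(\mu)$ is linked to this orbit; but the only non-simple indecomposable projectives are the $P(\mu)$ with $\mu\in\widetilde{\L}$, and each such $P(\mu)$ only links $L(\mu)$ to $L(\s^{\pm1}(\mu))$, so the block equals the $\s$-orbit.

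Finally, assembling the decomposition: the block $B(\l)$, as a left $D(H_{\mathcal D})$-module, is
\[
\bigoplus_{k=0}^{m-1}\bigl(\dim V(l,\tau^k(\l))\bigr)P(l,\tau^k(\l))\;\oplus\;\bigoplus_{k=0}^{m-1}\bigl(\dim V(n-l,\tau^k(\s(\l)))\bigr)P(n-l,\tau^k(\s(\l))),
\]
and substituting $\dim V(l,\tau^k(\l))=l$ and $\dim V(n-l,\tau^k(\s(\l)))=n-l$ yields the stated formula. The only real obstacle is the book-keeping in the $\s$-orbit: one must separately verify that the two $\tau$-orbits are disjoint and that $\tau$ preserves $I_l$, both of which follow immediately from Lemma \ref{3.3} and Corollary \ref{3.5}; everything else is formal from the linkage description of blocks.
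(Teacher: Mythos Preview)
The paper does not give a proof of this proposition; it is simply quoted from \cite[Theorem~4]{KR}. Your argument via linkage classes is the standard one and is correct. One small point worth tightening: to conclude that the $\s$-orbit has exactly $2m$ elements (so that the displayed direct sum has no repeated summands), you need not only that the two $\tau$-orbits $\{\tau^k(\l)\}$ and $\{\tau^k(\s(\l))\}$ are disjoint (Lemma~\ref{3.3}(4)) but also that each $\tau$-orbit has size exactly $m$, not merely size dividing $m$. This is immediate since $\tau(\mu)=\mu\phi^n$ is translation by an element of order exactly $m$ (Remark~\ref{3.4}), so $\tau^k(\mu)=\mu$ iff $m\mid k$; but you should say so explicitly.
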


\section{Indecomposable modules with Loewy length two}\label{s4}

In this section, we investigate the non-simple, non-projective indecomposable $D(H_{\mathcal D})$-modules. By Lemma \ref{3.14}, such indecomposable modules have Loewy length 2.

\begin{proposition}\label{4.1}
Let $M$ be an indecomposable $D(H_{\mathcal D})$-module with ${\rm rl}(M)=2$. Then there exist a $1\<l\<n-1$ and a $\l \in I_l$ such that $I(M)\cong\oplus_{k=0}^{m-1}s_kP(l,\tau^k(\l))$ for some $s_k\in \mathbb{N}$, $0\<k\<m-1$.
\end{proposition}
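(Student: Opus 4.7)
The plan is to combine the block decomposition from Proposition \ref{3.15} with a bipartite analysis of $\mathrm{Ext}^1$ between simple modules within a non-semisimple block. Since $M$ is indecomposable with $\mathrm{rl}(M)=2$, $M$ is not simple, and in particular cannot be a summand of a block isomorphic to $M_n(\Bbbk)$. By Proposition \ref{3.15}, $M$ therefore lies in a non-semisimple block $B$ parameterized by some $1\<l_0\<n-1$ and $\l_0\in I_{l_0}$. The simple modules in $B$ split into two families
\[
\mathcal{F}_1=\{V(l_0,\tau^k\l_0):0\<k\<m-1\},\qquad \mathcal{F}_2=\{V(n-l_0,\tau^k\sigma\l_0):0\<k\<m-1\}.
\]
Reading off $\mathrm{rad}(P(l_0,\tau^k\l_0))/\mathrm{rad}^2$ from Lemma \ref{3.11}, the first syzygy of any simple in $\mathcal{F}_1$ has top supported on $\mathcal{F}_2$, and conversely. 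In particular $\mathrm{Ext}^1(V,V')=0$ whenever $V,V'$ lie in the same family; i.e.\ the $\mathrm{Ext}^1$-quiver of $B$ is bipartite between $\mathcal{F}_1$ and $\mathcal{F}_2$.

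Since $\mathrm{rl}(M)=2$, we have $\mathrm{rad}(M)=\mathrm{soc}(M)$, and $M$ fits in an extension
\[
0\longrightarrow \mathrm{soc}(M)\longrightarrow M\longrightarrow M/\mathrm{soc}(M)\longrightarrow 0
\]
with both ends semisimple and supported on $B$. Decompose $M/\mathrm{soc}(M)=T_1\oplus T_2$ and $\mathrm{soc}(M)=S_1\oplus S_2$ according to the two families. By the Ext vanishing above, the extension class has zero components in $\mathrm{Ext}^1(T_1,S_1)$ and $\mathrm{Ext}^1(T_2,S_2)$. We may therefore choose $\Bbbk$-linear lifts $\widetilde T_i\subseteq M$ of $T_i$ so that $J\cdot \widetilde T_1\subseteq S_2$ and $J\cdot \widetilde T_2\subseteq S_1$, where $J$ is the Jacobson radical of $D(H_{\mathcal D})$. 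Then $M_{12}:=\widetilde T_1+S_2$ and $M_{21}:=\widetilde T_2+S_1$ are submodules of $M$ with $M_{12}\cap M_{21}=0$ and $M_{12}+M_{21}=M$, so $M=M_{12}\oplus M_{21}$.

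Since $M$ is indecomposable, one of $M_{12},M_{21}$ must vanish. If $M_{21}=0$, then $T_2=S_1=0$, so $\mathrm{soc}(M)=S_2$ consists entirely of simples in $\mathcal{F}_2$, and
\[
I(M)=I(\mathrm{soc}(M))\cong\bigoplus_{k=0}^{m-1}s_kP(n-l_0,\tau^k\sigma\l_0)
\]
for some $s_k\in\mathbb{N}$. Setting $(l,\l):=(n-l_0,\sigma\l_0)$, Corollary \ref{3.5} gives $1\<l\<n-1$ and $\l\in I_l$, and the claimed form follows. The case $M_{12}=0$ is analogous, with $(l,\l)=(l_0,\l_0)$.

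The main obstacle is the explicit decomposition $M=M_{12}\oplus M_{21}$: while the bipartite Ext structure makes the splitting clear at the level of Yoneda classes, one must verify that the lifts $\widetilde T_i$ can indeed be chosen so that $J\cdot \widetilde T_i$ lies entirely in the predicted half of the socle. Once that is established, indecomposability immediately forces one summand to vanish, and the remaining steps are purely a matter of applying the block description of Proposition \ref{3.15} and the relabeling from Corollary \ref{3.5}.
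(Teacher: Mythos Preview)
Your approach is correct and the bipartite $\mathrm{Ext}^1$ structure is exactly the right organizing principle. The one point that needs tightening is the construction of the lifts: the condition $J\cdot\widetilde T_1\subseteq S_2$ alone does not guarantee that $\widetilde T_1+S_2$ is a \emph{submodule}, since elements of $D(H_{\mathcal D})\setminus J$ must also preserve it. The clean fix is to bypass lifts entirely and argue at the level of Yoneda classes: the extension $0\to S_1\oplus S_2\to M\to T_1\oplus T_2\to 0$ has class $(\epsilon_{ij})\in\bigoplus_{i,j}\mathrm{Ext}^1(T_i,S_j)$ with $\epsilon_{11}=\epsilon_{22}=0$, so $M$ is isomorphic (as an extension, hence as a module) to $E_{12}\oplus E_{21}$, where $E_{ij}$ realizes the class $\epsilon_{ij}$. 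Equivalently, pull back along $T_1\hookrightarrow T$, push out along $S\twoheadrightarrow S_1$, and use $\mathrm{Ext}^1(T_1,S_1)=0$ to split; the preimage of the resulting copy of $T_1$ is your submodule $M_{12}$. Either way the obstacle you flagged dissolves.

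The paper's proof reaches the same decomposition by a different route: it embeds $M$ into its injective hull $I(M)=P_1\oplus P_2$ (with $P_1,P_2$ built from the two families of projectives in the block), observes $M\subseteq\mathrm{rad}\,I(M)$, and then shows directly that $M=(M\cap P_1)\oplus(M\cap P_2)$ by checking, simple-by-simple, that each composition factor of $M/\mathrm{soc}\,M$ already lies in $\pi(\mathrm{rad}\,P_1)$ or $\pi(\mathrm{rad}\,P_2)$ (using Lemma~\ref{3.3}(4) in place of your Ext vanishing). Your argument is more categorical and transfers verbatim to any block whose $\mathrm{Ext}^1$-quiver on simples is bipartite; the paper's is more hands-on and makes the pieces $M_1,M_2$ visible inside a fixed injective hull, which is convenient for the explicit basis computations that follow later in the section.
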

\begin{proof}
Clearly, $M$ is an indecomposable module over a non-simple block of $D(H_{\mathcal D})$, and its injective hull $I(M)$ is projective. By Proposition \ref{3.15}, we can write $I(M)=P_1\oplus P_2$, where
$$P_1\cong\oplus_{k=0}^{m-1}s_kP(l,\tau^k(\l))\ \mathrm{and}\  P_2\cong\oplus_{k=0}^{m-1}t_kP(n-l,\tau^k(\s(\l)))$$
 for some character $\l\in I_l$, $1\<l\<n-1$ and integers $s_k, t_k\in\mathbb N$, $0\<k\<m-1$.
 We may view $M$ as a submodule of $I(M)$ and set $M_1=M\cap P_1$ and $M_2=M\cap P_2$. Then ${\rm soc}M={\rm soc}P_1\oplus{\rm soc}P_2$ and so ${\rm soc}M\subset M_1\oplus M_2$. Since rl$(M)=2$, the quotient $M/{\rm soc}M$ is semisimple. Hence
 $$M/{\rm soc}M\subseteq{\rm soc}^2I(M)/{\rm soc}I(M),$$
  and  consequently  $M\subseteq{\rm rad}I(M)={\rm soc}^2I(M)$ by Corollary \ref{3.13}.

Let $\pi:I(M)\rightarrow I(M)/{\rm soc} I(M)$ be the canonical epimorphism. Then
$$M/{\rm soc}M\subseteq{\rm rad}I(M)/{\rm rad}^2I(M)=\pi({\rm rad}P_1)\oplus\pi({\rm rad}P_2).$$
By Lemma \ref{3.11}, we have
$$\begin{array}{rcl}
\pi({\rm rad}P_1)&\cong&{\rm rad}P_1/{\rm rad}^2P_1\cong\oplus_{k=0}^{m-1}(s_k+s_{k+1})V(n-l,\tau^k(\s(\l))),\\
\pi({\rm rad}P_2)&\cong&{\rm rad}P_2/{\rm rad}^2P_2\cong\oplus_{k=0}^{m-1}(t_{k-1}+t_k)V(l,\tau^k(\l)),\\
\end{array}$$
where $s_m=s_0$ and $t_{-1}=t_{m-1}$. By Lemma \ref{3.3}(4), if $V$ is a simple submodule of $M/{\rm soc}M$, then  either $V\subseteq\pi({\rm rad}P_1)$ or $V\subseteq\pi({\rm rad}P_2)$. In the former case,
$$\pi^{-1}(V)\subseteq M\cap({\rm rad}P_1+{\rm rad}^2I(M))=M\cap({\rm rad}P_1\oplus{\rm rad}^2P_2)\subseteq M_1\oplus M_2.$$
Similarly, if $V\subseteq\pi({\rm rad}P_2)$,  then $\pi^{-1}(V)\subseteq M_1\oplus M_2$.
Therefore, $M\subseteq M_1\oplus M_2$, and hence $M=M_1\oplus M_2$.

Since $M$ is indecomposable, it follows that $M_1=0$ or $M_2=0$, and consequently $P_1=0$ or $P_2=0$.  Thus, we conclude either
 $$I(M)\cong\oplus_{k=0}^{m-1}s_kP(l,\tau^k(\l)),\  \mathrm{or}\  I(M)\cong\oplus_{k=0}^{m-1}t_kP(n-l,\tau^k(\s(\l))).$$
\end{proof}

Let $M$ be an indecomposable $D(H_{\mathcal D})$-module with ${\rm rl}(M)=2$. By \cite[Lemma 3.7]{Ch3}, we have
${\rm soc}M={\rm rad}M$. If $l({\rm soc}M)=t$ and $l(\text{head}M)=s$, then we say that $M$ is of $(s,t)$-type (cf. \cite{Ch3}).

\begin{corollary}\label{4.2}
Let $M$ be of $(s,t)$-type such that
$${\rm soc}M\cong\oplus_{k=0}^{m-1}t_kV(l,\tau^k(\l))$$
 for some $1\<l\<n-1$ and $\l \in I_l$.  Then:
\begin{enumerate}
\item[(1)]  ${\rm head}M\cong\oplus_{k=0}^{m-1}s_kV(n-l,\tau^k(\s(\l)))$ for some integers $s_k\>0$.
\item[(2)] ${\rm Hom}_{D(H_{\mathcal D})}(M, {\rm soc}M)={\rm Hom}_{D(H_{\mathcal D})}({\rm head}M, {\rm soc}M)=0$.
\end{enumerate}
\end{corollary}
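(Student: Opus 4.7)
The plan is to leverage \prref{4.1} together with the socle/radical information provided by \leref{3.11} and the non-isomorphism statement in \leref{3.3}(4). As a first step, I would pin down which of the two possibilities for $I(M)$ listed in the conclusion of \prref{4.1} actually occurs. Since $I(M)$ is the injective hull of $M$, we have ${\rm soc}(I(M))={\rm soc}(M)$; by \leref{3.11}(1), ${\rm soc}\,P(l,\tau^k(\l))\cong V(l,\tau^k(\l))$ and ${\rm soc}\,P(n-l,\tau^k(\s(\l)))\cong V(n-l,\tau^k(\s(\l)))$. Combining the given form of ${\rm soc}\,M$ with \leref{3.3}(4), which ensures $V(l,\tau^{k'}(\l))\ncong V(n-l,\tau^k(\s(\l)))$, forces
\[I(M)\cong\oplus_{k=0}^{m-1}t_kP(l,\tau^k(\l)).\]

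For (1), I would use the inclusion $M\subseteq{\rm rad}(I(M))={\rm soc}^2(I(M))$ already established in the proof of \prref{4.1}. Because ${\rm rl}(M)=2$, we have ${\rm rad}\,M={\rm soc}\,M$, so ${\rm head}\,M=M/{\rm soc}\,M$ embeds as a semisimple submodule of ${\rm soc}^2(I(M))/{\rm soc}(I(M))$. By \leref{3.11}(2) applied to each summand of $I(M)$, this quotient is
\[\oplus_{k=0}^{m-1}t_k\bigl(V(n-l,\tau^k(\s(\l)))\oplus V(n-l,\tau^k(\s^{-1}(\l)))\bigr).\]
Using $\tau=\s^2$ to write $\tau^k(\s^{-1}(\l))=\tau^{k-1}(\s(\l))$ and reindexing, this becomes $\oplus_{k=0}^{m-1}(t_k+t_{k+1})V(n-l,\tau^k(\s(\l)))$ (indices mod $m$), so ${\rm head}\,M$ must have the stated form with $0\<s_k\<t_k+t_{k+1}$.

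For (2), any homomorphism $f\colon M\to{\rm soc}\,M$ sends ${\rm rad}\,M$ into ${\rm rad}({\rm soc}\,M)=0$ since ${\rm soc}\,M$ is semisimple, so $f$ factors uniquely through ${\rm head}\,M=M/{\rm rad}\,M$; this yields ${\rm Hom}_{D(H_{\mathcal D})}(M,{\rm soc}\,M)\cong{\rm Hom}_{D(H_{\mathcal D})}({\rm head}\,M,{\rm soc}\,M)$. By part (1), the composition factors of ${\rm head}\,M$ are of the form $V(n-l,\tau^k(\s(\l)))$, whereas those of ${\rm soc}\,M$ are $V(l,\tau^{k'}(\l))$; \leref{3.3}(4) guarantees these simples are pairwise non-isomorphic, so Schur's lemma yields that the Hom-space is zero. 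The only subtle point in the argument is the first step, namely excluding the possibility $I(M)\cong\oplus_{k=0}^{m-1}t_kP(n-l,\tau^k(\s(\l)))$, which requires invoking \leref{3.3}(4) to compare socles; everything else is a direct application of earlier results.
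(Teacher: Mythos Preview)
Your proof is correct and follows essentially the same approach as the paper. The paper's proof is very terse---it simply says that (1) ``follows from the proof of Proposition~\ref{4.1}'' and then derives (2) from (1), ${\rm rad}\,M={\rm soc}\,M$, and Lemma~\ref{3.3}(4)---whereas you spell out the details, including the explicit identification of $I(M)$ from ${\rm soc}\,M$ and the bound $s_k\leqslant t_k+t_{k+1}$.
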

\begin{proof}
(1) follows from the proof of Proposition \ref{4.1}. Since ${\rm rad}M={\rm soc}M$ and ${\rm soc}M$ is semisimple, it follows from (1) and together with Lemma \ref{3.3}(4) that
$$\begin{array}{rl}
&{\rm Hom}_{D(H_{\mathcal D})}(M, {\rm soc}M)={\rm Hom}_{D(H_{\mathcal D})}({\rm head}M, {\rm soc}M)\\
\cong&{\rm Hom}_{D(H_{\mathcal D})}(\oplus_{k=0}^{m-1}s_kV(n-l,\tau^k(\s(\l))), \oplus_{k=0}^{m-1}t_kV(l,\tau^k(\l)))=0.\\
\end{array}$$
This proves (2).
\end{proof}

Let $M$ be an $D(H_{\mathcal D})$-module. Fix a projective cover $f: P(M)\ra M$ and define the syzygy $\Omega M$ of $M$ to be ${\rm Ker}f$. Dually, fix an injective envelope $f: M\ra I(M)$ and define the cosyzygy $\Omega^{-1}M$ of $M$ to be ${\rm Coker}f$.
By \cite[p.126]{ARS}, one obtains a family of indecomposable $D(H_{\mathcal D})$-modules $$\Omega^{\pm s}V(l,\l),\  \mathrm{where}\  s\>1, \ 1\<l\<n-1, \l\in I_l.$$



\begin{lemma}\label{4.3}
Let $M$ be of $(s,t)$-type.
\begin{enumerate}
\item[(1)] $t\<2s$, moreover, if $s\neq 1$ then $t<2s$ and $\Omega M$ is of $(2s-t,s)$-type.
\item[(2)] $s\<2t$, moreover, if $t\neq 1$ then $s<2t$ and $\Omega^{-1} M$ is of $(t,2t-s)$-type.
\item[(3)] If $s=t$, then both $\Omega M$ and $\Omega^{-1} M$ are of $(t,t)$-type.
\end{enumerate}
\end{lemma}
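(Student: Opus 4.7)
The plan is to describe $\Omega M$ explicitly via the projective cover $P(M)\to M$ and then read off its socle and head from the known Loewy structure of $P(M)$. By Corollary \ref{4.2}, if ${\rm soc}(M)\cong\oplus_{k=0}^{m-1}t_kV(l,\tau^k(\l))$ then ${\rm head}(M)\cong\oplus_{k=0}^{m-1}s_kV(n-l,\tau^k\s(\l))$, so $P(M)\cong\oplus_{k=0}^{m-1}s_kP(n-l,\tau^k\s(\l))$. Applying Lemma \ref{3.11} to each summand and using $\s^2=\tau$, $P(n-l,\tau^k\s(\l))$ has socle $V(n-l,\tau^k\s(\l))$ and middle radical layer $V(l,\tau^{k+1}(\l))\oplus V(l,\tau^k(\l))$. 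Hence $l(P(M))=4s$, while ${\rm soc}(P(M))$ is semisimple of length $s$ consisting only of $V(n-l,\cdot)$-type simples and ${\rm rad}(P(M))/{\rm soc}(P(M))$ is semisimple of length $2s$ consisting only of $V(l,\cdot)$-type simples.

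Let $f\colon P(M)\twoheadrightarrow M$ denote the cover, so $\Omega M=\ker f\subseteq{\rm rad}(P(M))$. Because simples of $V(n-l,\cdot)$-type and of $V(l,\cdot)$-type lie in distinct $\s$-orbits by Lemma \ref{3.3}(4), $f({\rm soc}(P(M)))=0$; thus ${\rm soc}(P(M))\subseteq\Omega M$, and $f$ descends to a surjection of semisimples ${\rm rad}(P(M))/{\rm soc}(P(M))\twoheadrightarrow{\rm soc}(M)$ of lengths $2s$ and $t$. This already gives $t\<2s$, and it realises $\Omega M/{\rm soc}(P(M))$ as the kernel --- a semisimple module of length $2s-t$. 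If $t=2s$, this surjection is an isomorphism, so $\Omega M={\rm soc}(P(M))$ and $M\cong\oplus_{k=0}^{m-1}s_k\bigl(P(n-l,\tau^k\s(\l))/{\rm soc}\bigr)$ is a direct sum of $s$ nontrivial modules; indecomposability of $M$ then forces $s=1$. This gives $t<2s$ whenever $s\neq 1$.

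For the type of $\Omega M$ when $s\neq 1$ (so $2s-t\>1$), $M$ is non-projective since by Corollary \ref{3.9}(2) every non-simple indecomposable projective has Loewy length $3$. Because $D(H_{\mathcal D})$ is symmetric, $\Omega$ is a self-equivalence of the stable module category, and a standard argument shows the kernel of a projective cover over a self-injective algebra has no nonzero projective summand, whence $\Omega M$ is indecomposable as a module. By Corollary \ref{3.9}(3) $J^3=0$ and by Corollary \ref{3.13} ${\rm soc}(P(M))=J^2P(M)$, so $J\cdot\Omega M\subseteq J^2P(M)={\rm soc}(P(M))$. From this ${\rm soc}(\Omega M)={\rm soc}(P(M))$ (length $s$) and ${\rm rad}(\Omega M)\subseteq{\rm soc}(P(M))$. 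The decisive step is to promote this to an equality: if a nonzero semisimple complement $T\subseteq{\rm soc}(P(M))$ to ${\rm rad}(\Omega M)$ existed, then $T$ would embed as a direct summand of ${\rm head}(\Omega M)$ via the canonical projection $\Omega M\twoheadrightarrow{\rm head}(\Omega M)$, and pulling back a complementary summand would produce a decomposition $\Omega M=T\oplus N$, contradicting indecomposability of $\Omega M$. Hence ${\rm head}(\Omega M)$ has length $2s-t$ and $\Omega M$ is of $(2s-t,s)$-type, establishing (1).

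Part (2) is the formal dual, with the injective envelope $I(M)$ (itself projective since $D(H_{\mathcal D})$ is symmetric) replacing the projective cover and the cosyzygy $\Omega^{-1}M=I(M)/M$ replacing $\Omega M$, the roles of head/socle and of $l$ and $n-l$ being exchanged throughout. Part (3) is the case $s=t$ of (1) and (2): here $t=s\>1$ automatically gives $t<2s$, so the arguments above apply uniformly --- including when $s=1$ --- yielding $\Omega M$ of type $(2s-t,s)=(t,t)$ and $\Omega^{-1}M$ of type $(t,2t-s)=(t,t)$. The main obstacle throughout is the identification ${\rm rad}(\Omega M)={\rm soc}(P(M))$: the inclusion $\subseteq$ is forced by $J^3=0$, but the reverse inclusion is invisible from the projective-cover diagram and instead rests on indecomposability of $\Omega M$, itself a consequence of the self-injective structure of $D(H_{\mathcal D})$.
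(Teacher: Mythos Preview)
Your proof is correct and follows essentially the same approach as the paper: identify $P(M)$ from ${\rm head}M$, show ${\rm soc}P(M)\subseteq\ker f$, bound $t$ by the length $2s$ of ${\rm rad}P(M)/{\rm rad}^2P(M)$, rule out $t=2s$ for $s>1$ by indecomposability of $M$, and then read off the type of $\Omega M$ from length counts together with ${\rm rad}(\Omega M)={\rm soc}(\Omega M)$. The paper is terser---it gets ${\rm soc}P(M)\subseteq\ker f$ from ${\rm rad}^2P(M)\subseteq\ker(f|_{{\rm rad}P(M)})$ rather than via Lemma~\ref{3.3}(4), and simply asserts the type of $\Omega M$ without spelling out why ${\rm rad}(\Omega M)={\rm soc}(\Omega M)$ (which, as you note, rests on indecomposability of $\Omega M$); your treatment of part~(3), observing that $s=t$ forces $t<2s$ so the argument covers $s=1$ as well, is also a bit more careful than the paper's one-line ``the result is clear.''
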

\begin{proof}
(1) By the proof of Proposition \ref{4.1}, we have
$${\rm head}M\cong\oplus_{k=0}^{m-1}s_kV(l,\tau^k(\l))\ \mathrm{and}\ P(M)\cong\oplus_{k=0}^{m-1}s_kP(l,\tau^k(\l))$$
for some $s_k\in \mathbb{N}$, $0\<k\<m-1$ with $\sum_{k=0}^{m-1}s_k=s$.
Let $f: P(M)\rightarrow M$ be an epimorphism. Then the restriction $f|_{{\rm rad}P(M)}: {\rm rad}P(M)\rightarrow{\rm rad}M$ remains an epimorphism. Since ${\rm rad}M={\rm soc}M$ is semisimple, we have ${\rm rad}^2P(M)\subseteq{\rm Ker}(f|_{{\rm rad}P(M)})$. Hence,
$$l({\rm soc}M)\<l({\rm rad}P(M)/{\rm rad}^2P(M))=2s,$$
 and  therefore $t\<2s$.

Now assume $s\neq1$. Then ${\rm soc}P(M)={\rm rad}^2P(M)\subseteq{\rm Ker}f$. Thus,  $f$ induces an epimorphism
$$\ol{f}: P(M)/{\rm rad}^2P(M)\rightarrow M. $$
 If $t=2s$, then $l(M)=3s=l(P(M)/\text{soc}P(M))$,  so  $\ol{f}$ would be  an isomorphism. However, $P(M)/\text{rad}^2P(M)$ is not indecomposable when $s>1$, a contradiction. Therefore, $t<2s$. Meanwhile, since $\Omega M\cong {\rm Ker}f\subseteq{\rm rad}P(M)$ and ${\rm rad}^2P(M)\neq{\rm Ker}f$, it follows that ${\rm rl}(\Omega M)=2$ and $l(\text{soc}(\Omega M))=l(\text{soc}P(M))=s$. Moreover, $l(\Omega M)=l(P(M))-l(M)=3s-t$,  so $l(\Omega M/\text{soc}(\Omega M))=2s-t$. That is, $\Omega M$ is of $(2s-t,s)$-type.

(2) The statement is dual to (1).

(3) The result  is clear.
\end{proof}

\begin{proposition}\label{4.4}
Let $M$ be an indecomposable $D(H_{\mathcal D})$-module of $(s,t)$-type.
\begin{enumerate}
\item[(1)] If $s<t$, then $t=s+1$ and $M\cong\Omega^{-s}V(l,\l)$ for some $1\<l\<n-1$ and $\l\in I_l$.
\item[(2)] If $s>t$, then $s=t+1$ and $M\cong\Omega^tV(l,\l)$  for some $1\<l\<n-1$ and $\l\in I_l$.
\end{enumerate}
\end{proposition}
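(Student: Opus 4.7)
The plan is to prove (1) by induction on $s+t$, with (2) following by the dual argument (using Lemma \ref{4.3}(2) and iterating $\Omega^{-1}$ in place of $\Omega$). The mechanism is that Lemma \ref{4.3}(1) shifts an $(s,t)$-type indecomposable with $s<t$ to a $(2s-t,s)$-type indecomposable, still satisfying $s'<t'$ and with $s'+t'=3s-t<s+t$, while the symmetry of $D(H_{\mathcal D})$ ensures that $\Omega$ preserves indecomposability of non-projective modules: if the kernel of a minimal projective cover $P(M)\rightarrow M$ contained a projective summand $Q$, then $Q$ (being injective by self-injectivity) would split off $P(M)$, contradicting its minimality.

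The base case of (1) is $s=1$: Lemma \ref{4.3}(1) forces $t\leqslant 2s=2$, so $t=2=s+1$. To identify a $(1,2)$-type indecomposable $N$ with some $\Omega^{-1}V(l,\l)$, observe that $\operatorname{head}(N)$ is simple and non-projective (otherwise it would split off $N$), hence $\operatorname{head}(N)\cong V(l,\l)$ for some $1\leqslant l\leqslant n-1$ and $\l\in I_l$, and consequently $P(N)\cong P(l,\l)$. Since $l(N)=3$ and $l(P(l,\l))=4$ by Corollary \ref{3.9}, the syzygy $\Omega N$ is simple; being a simple submodule of $P(l,\l)$ it must lie in $\operatorname{soc}P(l,\l)\cong V(l,\l)$ by Lemma \ref{3.11}, so $\Omega N\cong V(l,\l)$ and $N\cong P(l,\l)/V(l,\l)\cong\Omega^{-1}V(l,\l)$.

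For the inductive step of (1) with $s\geqslant 2$, indecomposability of $\Omega M$ combined with the induction hypothesis applied to the $(2s-t,s)$-type module $\Omega M$ yields $s=(2s-t)+1$, i.e.\ $t=s+1$, together with $\Omega M\cong\Omega^{-(s-1)}V(l,\l)$ for some $l,\l$; applying $\Omega^{-1}$ (which is inverse to $\Omega$ on non-injective modules, and $M$ is non-injective because non-projective) gives $M\cong\Omega^{-s}V(l,\l)$. Part (2) is entirely dual: the base $(2,1)$-type indecomposable $N'$ has simple socle $V(l,\l)$ with $1\leqslant l\leqslant n-1$, embeds in $I(V(l,\l))=P(l,\l)$, and a length count combined with the simplicity of $\operatorname{head}P(l,\l)\cong V(l,\l)$ shows that the only length-$3$ submodule of $P(l,\l)$ is $\operatorname{rad}P(l,\l)=\Omega V(l,\l)$, so $N'\cong\Omega V(l,\l)$. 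Iteration via Lemma \ref{4.3}(2) then gives $s=t+1$ and $M\cong\Omega^{t}V(l,\l)$.

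The main obstacle I anticipate is the explicit identification of the base cases $(1,2)$- and $(2,1)$-type indecomposables as the (co)syzygies $\Omega^{\pm 1}V(l,\l)$, which hinges on the detailed socle/radical structure of $P(l,\l)$ recorded in Lemma \ref{3.11} together with the length data from Corollary \ref{3.9}. A secondary technical point is justifying that iterated syzygies retain indecomposability; this rests on the symmetry of $D(H_{\mathcal D})$ and the minimality of the projective covers used to compute $\Omega$, as sketched in the first paragraph.
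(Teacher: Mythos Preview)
Your proof is correct and follows essentially the same strategy as the paper: handle the base case $s=1$ explicitly by a length count inside $P(l,\lambda)$, then iterate $\Omega$ via Lemma~\ref{4.3}(1) to reduce the general case to the base case. The only difference is organizational: the paper sets $r=t-s$, writes down the explicit formula $\Omega^iM$ is of $(s-ir,s-(i-1)r)$-type, and then argues that the iteration must terminate with first coordinate equal to $1$ (forcing $r=1$), whereas you package the same descent as an induction on $s+t$. Your version is slightly cleaner since it avoids the bookkeeping with $r$ and $l'$, which in the end only establishes $r=1$; conversely, the paper's explicit iteration makes the number of steps ($s-1$ applications of $\Omega$) visible from the start.
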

\begin{proof}

$(1)$ We first consider the case  $1=s<t$. Then $t=2$ by Lemma \ref{4.3}(1), and hence ${\rm head}M$ is simple. Let $f: P(M)\ra M$ be an epimorphism. Since $l(M)=3$ and $l(P(M))=4$, it follows  that $l({\rm Ker}f)=1$. Thus,  ${\rm Ker}f={\rm soc}P(M)\cong V(l,\l)$ for some $1\<l\<n-1$ and $\l\in I_l$.  Consequently,
$$M\cong P(M)/{\rm Ker}f=P(M)/{\rm soc}P(M)\cong\Omega^{-1}V(l,\l).$$
 Now assume $1<s<t$. Then $t<2s$ by Lemma \ref{4.3}. Set $r=t-s$,  so $1\<r<s$.
 There exists a unique positive integer $l'$ such that
 $$l'r<s\<(l'+1)r.$$
 By Lemma \ref{4.3}(1), one can show by the induction on $i$ that \\
 $\Omega^iM$ is of $(s-ir,s-(i-1)r)$-type for all $1\<i\<l'$.\\
  In particular,  $\Omega^{l'}M$ is of $(s-l'r,s-(l'-1)r)$-type.

If $s-l'r>1$, then by Lemma \ref{4.3}(1) we have
$$s-(l'-1)r<2(s-l'r),$$ which implies
 $(l'+1)r<s$, a contradiction. Hence $s-l'r=1$ and therefore $r=1$. It follows that $t=s+1$, $s=l'+1$, and $\Omega^{l'}M$ is of $(1,2)$-type. Thus, $\Omega^{s-1}M\cong \Omega^{-1}V(l,\l)$ for some $1\<l\<n-1$ and $\l\in I_l$, and so $M\cong \Omega^{-s}V(l,\l)$.

$(2)$ It is similar to (1) or dual to (1).
\end{proof}

\begin{lemma}\label{4.5}
For any $1\<l\<n-1$, $\l \in I_l$, we have the following Auslander-Reiten sequences in ${\rm mod}D(H_{\mathcal D})$:
\begin{enumerate}
\item[(1)] $0\rightarrow \Omega V(l,\l)\rightarrow V(n-l,\s(\l))\oplus V(n-l,\s^{-1}(\l))\oplus P(l, \l)\rightarrow \Omega^{-1}V(l,\l)\rightarrow 0$;
\item[(2)] $0\rightarrow \Omega^{t+2} V(l,\l)\rightarrow \Omega^{t+1}V(n-l,\s(\l))\oplus \Omega^{t+1}V(n-l,\s^{-1}(\l))\rightarrow \Omega^tV(l,\l)\rightarrow 0$;
\item[(3)] $0\rightarrow \Omega^{-t} V(l,\l)\rightarrow \Omega^{-(t+1)}V(n-l,\s(\l))\oplus \Omega^{-(t+1)}V(n-l,\s^{-1}(\l))\rightarrow \Omega^{-(t+2)}V(l,\l)\rightarrow 0$,
    \end{enumerate}
where $t\>0$ and $\Omega^0V(l,\l)=V(l,\l)$.
\end{lemma}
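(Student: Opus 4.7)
The key fact is that $D(H_{\mathcal D})$ is a symmetric algebra (already invoked in the proof of Proposition~\ref{3.10}), so the Auslander-Reiten translate coincides with $\Omega^2$ on the stable module category, and every non-projective indecomposable $M$ admits a unique AR-sequence of the form $0 \to \Omega^2 M \to E \to M \to 0$. All three sequences in the lemma fit this template: (1) ends at $\Omega^{-1}V(l,\l)$ with left term $\Omega V(l,\l) = \Omega^2\Omega^{-1}V(l,\l)$, (2) ends at $\Omega^t V(l,\l)$ with left term $\Omega^{t+2}V(l,\l)$, and (3) ends at $\Omega^{-(t+2)}V(l,\l)$ with left term $\Omega^{-t}V(l,\l)$. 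The plan is to establish (1) explicitly and then transport it along the auto-equivalence $\Omega$ of the stable category to obtain (2) and (3).

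For (1), Lemma~\ref{3.11} gives $\Omega^{-1}V(l,\l) \cong P(l,\l)/\mathrm{soc}\,P(l,\l)$, whose socle equals $V(n-l,\s(\l)) \oplus V(n-l,\s^{-1}(\l))$. I would define the right-hand map of (1) as the sum of the socle inclusion $V(n-l,\s(\l)) \oplus V(n-l,\s^{-1}(\l)) \hookrightarrow \Omega^{-1}V(l,\l)$ and the canonical projection $P(l,\l) \twoheadrightarrow \Omega^{-1}V(l,\l)$. A direct calculation using the standard basis of $P(l,\l)$ identifies its kernel with $\mathrm{rad}\,P(l,\l) = \Omega V(l,\l)$, giving the required short exact sequence; a dimension check ($4n - 2l = \dim\Omega V(l,\l) + \dim\Omega^{-1}V(l,\l)$) confirms this matches the expected AR-middle dimension, and non-splitness is clear because $\Omega^{-1}V(l,\l)$ is indecomposable non-projective of Loewy length two. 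Combined with the endpoint relation $\Omega V(l,\l) = \Omega^2\Omega^{-1}V(l,\l)$ and uniqueness of the AR-sequence, this shows that (1) is the AR-sequence ending at $\Omega^{-1}V(l,\l)$.

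Applying $\Omega^k$ to (1) then yields (2) for $k \ge 1$ (with $t = k-1$), and applying $\Omega^{-k}$ yields (3) for $k \ge 1$ (with $t = k-1$). Since $\Omega P(l,\l) = 0$, the projective summand in the middle of (1) is annihilated after the first application, and a dimension count at the $t = 0$ case of (2)---the middle has dimension $2(n+l) = \dim\Omega^2 V(l,\l) + \dim V(l,\l)$---confirms that no additional projective correction appears when lifting the stable-category triangle back to a short exact sequence in $\mathrm{mod}\,D(H_{\mathcal D})$. The same dimension check applies at every subsequent step (using that $\Omega^{\pm k}V(l,\l)$ and $\Omega^{\pm k}V(n-l,\s^{\pm 1}(\l))$ remain non-projective indecomposable of Loewy length two by Proposition~\ref{4.4}), so (2) and (3) follow in their stated form, and each inherits the AR-property from $\Omega$ being a stable auto-equivalence.

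The main obstacle is verifying the almost split property of (1) rather than merely exhibiting a non-split extension with the correct endpoints. I would resolve this by checking that the right-hand map is minimal right almost split: a non-retraction from an indecomposable $X$ into $\Omega^{-1}V(l,\l)$ factors through the socle inclusion when $X$ is simple (forced to be $V(n-l,\s^{\pm 1}(\l))$ by the structure of the socle), through the projective cover $P(l,\l)$ when $X$ has $\Omega^{-1}V(l,\l)$ in its head, and otherwise reduces by Lemma~\ref{4.3} to one of these cases via the Loewy-length classification. Alternatively, one could show $\dim\mathrm{Ext}^1(\Omega^{-1}V(l,\l), \Omega V(l,\l)) = 1$ so that the non-split extension class is unique and must coincide with the AR-class. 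Once (1) is established as almost split, the transport along $\Omega^{\pm 1}$ automatically preserves the AR-property throughout the Heller orbit.
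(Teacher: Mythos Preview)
Your approach is correct and supplies precisely what the paper omits: the paper's proof reads in its entirety ``It is similar to \cite[Theorem 3.17]{Ch3}.'' Your strategy---establish (1) directly, then transport along the stable auto-equivalence $\Omega$---is the standard one for self-injective algebras and is almost certainly what \cite{Ch3} does for the Taft double.

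One simplification resolves the obstacle you flag. The sequence in (1) is an instance of the general fact (see \cite[Proposition~V.5.5]{ARS}) that for any non-simple indecomposable projective-injective $P$ over an Artin algebra, the sequence
\[
0 \longrightarrow \mathrm{rad}\,P \longrightarrow (\mathrm{rad}\,P/\mathrm{soc}\,P)\oplus P \longrightarrow P/\mathrm{soc}\,P \longrightarrow 0
\]
is almost split. Taking $P = P(l,\l)$ and invoking Lemma~\ref{3.11}(2) gives (1) immediately, so neither the direct verification of the right-almost-split property nor the $\mathrm{Ext}^1$ computation is needed. For (2) and (3), your dimension argument that no projective correction appears is valid; alternatively, the same reference shows that an indecomposable projective summand can occur in the middle of an AR-sequence only when the right-hand term is $P'/\mathrm{soc}\,P'$ for some indecomposable projective $P'$, and a length count (the $\Omega^{\pm s}V(l,\l)$ for $s\neq 1$ have length $2s+1\neq 3$, while $\Omega V(l,\l)$ has non-simple head) rules this out for every term appearing on the right in (2) and (3).
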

\begin{proof}
It is similar to \cite[Theorem 3.17]{Ch3}.
\end{proof}

In what follows, we investigate the indecomposable $D(H_{\mathcal D})$-modules of $(t,t)$-type.  For any $D(H_{\mathcal D})$-module $M$ and $\l \in \Lambda$,
define
$$M_{\l}:=\{v\in M|(g\g)v=\l(g\g)v, g\g\in G\times\G\}.$$
Then $M_{\l}$ is a subspace of $M$.

\begin{lemma}\label{4.6}
Let $M$ be a $D(H_{\mathcal D})$-module.
\begin{enumerate}
\item[(1)] $M=\oplus_{\l \in \L}M_{\l}$ as vector spaces.
\item[(2)]If $\mathcal{D}$ is of nilpotent type, then $x M_{\l}\subseteq M_{\l\phi}$ for any $\l \in \L$, and hence $M^{x}=\oplus_{\l \in \L}(M^{x}\cap M_{\l})$.
    \item[(3)] If $\mathcal{D}$ is of non-nilpotent type, then $\xi M_{\l}\subseteq M_{\l\phi^{-1}}$ for any $\l \in \L$, and hence $M^{\xi}=\oplus_{\l \in \L}(M^{\xi}\cap M_{\l})$.
\item[(4)] If $K$ and $N$ are submodules of $M$ such that $M=K\oplus N$, then
$M_{\l}=K_{\l}\oplus N_{\l}$ for all $\l \in \L$.
\item[(5)] If $f: M\ra N$ is a $D(H_{\mathcal D})$-module map, then $f(M_{\l})\subseteq N_{\l}$ for all $\l\in\L$. Furthermore, if $f$ is surjective, then $f(M_{\l})=N_{\l}$ for all $\l\in\L$.
\end{enumerate}
\end{lemma}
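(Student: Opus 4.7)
The plan is to treat the five parts separately, building on the fact that the subalgebra $\Bbbk(G\times\Gamma)\subseteq\widehat{H^{*\rm cop}_{\mathcal D}}$ is a commutative semisimple group algebra, so that every $D(H_{\mathcal D})$-module decomposes as a direct sum of joint eigenspaces for $G\times\Gamma$.

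For (1), I would note that $G\times\Gamma$ is a finite abelian group and $\mathrm{char}\,\Bbbk=0$, so $\Bbbk(G\times\Gamma)\cong\prod_{\l\in\L}\Bbbk$ via the central idempotents $e_\l=\frac{1}{|G\times\Gamma|}\sum_{g\g\in G\times\G}\l(g\g)^{-1}g\g$. Restriction of the $D(H_{\mathcal D})$-action makes $M$ a $\Bbbk(G\times\Gamma)$-module, and then $M_\l=e_\l M$ with $M=\bigoplus_{\l\in\L}M_\l$.

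For (2) and (3), I would simply compute the action of a generic $g\g\in G\times\Gamma$ on $xv$, respectively $\xi v$, for $v\in M_\l$, using the commutation relations from Proposition \ref{2.1}. In the nilpotent case (2), from $xg=\chi(g)gx$ we get $gx=\chi(g)^{-1}xg$, and from $\g(a)x\g=\g x$ (Proposition \ref{2.1}(d)) we get $\g x=\g(a)x\g$, hence
\begin{equation*}
(g\g)(xv)=\chi(g)^{-1}\g(a)\,x(g\g)v=\chi(g)^{-1}\g(a)\l(g\g)\,xv=(\l\phi)(g\g)\,xv,
\end{equation*}
using that $\phi(g)=\chi^{-1}(g)$ on $G$ and $\phi(\g)=\g(a)$ on $\Gamma$. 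The analogous computation for (3) uses $\xi g=\chi^{-1}(g)g\xi$ (Proposition \ref{2.1}(b)) and $\xi\g=\g(a)\g\xi$ from the defining relations of $H^{*\rm cop}_{\mathcal D}$, giving weight $\l\phi^{-1}$ on $\xi v$. In both cases the second claim (decomposition of $M^x$ or $M^\xi$) follows because $x$, resp.\ $\xi$, preserves the weight decomposition and so its kernel decomposes accordingly.

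For (4), given $v\in M_\l$ with decomposition $v=k+n$ under $M=K\oplus N$, I would decompose $k=\sum_\mu k_\mu$ and $n=\sum_\mu n_\mu$ using (1) applied to $K$ and $N$, and then invoke uniqueness of the weight decomposition in $M$ to conclude $k_\mu=n_\mu=0$ for $\mu\neq\l$. For (5), the inclusion $f(M_\l)\subseteq N_\l$ is immediate from the definition of $M_\l$ and the $(G\times\Gamma)$-equivariance of $f$; surjectivity of $f$ combined with uniqueness of the weight decomposition in $N$ then upgrades this to an equality. None of the steps is a genuine obstacle; the only care required is being consistent with the commutation rules in parts (2) and (3), which is where a sign or inverse could easily slip.
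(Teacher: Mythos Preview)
Your proof is correct and is precisely the ``straightforward verification'' that the paper invokes without spelling out. The only point to watch is the one you already flagged: in (2) you must use relation (d) of Proposition~\ref{2.1}, which is only available in the nilpotent case, while in (3) the relations $\xi g=\chi^{-1}(g)g\xi$ and $\xi\g=\g(a)\g\xi$ hold unconditionally---and your computations handle this correctly.
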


\begin{proof}
It follows from a straightforward verification.
\end{proof}

Now let $m$ be the integer defined in Lemma \ref{3.3}(3). To classify all indecomposable modules, we consider the two cases $m=1$ and $m>1$.

\subsection{The case of $m>1$}\label{m>1}
~

Throughout this subsection, we assume that $m>1$.

Let $1\<l\<n-1$ and $\l\in I_l$. Let $\{v_0, v_1, \cdots, v_{n-1}, u_0, u_1, \cdots, u_{n-1}\}$ be a standard basis of $P(l,\l)$ as defined previously. Define two subspaces $T_1(l,\l), \overline{T}_1(l,\l)\subset P(l,\l)$ as follows:
\begin{itemize}
\item If $\mathcal D$ is of nilpotent type:
$$T_1(l,\l)={\rm span}\{v_l, \cdots, v_{n-1}, u_{n-l}, \cdots, u_{n-1}\},\
\overline{T}_1(l,\l)={\rm span}\{u_0, u_1, \cdots, u_{n-1}\}$$
 \item If $\mathcal D$ is of non-nilpotent type:
$$T_1(l,\l)={\rm span}\{u_0, u_1, \cdots, u_{n-1}\},\ \overline{T}_1(l,\l)={\rm span}\{v_0, \cdots, v_{n-l-1}, u_0, \cdots, u_{l-1}\}.$$
\end{itemize}

By the proof of Proposition \ref{3.10}, both $T_1(l,\l)$ and $\ol{T}_1(l,\l)$ are submodules  of $P(l,\l)$ of $(1,1)$-type.  Moreover,  if $T$ is any submodules  of $P(l,\l)$ of $(1,1)$-type,  then $T=T_1(l,\l)$ or $T=\ol{T}_1(l,\l)$. Furthermore, the socle and head of these submodules are given by
$$\begin{array}{l}
{\rm soc}T_1(l,\l)\cong{\rm soc}\ol{T}_1(l,\l)\cong V(l,\l),\\
{\rm head}T_1(l,\l)\cong V(n-l,\s(\l)), \\
{\rm head}\ol{T}_1(l,\l)\cong V(n-l,\s^{-1}(\l)).\\
\end{array}$$
The above observation immediately yield the following lemma.

\begin{lemma}\label{4.7}
Let $1\<l,l'\<n-1$ and $\l\in I_l, \mu \in I_{l'}$.
\begin{enumerate}
\item[(1)] $T_1(l,\l)\ncong\ol{T}_1(l',\mu)$.
\item[(2)] $T_1(l,\l)\cong T_1(l',\mu)$ if and only if $l=l'$ and $\l=\mu$.
\item[(3)] $\ol{T}_1(l,\l)\cong\ol{T}_1(l',\mu)$ if and only $l=l'$ and $\l=\mu$.
\end{enumerate}
\end{lemma}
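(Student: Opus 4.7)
The proof will follow directly from comparing socles and heads, both of which have been computed in the paragraph preceding the lemma: each of $T_1(l,\l)$ and $\ol T_1(l,\l)$ is of $(1,1)$-type, so both socle and head are simple modules of the explicit form $V(l,\l)$, $V(n-l,\s(\l))$ or $V(n-l,\s^{-1}(\l))$. These are rigid enough invariants to distinguish the modules in all three parts. The one genuinely non-formal ingredient is the standing assumption $m>1$, which via Remark \ref{3.4} forbids the equality $\s(\l)=\s^{-1}(\l)$.

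For parts (2) and (3), suppose $T_1(l,\l)\cong T_1(l',\mu)$. Since socles are isomorphism invariants, $V(l,\l)\cong V(l',\mu)$, and Proposition \ref{3.2}(1) forces $\l=\mu$; the indices then agree because $V(l,\l)$ has dimension $l$ (equivalently, the sets $I_l$ are pairwise disjoint). The converses are trivial, and the identical argument applied to $\ol T_1$ yields (3).

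For part (1), assume $T_1(l,\l)\cong\ol T_1(l',\mu)$. Comparing socles exactly as above first forces $l=l'$ and $\l=\mu$. Comparing heads then gives $V(n-l,\s(\l))\cong V(n-l,\s^{-1}(\l))$, and Proposition \ref{3.2}(1) yields $\s(\l)=\s^{-1}(\l)$, equivalently $\tau(\l)=\l$. Using the definition of $\s$ together with Corollary \ref{3.5}, one computes $\s(\l)=\l\phi^{d(\l)+1}=\l\phi^l$, and by Lemma \ref{3.3}(1) $d(\s(\l))=n-l-1$, so $\tau(\l)=\s(\l)\phi^{n-l}=\l\phi^n$. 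Hence $\phi^n=1$. But Remark \ref{3.4} gives ${\rm ord}(\phi)=mn$, so $\phi^n$ has order $m>1$, a contradiction.

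The only point requiring any care is the identification $\tau(\l)=\l\phi^n$ and the use of Remark \ref{3.4} to exploit $m>1$; everything else reduces to a routine check that isomorphic modules have isomorphic socles and heads, combined with the already-established classification of simples.
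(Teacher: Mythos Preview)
Your proof is correct and follows the same approach as the paper, which simply asserts that the lemma follows immediately from the socle and head computations stated just above it. You supply more detail than the paper on the one non-obvious point, namely why $\s(\l)\neq\s^{-1}(\l)$: your explicit computation $\tau(\l)=\l\phi^n$ combined with ${\rm ord}(\phi)=mn$ and $m>1$ is exactly what is needed, and the paper leaves this implicit.
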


\begin{remark}\label{4.8}
Let $1\<l\<n-1$ and $\l\in I_l$. The following isomorphisms hold:
$$\begin{array}{ll}
\Omega^{-1}T_1(l,\l)\cong T_1(n-l,\s^{-1}(\l)),& \Omega T_1(l,\l)\cong T_1(n-l,\s(\l)),\\
\Omega^{-1}\ol{T}_1(l, \l)\cong\ol{T}_1(n-l,\s(\l)),& \Omega\ol{T}_1(l,\l)\cong\ol{T}_1(n-l,\s^{-1}(\l)).\\
\end{array}$$
Consequently,
$$\Omega^2T_1(l,\l)\cong T_1(l,\tau(\l))\ \mathrm{and}\  \Omega^2\ol{T}_1(l,\l)\cong\ol{T}_1(l,\tau^{-1}(\l)).$$
Moreover,  ${\rm End}_{D(H_{\mathcal D})}(T_1(l,\l))\cong\Bbbk$, ${\rm End}_{D(H_{\mathcal D})}(\ol{T}_1(l,\l))\cong\Bbbk$,  and for all $k\in\mathbb Z$ with $m\nmid k$,
$${\rm Hom}_{D(H_{\mathcal D})}(T_1(l,\l),T_1(l,\tau^k(\l)))=0,\ {\rm Hom}_{D(H_{\mathcal D})}(\ol{T}_1(l,\l),\ol{T}_1(l,\tau^k(\l)))=0.$$

\end{remark}

Similarly to \cite[Theorems 4.18]{SunChen}, one can establish the following two propositions.

\begin{proposition}\label{4.9}
Let $1\<l\<n-1$, $\l\in I_l$,  and $ t\in \mathbb{Z}$ with $t\>1$. Then  there exists an indecomposable $D(H_{\mathcal D})$-module $T_t(l,\l)$ of $(t,t)$-type. This module satisfies the following properties:
\begin{enumerate}
\item[(1)] $\Omega^2T_t(l,\l)\cong T_t(l,\tau(\l))$,  and \\
${\rm soc}T_t(l,\l)\cong\oplus_{k=0}^{t-1}V(l,\tau^{-k}(\l)),
{\rm head}T_t(l,\l)\cong\oplus_{k=0}^{t-1}V(n-l,\tau^{-k}(\s(\l)))$.
\item[(2)] For every $1\<j<t$, the module $T_t(l,\l)$ contains a unique submodule of $(j,j)$-type, which is isomorphic to $T_j(l,\l)$. Moreover,  the corresponding quotient module satisfies
$$ T_t(l,\l)/T_j(l,\l) \cong T_{t-j}(l,\tau^{-j}(\l)).$$
\item[(3)] For every $1\<j<t$, the unique submodule of $(j,j)$-type of $T_t(l,\l)$ is contained in the unique submodule of $(j+1,j+1)$-type.
\item[(4)] There exist AR-sequences:
\begin{eqnarray*}
&0\ra T_1(l,\l)\xrightarrow{f_1}
T_2(l,\l)\xrightarrow{g_1}
T_1(l,\tau^{-1}(\l))\ra 0,\\
&0\ra T_t(l,\l)\xrightarrow{\left(\begin{matrix}
                                       g_{t-1} \\
                                       f_t \\
                                     \end{matrix}\right)}
T_{t-1}(l,\tau^{-1}(\l))\oplus T_{t+1}(l,\l)\xrightarrow{(f'_{t-1}, g_t)}
T_t(l,\tau^{-1}(\l))\ra 0,
\end{eqnarray*}
where $t\>2$.
\end{enumerate}
\end{proposition}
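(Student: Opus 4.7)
My plan is to argue by induction on $t \geq 1$. The base case $t = 1$ is the module $T_1(l, \lambda) \subset P(l, \lambda)$ defined before Lemma 4.7, whose basic properties (simple socle, simple head, $\Omega^2$-orbit) are recorded in Remark 4.8. For the inductive step, I would construct $T_t(l, \lambda)$ as the middle term of a non-split short exact sequence
$$0 \longrightarrow T_1(l, \lambda) \longrightarrow T_t(l, \lambda) \longrightarrow T_{t-1}(l, \tau^{-1}(\lambda)) \longrightarrow 0,$$
whose existence follows from self-injectivity of $D(H_{\mathcal D})$ together with the AR-sequence for $T_{t-1}(l, \tau^{-1}(\lambda))$ supplied by the inductive hypothesis; alternatively, $T_t(l, \lambda)$ may be exhibited concretely by gluing the standard bases of the projective modules $P(l, \tau^{-k}(\lambda))$, $0 \leq k \leq t-1$, along the inclusions of smaller $T_j$'s.

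With this construction in hand, the socle and head descriptions in (1) follow by applying $\mathrm{Hom}_{D(H_{\mathcal D})}(V(l',\lambda'), -)$ and $\mathrm{Hom}_{D(H_{\mathcal D})}(-, V(n-l,\lambda'))$ to the defining sequence, together with the inductive data, Corollary 4.2 and Lemma 3.3(4), which rule out extraneous simple constituents. For the isomorphism $\Omega^2 T_t(l, \lambda) \cong T_t(l, \tau(\lambda))$, I would apply $\Omega^2$ to the defining sequence, use $\Omega^2 T_1(l, \lambda) \cong T_1(l, \tau(\lambda))$ from Remark 4.8 and the inductive identification $\Omega^2 T_{t-1}(l, \tau^{-1}(\lambda)) \cong T_{t-1}(l, \lambda)$, and then verify that the resulting extension of $T_{t-1}(l, \lambda)$ by $T_1(l, \tau(\lambda))$ remains non-split, so it coincides with $T_t(l, \tau(\lambda))$. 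For parts (2) and (3), any $(j,j)$-type submodule $M \subseteq T_t(l, \lambda)$ has $\mathrm{soc}\, M$ a sum of $j$ of the pairwise non-isomorphic summands occurring in $\mathrm{soc}\, T_t(l, \lambda) = \bigoplus_{k=0}^{t-1} V(l, \tau^{-k}(\lambda))$, and the weight-space decomposition of Lemma 4.6 together with Lemma 3.3(4) and the chain structure forced by the defining extension pins down $M$ uniquely as the image of the inductive $T_j(l, \lambda)$; the quotient is then identified with $T_{t-j}(l, \tau^{-j}(\lambda))$ by comparing socle, head and length with the inductive hypothesis, and the nesting assertion (3) is immediate from the construction.

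Finally, part (4) is a consequence of Auslander-Reiten theory: since $D(H_{\mathcal D})$ is symmetric, the AR-translate equals $\Omega^2$, so (1) gives $\tau_{\mathrm{AR}}\, T_t(l, \tau^{-1}(\lambda)) = T_t(l, \lambda)$, and the two displayed sequences are the mesh relations at $T_1(l, \lambda)$ (sitting at the mouth of the tube) and at the higher $T_t(l, \lambda)$, their irreducible middle terms being read off from the submodule/quotient data in (2) and (3). The main technical obstacle I anticipate is the uniqueness argument in (2), together with the indecomposability of $T_t(l, \lambda)$ itself: the relevant $\mathrm{Ext}^1$ group may admit classes realising decomposable modules, so the extension class defining $T_t(l, \lambda)$ must be chosen carefully—traceable to the AR-sequence for $T_{t-1}(l, \tau^{-1}(\lambda))$—to guarantee that $T_t(l, \lambda)$ is a genuine indecomposable tube module of the prescribed type rather than splitting into shorter ones.
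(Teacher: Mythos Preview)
The paper does not supply a proof of this proposition; it simply states that ``similarly to \cite[Theorems 4.18]{SunChen}, one can establish the following two propositions.'' Your inductive scheme---building $T_t(l,\lambda)$ as a non-split extension of $T_{t-1}(l,\tau^{-1}(\lambda))$ by $T_1(l,\lambda)$, or alternatively by explicit gluing of standard bases inside $\bigoplus_{k=0}^{t-1}P(l,\tau^{-k}(\lambda))$---is exactly the standard tube construction one expects the cited reference to contain, and your derivation of (1)--(4) from that construction is sound in outline.

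One refinement worth noting: your proposed argument for the uniqueness in (2), namely that the weight-space decomposition ``pins down $M$ uniquely,'' is clean only when $t\leqslant m$, since for $t>m$ the socle summands $V(l,\tau^{-k}(\lambda))$ repeat with period $m$ and the weight spaces alone no longer distinguish them. In that range the uniqueness comes instead from the uniserial-in-the-tube structure: any $(j,j)$-type submodule must contain the bottom $T_1(l,\lambda)$ (because the only simple summand of ${\rm soc}\,T_t$ that supports a $(1,1)$-type submodule is the one at the foot of the chain), and then induction on $j$ using the quotient $T_t/T_1\cong T_{t-1}(l,\tau^{-1}(\lambda))$ forces the rest. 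This also resolves the indecomposability concern you flag at the end: rather than choosing an arbitrary non-split class in ${\rm Ext}^1$, one takes the AR-sequence ending in $T_{t-1}(l,\tau^{-1}(\lambda))$, identifies $T_{t-2}(l,\tau^{-2}(\lambda))$ as one summand of the middle term via the irreducible quotient map from the inductive step, and \emph{defines} $T_t(l,\lambda)$ to be the complementary summand---its indecomposability is then automatic from AR-theory.
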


\begin{proposition}\label{4.10}
 For any $1\<l\<n-1$, $\l\in I_l$ and $ t\in \mathbb{Z}$ with $t\>1$, there is an indecomposable $D(H_{\mathcal D})$-module $\ol{T}_t(l,\l)$ of $(t,t)$-type. We have the following properties:
\begin{enumerate}
\item[(1)] $\Omega^2\ol{T}_t(l,\l)\cong\ol{T}_t(l,\tau^{-1}(\l))$, ${\rm soc}\ol{T}_t(l,\l)\cong \oplus_{k=0}^{t-1}V(l,\tau^k(\l))$ and\\ ${\rm head}\ol{T}_t(l,\l)\cong \oplus_{k=0}^{t-1}V(n-l,\tau^{k-1}(\s(\l)))$.
\item[(2)] For every $1\<j<t$, $\ol{T}_t(l,\l)$ contains a unique submodule of $(j,j)$-type, which is isomorphic to $\ol{T}_j(l,\l)$ and the corresponding quotient module satisfies
 $$\ol{T}_t(l,\l)/\ol{T}_j(l,\l) \cong \ol{T}_{t-j}(l,\tau^j(\l)).$$
\item[(3)] For every $1\<j<t$, the unique submodule of $(j,j)$-type of $\ol{T}_t(l,\l)$ is contained in the unique submodule of $(j+1,j+1)$-type.
\item[(4)] There exist AR-sequences:
\begin{eqnarray*}
&0\ra\ol{T}_1(l,\l)\xrightarrow{f_1}\ol{T}_2(l,\l)\xrightarrow{g_1}
\ol{T}_1(l,\tau(\l))\ra 0,\\
&0\ra\ol{T}_{t}(l,\l)\xrightarrow{\left(\begin{matrix}
                                       g_{t-1} \\
                                       f_t \\
                                     \end{matrix}\right)}
\ol{T}_{t-1}(l,\tau(\l))\oplus\ol{T}_{t+1}(l,\l)\xrightarrow{(f'_{t-1}, g_t)}
\ol{T}_{t}(l,\tau(\l))\ra 0,
\end{eqnarray*}
where $t\>2$.
\end{enumerate}
\end{proposition}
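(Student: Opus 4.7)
The plan is to mirror the construction in Proposition \ref{4.9}, interchanging the roles of $\tau$ and $\tau^{-1}$ throughout (and $T_t$ with $\ol{T}_t$); this reflects the fact that $\ol{T}_1(l,\l)$ sits in $P(l,\l)$ with the opposite orientation to $T_1(l,\l)$, as recorded in Remark \ref{4.8}. The proof proceeds by induction on $t$. The base case $t=1$ is immediate from the definition of $\ol{T}_1(l,\l)$: property (1) is given by Remark \ref{4.8} together with the head computation recorded before Lemma \ref{4.7}, while properties (2) and (3) are vacuous.

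For the inductive step, I would define $\ol{T}_{t+1}(l,\l)$ as the middle term of the Auslander-Reiten sequence ending at $\ol{T}_t(l,\tau(\l))$. Since $D(H_{\mathcal D})$ is symmetric, the AR-translate equals $\Omega^2$, and the inductive form of (1) yields $\Omega^2\ol{T}_t(l,\tau(\l))\cong\ol{T}_t(l,\l)$, so such a sequence has the form
\begin{equation*}
0\to\ol{T}_t(l,\l)\to E\to\ol{T}_t(l,\tau(\l))\to 0.
\end{equation*}
Equivalently, $\ol{T}_{t+1}(l,\l)$ is constructed as the pullback of the canonical projection $\ol{T}_t(l,\tau(\l))\twoheadrightarrow\ol{T}_1(l,\tau(\l))$ (from the inductive filtration) along the epimorphism $\ol{T}_2(l,\l)\twoheadrightarrow\ol{T}_1(l,\tau(\l))$ of the base-case AR-sequence, giving
\begin{equation*}
0\to\ol{T}_1(l,\l)\to\ol{T}_{t+1}(l,\l)\to\ol{T}_t(l,\tau(\l))\to 0.
\end{equation*}
This realizes $\ol{T}_{t+1}(l,\l)$ as the unique indecomposable summand of $E$ of $(t+1,t+1)$-type.

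Indecomposability of $\ol{T}_{t+1}(l,\l)$ is proved by analyzing its endomorphism ring: Corollary \ref{4.2}(2) forces every endomorphism to preserve the socle filtration, and the weight decomposition of Lemma \ref{4.6} combined with the pairwise distinctness of $\tau^k(\l)$ for $0\<k<m$ (Lemma \ref{3.3}(4), using $m>1$) shows that the endomorphism ring modulo socle-valued maps is $\Bbbk$, hence local. Properties (2) and (3) are built into the construction; uniqueness of the $(j,j)$-type submodule follows because its socle must equal $\oplus_{k=0}^{j-1}V(l,\tau^k(\l))$, which is uniquely identified inside ${\rm soc}\ol{T}_{t+1}(l,\l)$ by the same weight-separation argument. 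Property (1) for $t+1$ is obtained by applying $\Omega^2$ to the defining extension, using Remark \ref{4.8} on the outer terms and the inductive hypothesis to identify the middle term with $\ol{T}_{t+1}(l,\tau^{-1}(\l))$. The AR-sequences in (4) for $t\>2$ follow by the mesh/tube argument used in Proposition \ref{4.9}, seeded by the $t=1$ sequence and the already established AR-sequences of Lemma \ref{4.5}.

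The main obstacle is the indecomposability proof together with the uniqueness of $(j,j)$-type submodules; both rely crucially on $m>1$ so that the characters $\tau^k(\l)$ are pairwise distinct and the weight decomposition of Lemma \ref{4.6} yields a canonical identification of the socle components. This weight-separation argument breaks down when $m=1$, which is precisely why the $m=1$ regime requires a separate treatment in the sequel.
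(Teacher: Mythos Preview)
Your proposal is correct and takes essentially the same approach as the paper. The paper itself gives no detailed proof, stating only that Propositions \ref{4.9} and \ref{4.10} are established ``similarly to \cite[Theorems 4.18]{SunChen}''; your inductive construction via AR-sequences, with the $\tau\leftrightarrow\tau^{-1}$ swap relative to Proposition \ref{4.9} and the weight-separation argument exploiting $m>1$, is precisely the kind of argument that reference contains.
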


\begin{corollary}\label{4.11}
Let $1\<l, l'\<n-1$, $\l\in I_l$, $\mu\in I_{l'}$ and $t, t'\in\mathbb{Z}$ with $t\>1$ and $t'\>1$. Then  the following isomorphisms hold:
\begin{enumerate}
\item[(1)] $T_t(l,\l)\ncong\ol{T}_{t'}(l',\mu)$.
\item[(2)] $T_t(l,\l)\cong T_{t'}(l',\mu)$ if and only if $t=t'$, $l=l'$ and $\l=\mu$.
\item[(3)] $\ol{T}_t(l,\l)\cong\ol{T}_{t'}(l',\mu)$ if and only if $t=t'$, $l=l'$ and $\l=\mu$.
\end{enumerate}
\end{corollary}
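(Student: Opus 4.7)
My plan is to reduce all three statements to Lemma~\ref{4.7} by exploiting the uniqueness of the $(1,1)$-type submodule inside $T_t(l,\l)$ and $\ol{T}_t(l,\l)$ supplied by Propositions~\ref{4.9}(2) and~\ref{4.10}(2).

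First I would note that the $(s,t)$-type is an isomorphism invariant of any indecomposable $D(H_{\mathcal D})$-module of Loewy length two, since it records the composition lengths of the head and the socle, both of which are preserved by isomorphism. Because $T_t(l,\l)$ is of $(t,t)$-type by Proposition~\ref{4.9} and $\ol{T}_{t'}(l',\mu)$ is of $(t',t')$-type by Proposition~\ref{4.10}, any hypothetical isomorphism appearing in (1)--(3) immediately forces $t=t'$.

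Next, for $t\ge 2$, Proposition~\ref{4.9}(2) (resp.\ Proposition~\ref{4.10}(2)) asserts that $T_t(l,\l)$ (resp.\ $\ol{T}_t(l,\l)$) contains a \emph{unique} submodule of $(1,1)$-type, and this submodule is isomorphic to $T_1(l,\l)$ (resp.\ $\ol{T}_1(l,\l)$). For $t=1$ the module itself is the only $(1,1)$-submodule, since a module of $(1,1)$-type has length two and therefore admits no proper non-simple submodule. Consequently, any isomorphism between two of these modules must carry the unique $(1,1)$-submodule of the source onto that of the target. Applying this in each case: for (1), one obtains $T_1(l,\l)\cong\ol{T}_1(l',\mu)$, contradicting Lemma~\ref{4.7}(1); for (2), one obtains $T_1(l,\l)\cong T_1(l',\mu)$, whence $l=l'$ and $\l=\mu$ by Lemma~\ref{4.7}(2); and for (3), one obtains $\ol{T}_1(l,\l)\cong\ol{T}_1(l',\mu)$, whence $l=l'$ and $\l=\mu$ by Lemma~\ref{4.7}(3). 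The converses in (2) and (3) are trivial.

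I do not anticipate any genuine obstacle here: all the heavy structural content has already been packaged into Propositions~\ref{4.9} and~\ref{4.10}. The only minor point to verify is the $t=1$ case, where Proposition~\ref{4.9}(2) is vacuous because its index range $1\le j<t$ is empty; but this is handled by the length count in the previous paragraph, and the whole argument thus goes through uniformly for all $t\ge 1$.
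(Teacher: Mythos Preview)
Your proposal is correct and follows essentially the same approach as the paper: the paper's proof also cites Lemma~\ref{4.7} together with Propositions~\ref{4.9}(2) and~\ref{4.10}(2), and uses the length count $l(T_t(l,\l))=2t$, $l(\ol{T}_{t'}(l',\mu))=2t'$ to force $t=t'$. Your write-up simply spells out the details more fully.
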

\begin{proof}
It follows from Lemma \ref{4.7}, Propositions \ref{4.9}(2) and \ref{4.10}(2) that
  $$l(T_t(l,\l))=2t\ \mathrm{and}\ l(\ol{T}_{t'}(l',\mu))=2t'.$$
\end{proof}

\begin{proposition}\label{4.12}
Let $M$ be an indecomposable $D(H_{\mathcal D})$-module of $(t,t)$-type. If $M$ contains a submodule of $(1,1)$-type, then $M\cong T_t(l,\l)$ or $M\cong\ol{T}_t(l,\l)$ for some $1\<l\<n-1$ and $\l \in I_l$.
\end{proposition}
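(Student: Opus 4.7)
My plan is to induct on $t$, using the almost-split sequences of Propositions 4.9(4) and 4.10(4) as the driving tool and the uniqueness of $(j,j)$-type submodules (Propositions 4.9(2) and 4.10(2)) as structural input.

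The base case $t=1$ is nearly immediate: here $M$ must coincide with the given $(1,1)$-type submodule. Since $\mathrm{soc}(M)$ is simple, Proposition 4.1 forces $I(M)\cong P(l,\l)$ for some $1\leq l\leq n-1$ and $\l\in I_l$, and the classification of $(1,1)$-submodules of $P(l,\l)$ recorded just before Lemma 4.7 identifies $M$ with $T_1(l,\l)$ or $\ol{T}_1(l,\l)$.

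For the inductive step with $t\geq 2$, let $N\subset M$ be the given $(1,1)$-submodule. By Lemma 4.7 the two cases $N\cong T_1(l,\l)$ and $N\cong\ol{T}_1(l,\l)$ are formally symmetric, so I focus on the former and aim for $M\cong T_t(l,\l)$. The inclusion $\iota:N\hookrightarrow M$ is not a split monomorphism, since $M$ is indecomposable and $l(N)=2<2t=l(M)$. Applying the AR-sequence
\[
0\to T_1(l,\l)\xrightarrow{f_1}T_2(l,\l)\xrightarrow{g_1}T_1(l,\tau^{-1}(\l))\to 0
\]
of Proposition 4.9(4), I obtain $h_2:T_2(l,\l)\to M$ with $h_2\circ f_1=\iota$. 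Assuming $h_2$ is injective (see below), $M$ contains a copy of $T_2(l,\l)$ extending $N$; iterating the same factorization principle through the AR-sequences at successive levels yields a chain of embeddings $T_1(l,\l)\subset T_2(l,\l)\subset\cdots\subset T_t(l,\l)\subseteq M$. Since $l(T_t(l,\l))=2t=l(M)$, the final inclusion is an equality, and $M\cong T_t(l,\l)$. The parallel argument with Proposition 4.10 handles the case $N\cong\ol{T}_1(l,\l)$ and produces $M\cong\ol{T}_t(l,\l)$.

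The main obstacle, and the technical heart of the proof, is the injectivity of $h_2$ and of the analogous maps at each level of the iteration. The relation $h_2\circ f_1=\iota$ already gives $\ker(h_2)\cap f_1(T_1(l,\l))=0$, and by the uniqueness statement of Proposition 4.9(2) any nonzero kernel must be disjoint from the unique $(1,1)$-submodule of $T_2(l,\l)$. Using the $\L$-weight decomposition of Lemma 4.6 to split $\mathrm{soc}(T_2(l,\l))\cong V(l,\l)\oplus V(l,\tau^{-1}(\l))$ by characters, I would confine any nonzero $\ker(h_2)$ to the summand $V(l,\tau^{-1}(\l))$ disjoint from $\mathrm{soc}(N)$. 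If this kernel were nonzero, then $h_2$ would descend to an embedding $T_2(l,\l)/V(l,\tau^{-1}(\l))\hookrightarrow M$, which by Proposition 4.4 exhibits an indecomposable submodule of $(2,1)$-type isomorphic to some $\Omega V(n-l,\s(\l))$; a comparison of its head and socle characters with those permitted on $M$ by Corollary 4.2, combined with the already-fixed inclusion $N\subset M$, is expected to yield a contradiction. The inductive promotion from an embedding $T_j(l,\l)\hookrightarrow M$ to an embedding $T_{j+1}(l,\l)\hookrightarrow M$ runs on the same template, with the AR-sequence for $T_j(l,\l)$ replacing the one above and the weight-tracking argument applied to the two-component socle of the middle term.
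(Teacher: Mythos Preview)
Your overall strategy (iterate the AR-sequences of Propositions \ref{4.9}(4) and \ref{4.10}(4) to extend a $(1,1)$-type submodule to a $T_t$ or $\ol{T}_t$) is exactly the approach the paper intends: it cites \cite[Theorem 4.16]{Ch3}, whose proof runs on the same template. The base case $t=1$ is handled correctly.

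However, your injectivity argument has a genuine gap. You correctly reduce to $\ker(h_2)=V(l,\tau^{-1}(\l))$ and identify the image $Q:=h_2(T_2(l,\l))$ as a $(2,1)$-type submodule of $M$ with simple socle $V(l,\l)$; but the identification should be $Q\cong\Omega V(l,\l)=\mathrm{rad}\,P(l,\l)$ (not $\Omega V(n-l,\s(\l))$ as you wrote --- check the socle). More importantly, the ``comparison of head and socle characters with Corollary \ref{4.2}'' does \emph{not} yield a contradiction: the head of $Q$ is $V(n-l,\s(\l))\oplus V(n-l,\s^{-1}(\l))$, and since $\s^{-1}(\l)=\tau^{-1}\s(\l)$, both summands are of the form $V(n-l,\tau^k\s(\l))$ permitted by Corollary \ref{4.2}(1). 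The correct argument is different: since $Q$ has simple socle $V(l,\l)$, one may choose the decomposition $I(M)=P_0\oplus P'$ with $P_0\cong P(l,\l)$ so that $Q\subseteq P_0$; then $Q=\mathrm{rad}\,P_0$ (as $M\subseteq\mathrm{rad}\,I(M)$), and the projection $I(M)\to P_0$ restricts to a retraction $M\to Q$, forcing $Q$ to be a direct summand of $M$ --- contradicting indecomposability for $t\geq 2$.

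A second point you leave unaddressed: for $j\geq 2$ the AR-sequence has middle term $T_{j-1}(l,\tau^{-1}(\l))\oplus T_{j+1}(l,\l)$, so the factorization of $\iota_j$ gives $\iota_j=\psi_1 g_{j-1}+\psi_2 f_j$, not $\psi_2 f_j=\iota_j$ directly. You must argue that $\psi_2:T_{j+1}(l,\l)\to M$ is still injective (or that an injective map $T_{j+1}(l,\l)\hookrightarrow M$ can be extracted). This step requires the same style of kernel analysis together with the uniqueness statements of Proposition \ref{4.9}(2,3), and should be made explicit.
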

\begin{proof}
If $t=1$, then $M$ is of $(1,1)$-type. Hence there exist $1\<l\<n-1$ and $\l\in I_l$ such that ${\rm soc}M\cong V(l,\l)$. Consequently,  $P(l,\l)$ serves as  an injective envelope of $M$. Thus, $M$ is isomorphic to a submodule of $P(l,\l)$. It follows that $M\cong T_1(l,\l)$ or $M\cong\ol{T}_1(l,\l)$.

Now let $t>1$ and assume that $M$ contains a submodule $N$ of $(1,1)$-type. Then as shown above, $N\cong T_1(l,\l)$ or $N\cong\ol{T}_1(l,\l)$ for some $1\<l\<n-1$ and $\l \in I_l$. By an argument similar to the proof of \cite[Theorem 4.16]{Ch3} we have $M\cong T_t(l,\l)$ or $M\cong\ol{T}_t(l,\l)$.
\end{proof}

Let $1\<l\<n-1$, $\l\in I_l$ and $P=\oplus_{k=0}^{m-1}P(l,\tau^k(\l))$.
 For any $0\<k\<m-1$, let
$\{v_0^k, v_1^k, \cdots, v_{n-1}^k, u_0^k, u_1^k, \cdots, u_{n-1}^k\}$ be a standard basis of $P(l,\tau^k(\l))$. Then $\{v^k_j,u^k_j|0\<j\<n-1, 0\<k\<m-1\}$ forms a basis of $P$.

Assume that $\mathcal{D}$ is of {\bf nilpotent type}.
Let $\eta\in{\Bbbk}^{\times}$. For all $0\<j\<n-1$ and $0\<k\<m-1$, define $x^k_j\in P$ by $x^k_j=u^k_j$ if $n-l\<j\<n-1$; $x^k_j=v^k_{j+l}+u^{k+1}_j$ if $0\<j\<n-l-1$ and $0\<k<m-1$; $x^{m-1}_j=v^{m-1}_{j+l}+\eta u^0_j$ if $0\<j\<n-l-1$ and $k=m-1$. A straightforward verification shows that
\begin{equation*}
\begin{array}{ll}
(g\g)x^k_j=\left\{
\begin{array}{ll}
(\phi^{j+l}\tau^k(\l))(g\g)x^k_j,& 0\<j\<n-l-1, 0\<k\<m-1,\\
(\phi^{j-n+l}\tau^k(\l))(g\g)x^k_j,& n-l\<j\<n-1, 0\<k\<m-1,\\
\end{array}\right.\\
\end{array}
\end{equation*}
\begin{equation*}
\begin{array}{ll}
\vspace{0.2 cm}
xx^k_j=\left\{
\begin{array}{ll}
x^k_{j+1},& 0\<j\<n-l-2, 0\<k\<m-1,\\
x^{k+1}_{n-l},& j=n-l-1, 0\<k<m-1,\\
\eta x^{0}_{n-l},& j=n-l-1, k=m-1,\\
x^k_{j+1},& n-l\<j\<n-2, 0\<k\<m-1,\\
0,& j=n-1, 0\<k\<m-1,\\
\end{array}\right.&\\
\xi x^k_j=\left\{
\begin{array}{ll}
x^k_{n-1}, & j=0, 0\<k\<m-1,\\
\a_j(n-l,\s(\l))x^k_{j-1},& 1\<j\<n-l-1, 0\<k\<m-1,\\
0, & j=n-l, 0\<k\<m-1,\\
\a_{j-n+l}(l,\l)x^k_{j-1},& n-l+1\<j\<n-1, 0\<k\<m-1,\\
\end{array}\right.\\
\end{array}
\end{equation*}
where $g\g\in G\times\G$.
It is easy to see that ${\rm span}\{x_j^k|0\<j\<n-1, 0\<k\<m-1\}$ is a submodule of rad$P$, denoted by $M_1(l,\l,\eta)$.
Clearly, $\{x_j^k|0\<j\<n-1, 0\<k\<m-1\}$ is a $\Bbbk$-basis of $M_1(l, \l,\eta)$. Such a basis is called a standard basis of $M_1(l,\l,\eta)$. Note that $\a_j(n-l,\s(\tau^k(\l)))=\a_j(n-l,\s(\l))$ and $\a_j(l,\tau^k(\l))=\a_j(l,\l)$, $j\>1$.

Assume that $\mathcal{D}$ is of {\bf non-nilpotent type}. Let $\eta\in{\Bbbk}^{\times}$. For any $0\<j\<n-1$ and $0\<k\<m-1$, define $x_j^k\in P$
by $x_j^k=u_j^k$ if $0\<j\<l-1$; $x_j^k=u_j^k+v_{j-l}^{k+1}$ if $l\<j\<n-1$ and $0\<k<m-1$;  $x_j^{m-1}=u_j^{m-1}+\eta v_{j-l}^0$ if $l\<j\<n-1$ and $k=m-1$.
A straightforward verification shows  that
\begin{eqnarray*}
\eqlabel{e2} &(g\g)x_j^k=(\phi^j\tau^k(\l))(g\g)x_j^k,\ g\g\in G\times\G, 0\<j\<n-1, 0\<k\<m-1,
\end{eqnarray*}
\begin{eqnarray*}
&x x_j^k=\left\{\begin{array}{ll}
\a_{j+1}(l,\l)x_{j+1}^k,& 0\<j\<l-2, 0\<k\<m-1,\\
0,& j=l-1, 0\<k\<m-1,\\
\a_{j+1-l}(n-l,\s(\l))x_{j+1}^k,& l\<j\<n-2, 0\<k\<m-1,\\
z_{l,\l}x_0^k+x_0^{k+1},& j=n-1, 0\<k<m-1,\\
z_{l,\l}x_0^{m-1}+\eta x_0^0,& j=n-1, k=m-1,\\
\end{array}\right.\\
&\xi x_j^k=\left\{\begin{array}{ll}
0,& j=0, 0\<k\<m-1,\\
x_{j-1}^k,& 1\<j\<n-1, 0\<k\<m-1.\\
\end{array}\right.
\end{eqnarray*}
This implies that ${\rm span}\{x_j^k|0\<j\<n-1, 0\<k\<m-1\}$ is a submodule of $P$, denoted by $M_1(l,\l,\eta)$.
Clearly, $\{x_j^k|0\<j\<n-1, 0\<k\<m-1\}$ is a $\Bbbk$-basis of $M_1(l,i,\eta)$. Such a basis is a {\it standard basis} of $M_1(l,\l,\eta)$. Note that $z_{l,\tau^k(\l)}=z_{l,\l}$ for any integer $k$.

\begin{lemma}\label{4.13}
Let $1\<l\<n-1$, $\l\in I_l$, and $\eta\in\Bbbk^{\times}$. We have the following:
\begin{enumerate}
\item[(1)] ${\rm rl}(M_1(l,\l,\eta))=2$, ${\rm soc}M_1(l,\l,\eta)\cong\oplus_{k=0}^{m-1}V(l,\tau^k(\l))$ and\\
  ${\rm head}M_1(l,\l,\eta)\cong\oplus_{k=0}^{m-1}V(n-l,\tau^k(\s(\l)))$.
\item[(2)] $M_1(l,\l,\eta)$ is an indecomposable module of $(m,m)$-type.
\item[(3)] $M_1(l,\l,\eta)$ does not contain any submodule of $(1,1)$-type.
\item[(4)] $M_1(l,\l,\eta)\ncong T_m(l',\mu)$ and $M_1(l,\l,\eta)\ncong\ol{T}_m(l',\mu)$ for any $1\<l'\<n-1$ and $\mu \in I_{l'}$.
\end{enumerate}
\end{lemma}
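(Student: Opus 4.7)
The plan is to treat the four parts by reducing everything to explicit calculations with the standard basis $\{x_j^k\}$ and the weight-space decomposition of Lemma \ref{4.6}. For (1), I would first verify that
$$
S:={\rm span}\{x_j^k : n-l\<j\<n-1,\ 0\<k\<m-1\}
$$
(with range $0\<j\<l-1$ in the non-nilpotent case) is a submodule directly from the action tables, and then match each strip $\{x_{n-l+i}^k\}_{i=0}^{l-1}$ against the natural basis of $V(l,\tau^k(\l))$ from Section \ref{s3.1}, using $\a_i(l,\tau^k(\l))=\a_i(l,\l)$; this gives $S\cong\oplus_{k=0}^{m-1}V(l,\tau^k(\l))$. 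To upgrade this to $S={\rm soc}\,M_1(l,\l,\eta)$, I would compute $\dim M_1(l,\l,\eta)^\xi=m$ in the nilpotent case (respectively $\dim M_1(l,\l,\eta)^{\xi}=m$ also in the non-nilpotent case, where the only basis vectors annihilated by $\xi$ are the $x_0^k$) and apply Corollary \ref{3.7} to the semisimple module ${\rm soc}\,M_1(l,\l,\eta)$. The quotient $M_1(l,\l,\eta)/S$ is then identified by a parallel basis match as $\oplus_{k=0}^{m-1}V(n-l,\tau^k(\s(\l)))$, which yields both the head and ${\rm rl}(M_1(l,\l,\eta))=2$.

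For (2), I would compute the endomorphism ring. The decisive observation is that the $mn$ basis vectors $x_j^k$ are weight vectors with pairwise distinct weights: the exponents of $\phi$ that appear exhaust $\{0,1,\ldots,mn-1\}$, and $\phi$ has order $mn$ by Remark \ref{3.4}. Hence by Lemma \ref{4.6}(5), any $f\in{\rm End}_{D(H_{\mathcal D})}(M_1(l,\l,\eta))$ acts diagonally, $f(x_j^k)=c_j^k x_j^k$. The $x$-relations force $c_j^k$ to be constant in $j$ within each of the two index-blocks; the cross-block gluing $xx_{n-l-1}^k=x_{n-l}^{k+1}$ (and $=\eta x_{n-l}^0$ at $k=m-1$) chains the block scalars cyclically over $k$; and $\xi x_0^k=x_{n-1}^k$ identifies the two block scalars at each $k$. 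Together these relations force $f$ to be scalar, so ${\rm End}_{D(H_{\mathcal D})}(M_1(l,\l,\eta))\cong\Bbbk$ and $M_1(l,\l,\eta)$ is indecomposable of $(m,m)$-type by (1). The non-nilpotent case is handled analogously with the roles of $x$ and $\xi$ interchanged.

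For (3), suppose $T\subseteq M_1(l,\l,\eta)$ is of $(1,1)$-type. Then $T$ is cyclic with simple head isomorphic to some $V(n-l,\tau^{k_0}(\s(\l)))$. Since the weights appearing in $S$ and in $M_1(l,\l,\eta)/S$ are disjoint and all weight spaces are one-dimensional, any weight-vector generator of $T$ must, after subtracting a socle element, be a scalar multiple of $x_j^{k_0}$ for some $0\<j\<n-l-1$. Iterated $x$-action carries $x_j^{k_0}$ through the $k_0$-block and across the gluing into the $(k_0{+}1)$-block, reaching $x_{n-1}^{k_0+1}$; iterated $\xi$-action brings $x_j^{k_0}$ down to $x_0^{k_0}$ and then up to $x_{n-1}^{k_0}$. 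Hence ${\rm soc}\langle x_j^{k_0}\rangle$ meets both $V(l,\tau^{k_0}(\l))$ and $V(l,\tau^{k_0+1}(\l))$ non-trivially, so has length at least two, contradicting $(1,1)$-type. Claim (4) then follows immediately: by Propositions \ref{4.9}(2) and \ref{4.10}(2) (with $j=1$) each of $T_m(l',\mu)$ and $\ol{T}_m(l',\mu)$ contains a $(1,1)$-type submodule, while (3) forbids this in $M_1(l,\l,\eta)$.

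The main obstacle I anticipate is the scalar bookkeeping in step (2): one has to thread the cyclic closure introduced by $\eta$ together with the ``upper/lower'' coupling from $\xi x_0^k=x_{n-1}^k$ to conclude that no non-scalar endomorphism survives, and the non-nilpotent case requires a separate, mirrored calculation in which the ``cycle'' is driven by the relation $xx_{n-1}^k = z_{l,\l}x_0^k+x_0^{k+1}$ (with the $\eta$-twist at $k=m-1$) instead.
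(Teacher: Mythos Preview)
Your proposal is correct. Parts (3) and (4) match the paper's argument essentially verbatim. For part (1), the paper takes a shorter route: since $M_1(l,\l,\eta)$ is by construction a submodule of $P=\oplus_{k=0}^{m-1}P(l,\tau^k(\l))$ containing ${\rm soc}\,P$, one has ${\rm soc}\,M_1(l,\l,\eta)={\rm soc}\,P$ immediately, and the containment $M_1(l,\l,\eta)\subseteq{\rm rad}\,P$ gives ${\rm rl}\<2$; your direct computation via Corollary~\ref{3.7} works but is more laborious.

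For part (2) the approaches genuinely diverge. The paper argues directly on decompositions: if $M_1(l,\l,\eta)=N\oplus K$, the one-dimensionality of the weight spaces forces each $x_j^k$ into $N$ or $K$, and then chaining $x^{n-1}x_0^k$ and $\xi x_0^{k+1}$ (nilpotent case) shows all generators $x_0^k$ lie in the same summand. You instead compute ${\rm End}_{D(H_{\mathcal D})}(M_1(l,\l,\eta))\cong\Bbbk$ by tracking diagonal scalars through the same relations. Both exploit the same structural facts; your route is slightly longer but yields Corollary~\ref{4.18} as a byproduct, which the paper only obtains later from the proof of Proposition~\ref{4.17}. The ``scalar bookkeeping'' you flag as an obstacle is in fact straightforward: in the nilpotent case the $x$-action equalizes scalars within each of the two $j$-blocks and identifies $a_k$ with $b_{k+1}$ across blocks, while $\xi x_0^k=x_{n-1}^k$ gives $a_k=b_k$; together these force all scalars equal, and the $\eta$-twist cancels harmlessly. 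The non-nilpotent case is simpler since $\xi$ already equalizes all $c_j^k$ for fixed $k$.
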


\begin{proof}
(1) Since $M_1(l,\l,\eta)\subseteq{\rm rad}P$, ${\rm rl}(M_1(l,\l,\eta))\<2$. However, ${\rm soc}M_1(l,\l,\eta)={\rm soc}P\cong \oplus_{k=0}^{m-1}V(l,\tau^k(\l))$, hence ${\rm rl}(M_1(l,\l,\eta))\neq 1$. Therefore,  ${\rm rl}(M_1(l,\l,\eta))=2$. Moreover, ${\rm head}M_1(l,\l,\eta)\cong\oplus_{k=0}^{m-1}V(n-l,\tau^k(\s(\l)))$.

(2) {\bf Case 1:} $\mathcal{D}$ is  of {\bf nilpotent type}. Assume that  $M_1(l,\l,\eta)=N\oplus K$ for some submodules $N$ and $K$ of $M_1(l,\l,\eta)$. Then for any $\mu\in \L$,  we have
$$M_1(l,\l,\eta)_{\mu}=N_{\mu}\oplus K_{\mu}.$$
 By the $D(H_{\mathcal D})$-module action, one can see that $x^k_j\in M_1(l,\l,\eta)_{\l\phi^{j+l+kn}}$ for $0\<j\<n-l-1$ and $0\<k\<m-1$,  $x^k_j\in M_1(l,\l,\eta)_{\l\phi^{j-n+l+kn}}$ for $n-l\<j\<n-1$ and $0\<k\<m-1$.

 Since ${\rm ord}(\phi)=mn$, dim$ M_1(l,\l,\eta)_{\l\phi^{j+kn}}=1$ for $0\<j\<n-1$ and $0\<k\<m-1$. Therefore,  each $x^k_j$ belongs either to $N$ or to $ K$. In particular, $x^0_0\in N$ or $x^0_0\in K$. Without loss of  generality, we may assume $x^0_0\in N$. Then $x^{n-1}x^0_0=x^1_{n-1}\in N$.

 If $x^1_0\in K$,  then $\xi x^1_0=x^1_{n-1}\in K$, which is a contradiction. Therefore $x^{1}_0\in N$. Similarly, one can verify that $x^2_0, \cdots, x^{m-1}_0\in N$. However, $M_1(l,\l,\eta)=\langle x^0_0, x^1_0, \cdots, x^{m-1}_0\rangle$, which implies that  $N=M_1(l,\l,\eta)$. Therefore,  $M_1(l,\l,\eta)$ is indecomposable. By the result in (1), it follows that $M_1(l,\l,\eta)$ is of $(m,m)$-type.

 {\bf Case 2:} $\mathcal{D}$ is  of {\bf non-nilpotent type}.
Suppose $M_1(l,\l,\eta)=N\oplus K$ for some submodules $N$ and $K$ of $M_1(l,\l,\eta)$. Then  for any $\mu\in \L$, we have
$$M_1(l,\l,\eta)_{\mu}=N_{\mu}\oplus K_{\mu}.$$
 By the module action, one sees that $x^k_j\in M_1(l,\l,\eta)_{\l\phi^{j+nk}}$ for $0\<j\<n-1$ and $0\<k\<m-1$. Moreover, dim$ M_1(l,\l,\eta)_{\l\phi^{j+nk}}=1$ for $0\<j\<n-1$,  $0\<k\<m-1$. Therefore,  each $x^k_j$ belongs either to $N$ or to $K$. In particular, $x^0_{n-1}\in N$ or $x^0_{n-1}\in K$. Without loss of  generality, we may assume $x^0_{n-1}\in N$. Then $xx^0_{n-1}=z_{l,\l}x^0_0+x^1_0\in N$ , however $\xi^{n-1}x^0_{n-1}=x^0_0\in N$. Consequently, $x^1_0\in N$. If $x^1_{n-1}\in K$ then $\xi^{n-1} x^1_{n-1}=x^1_{0}\in K$, a contradiction. Therefore, $x^{1}_{n-1}\in N$. Similarly, one can verify that $x^2_{n-1},x^3_{n-1},\cdots,x^{m-1}_{n-1}\in N$.

 However, $M_1(l,\l,\eta)=\langle x^0_{n-1},x^1_{n-1},\cdots , x^{m-1}_{n-1}\rangle$, which implies that $N=M_1(l,\l,\eta)$. Hence,  $M_1(l,\l,\eta)$ is indecomposable. By the result in  (1),  it follows that  $M_1(l,\l,\eta)$ is of $(m,m)$-type.

(3) Let $\pi: M_1(l,\l,\eta)\ra {\rm head}M_1(l,\l,\eta)$ be the canonical epimorphism.

{\bf Case 1:} Suppose that $\mathcal{D}$ is of {\bf nilpotent type}. Assume, for contradiction, that $M_1(l,\l,\eta)$ contains a submodule $N$ of $(1,1)$-type.
 By (1),  the image $\pi(N)$ is a simple submodule of ${\rm head}M_1(l,\l,\eta)$, so  $\pi(N)\cong  V(n-l,\tau^k(\s(\l)))$ for some $0\<k\<m-1$. Consequently,
$$\pi(N)=\oplus_{j=l}^{n-1}\pi(N)_{\l\phi^{kn+j}}\ \mathrm{and}\ {\rm dim}(\pi(N)_{\l\phi^{kn+j}})=1, \mbox{for all}\  l\<j\<n-1.$$
By Lemma \ref{4.6}(5), $\pi(N)_{\l\phi^{kn+j}}=\pi(N_{\l\phi^{kn+j}})$, implying $N_{\l\phi^{kn+j}}\neq 0$. From the argument in (2), we have
$$N_{\l\phi^{kn+j}}=M_1(l,\l,\eta)_{\l\phi^{kn+j}}=\Bbbk x^k_{j-l}, \ l\<j\<n-1,$$ so in particular, $x_0^k\in N$, and  thus $\langle x^k_0\rangle\subseteq N$.  But  ${\rm soc}\langle x^k_0\rangle\cong V(l,\tau^k(\l))\oplus V(l,\tau^{k+1}(\l))$, while  $l({\rm soc}N)=1$, a contradiction. Therefore, $M_1(l,\l,\eta)$  contains no submodule of $(1,1)$-type.

{\bf Case 2:} Suppose that $\mathcal{D}$ is of {\bf non-nilpotent type}.
Assume, for contradiction, that that $M_1(l,\l,\eta)$ contains a submodule $N$ of $(1,1)$-type.  By (1), the image $\pi(N)$ under the canonical projection  is a simple submodule of ${\rm head}M_1(l,\l,\eta)$,  so $\pi(N)\cong  V(n-l,\tau^k(\s(\l)))$ for some $0\<k\<m-1$. It follows that
$$\pi(N)=\oplus_{j=l}^{n-1}\pi(N)_{\l\phi^{kn+j}}\  \mathrm{and} \ {\rm dim}(\pi(N)_{\l\phi^{kn+j}})=1, \mbox{for  all}\   l\<j\<n-1.$$
By Lemma \ref{4.6}(5), $\pi(N)_{\l\phi^{kn+j}}=\pi(N_{\l\phi^{kn+j}})$, and hence $N_{\l\phi^{kn+j}}\neq 0$. By the proof of (2),  we have
$$N_{\l\phi^{kn+j}}=M_1(l,\l,\eta)_{\l\phi^{kn+j}}=\Bbbk x^k_j\ \mathrm{for}\  l\<j\<n-1,$$ so in particular, $x_{n-1}^k\in N$, and therefore $\langle x^k_{n-1}\rangle\subseteq N$. It is easy to see that ${\rm soc}\langle x^k_{n-1}\rangle\cong$$V(l,\tau^k(\l))\oplus V(l,\tau^{k+1}(\l))$, whereas $l({\rm soc}N)=1$, a contradiction. Therefore, $M_1(l,\l,\eta)$ does not contain any submodule of $(1,1)$-type.

(4) This follows from (3) and Propositions \ref{4.9}(2) and \ref{4.10}(2).
\end{proof}

\begin{lemma}\label{4.14}
Let $1\<l\<n-1$, $\l \in I_l$ and $\eta\in\Bbbk^{\times}$. Then:
\begin{enumerate}\eqlabel{e6}
\item[(1)]  $\O^{-1}M_1(l,\l,\eta)\cong M_1(n-l, \s^{-1}(\l), (-1)^m\eta).$
\item[(2)] $\O^{-2}M_1(l,\l,\eta)\cong M_1(l, \tau^{-1}(\l), \eta)$ and $\O^2M_1(l,\l,\eta)\cong M_1(l, \tau(\l), \eta)$.
\end{enumerate}
\end{lemma}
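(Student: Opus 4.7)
The plan is to prove (1) by an explicit computation of the cosyzygy and then to deduce (2) by iteration. The starting point is that the injective envelope of $M_1(l,\lambda,\eta)$ is $P := \bigoplus_{k=0}^{m-1}P(l,\tau^k(\lambda))$. Indeed, by Lemma \ref{4.13}(1), ${\rm soc}\,M_1(l,\lambda,\eta) \cong \bigoplus_{k=0}^{m-1} V(l,\tau^k(\lambda)) \cong {\rm soc}\,P$, and inspection of the standard basis of $M_1(l,\lambda,\eta)$ shows that the embedding $M_1(l,\lambda,\eta) \hookrightarrow P$ contains ${\rm soc}\,P$. Since $D(H_{\mathcal D})$ is symmetric, $P$ is injective, so $\Omega^{-1}M_1(l,\lambda,\eta) \cong P/M_1(l,\lambda,\eta)$.

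Next, I would describe this quotient concretely. In the nilpotent case, the defining relations of $M_1(l,\lambda,\eta) \subseteq P$ give in $P/M_1(l,\lambda,\eta)$ the identifications $\bar u_j^k = 0$ for $n-l \le j \le n-1$, $\bar u_j^{k+1} = -\bar v_{j+l}^k$ for $0 \le j \le n-l-1$ and $0 \le k < m-1$, and $\bar u_j^0 = -\eta^{-1}\bar v_{j+l}^{m-1}$ for $0 \le j \le n-l-1$. Consequently $\{\bar v_j^k : 0 \le j \le n-1,\ 0 \le k \le m-1\}$ is a $\Bbbk$-basis of the $nm$-dimensional quotient, and the $D(H_{\mathcal D})$-action on it is obtained by pushing the known formulas for $P(l,\tau^k(\lambda))$ through these identifications. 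The non-nilpotent case runs in parallel, using the other form of the standard basis of $P(l,\tau^k(\lambda))$ together with the gluing relation $xx_{n-1}^{m-1}=z_{l,\lambda}x_0^{m-1}+\eta x_0^0$.

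To match the quotient with the target, I would note that $\mu := \sigma^{-1}(\lambda) \in I_{n-l}$ (Corollary \ref{3.5}), and a short calculation using $\lambda(\chi) = \lambda(a)\rho^{1-l}$ yields $\mu(\chi) = \sigma(\lambda)(\chi)$ and $\mu(a) = \sigma(\lambda)(a)$, so the $\alpha$-coefficients appearing in the two relevant standard actions coincide. By Lemma \ref{4.6}, every weight space on either side of the proposed isomorphism is one-dimensional, so a candidate $\psi: P/M_1(l,\lambda,\eta) \to M_1(n-l,\mu,(-1)^m\eta)$ is determined by nonzero scaling constants $c_{j,k}$ at each basis element. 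Compatibility with $x$ forces $c_{j,k}$ to be constant on each of the two $j$-blocks $[0,l-1]$ and $[l,n-1]$, with the transition between blocks trivial except at $k = m-1$, where the target gluing parameter enters as $c_{l,m-1} = ((-1)^m\eta)\,c_{l-1,m-1}$. Compatibility with $\xi$ on $\bar v_0^k$ then gives the cross-level recursion $c_{0,k} = -c_{n-1,k-1}$, accumulating a factor of $(-1)^m$ as one traverses the cycle. Closing the cycle via $\bar u_j^0 = -\eta^{-1}\bar v_{j+l}^{m-1}$ produces a consistency equation that forces precisely the target gluing parameter to be $(-1)^m\eta$. This sign-tracking around the cyclic gluing is the main technical obstacle of the proof.

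For (2), applying (1) twice and using $\sigma^{-2} = \tau^{-1}$ (Lemma \ref{3.3}(3)) together with $((-1)^m)^2 = 1$,
\[
\Omega^{-2}M_1(l,\lambda,\eta) \cong \Omega^{-1}M_1(n-l,\sigma^{-1}(\lambda),(-1)^m\eta) \cong M_1(l,\tau^{-1}(\lambda),\eta).
\]
Applying $\Omega^2$ to the corresponding isomorphism obtained by replacing $\lambda$ with $\tau(\lambda)$ then yields $\Omega^2 M_1(l,\lambda,\eta) \cong M_1(l,\tau(\lambda),\eta)$.
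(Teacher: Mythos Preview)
Your proposal is correct and follows essentially the same route as the paper: compute the cosyzygy as $P/M_1(l,\lambda,\eta)$ with $P=\bigoplus_{k=0}^{m-1}P(l,\tau^k(\lambda))$, pass to the quotient via the gluing identifications, and exhibit a basis on which the action matches that of $M_1(n-l,\sigma^{-1}(\lambda),(-1)^m\eta)$, then iterate for (2). The paper streamlines your ``sign-tracking'' step by writing the rescaled basis directly as $y_j^k=(-1)^k\bar v_j^k$ (and $y_j^k=(-1)^k\bar u_{j-l}^k$ on the second block in the nilpotent case), which makes the appearance of the factor $(-1)^m$ at $k=m-1$ immediate; in the non-nilpotent case it also records the identity $y_{l,\lambda}=z_{n-l,\sigma^{-1}(\lambda)}$ to match the constant term.
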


\begin{proof}
With the notations above, let $P=\oplus_{k=0}^{m-1}P(l,\tau^k(\l))$ denote the injective envelope of $M_1(l,\l,\eta)$, so that $M_1(l,\l,\eta)\subseteq P$. For any $y\in P$, let $\ol{y}$ denote its image  under the canonical epimorphism $P\ra P/M_1(l,\l,\eta)$.

 {\bf Case 1:}  Suppose that $\mathcal{D}$ is of {\bf nilpotent type}.
First, we have $\ol{v_{j+l}^k}=-\ol{u_j^{k+1}}$ for $0\<j\<n-l-1$ and $0\<k<m-1$, and $\ol{v_{j+l}^{m-1}}=-\eta\ol{u_j^0}$ for $0\<j\<n-l-1$.
Define elements $y^k_j\in P/M_1(l,\l,\eta)$ for $0\<j\<n-1$ and $0\<k\<m-1$ by
$$y_j^k:=\left\{ \begin{array}{ll}
(-1)^k\ol{v_j^k}, & \mathrm{if}\  0\<j\<l-1,\\
(-1)^k\ol{u_{j-l}^k}, &  \mathrm{if}\ l\<j\<n-1
\end{array}\right.
 $$
It is easy to see that  $\{y_j^k|0\<j\<n-1, 0\<k\<m-1\}$ forms a basis of $P/M_1(l,\l,\eta)$. A straightforward verification shows that for any $g\g\in G\times\G$,
\begin{equation*}
(g\g)y^k_j=\left\{
\begin{array}{ll}
(\phi^{j+n-l}\tau^k(\s^{-1}(\l)))(g\g)y^k_j,& 0\<j\<l-1, 0\<k\<m-1,\\
(\phi^{j-l}\tau^k(\s^{-1}(\l)))(g\g)y^k_j,& l\<j\<n-1, 0\<k\<m-1,\\
\end{array}\right.
\end{equation*}
\begin{equation*}
\begin{array}{ll}
\vspace{0.2 cm}
xy^k_j=\left\{
\begin{array}{ll}
y^k_{j+1},& 0\<j\<l-2, 0\<k\<m-1,\\
y^{k+1}_l,& j=l-1,0\<k<m-1,\\
(-1)^m\eta y^{0}_{l},& j=l-1,k=m-1,\\
y^k_{j+1},& l\<j\<n-2,0\<k\<m-1,\\
0,& j=n-1,0\<k\<m-1,\\
\end{array}\right.&\\
\vspace{0.2 cm}
\xi y^k_j=\left\{
\begin{array}{ll}
y^k_{n-1}, & j=0,0\<k\<m-1,\\
\a_{j}(l,\l)y^k_{j-1},& 1\<j\<l-1,0\<k\<m-1,\\
0, & j=l,0\<k\<m-1,\\
\a_{j-l}(n-l,\s^{-1}(\l))y^k_{j-1},& l+1\<j\<n-1,0\<k\<m-1.\\
\end{array}\right.\\
\end{array}
\end{equation*}

It follows that $\O^{-1}M_1(l,\l,\eta)\cong P/M_1(l,\l,\eta)\cong M_1(n-l,\s^{-1}(\l),(-1)^m\eta)$.

{\bf Case 2:} $\mathcal{D}$ is of {\bf non-nilpotent type}.
In $P/M_1(l,\l,\eta)$, we have $\ol{u^k_j}=-\ol{v^{k+1}_{j-l}}$ for $l\<j\<n-1$ and $0\<k<m-1$, and $\ol{u^{m-1}_j}=-\eta\ol{v^0_{j-l}}$ for $l\<j\<n-1$.
Now let $y_j^k:=(-1)^k\ol{v_j^k}$ for all $0\<j\<n-1$ and $0\<k\<m-1$. Then $\{y_j^k|0\<j\<n-1, 0\<k\<m-1\}$ is a basis of $P/M_1(l,i,\eta)$. A straightforward verification shows that
\begin{eqnarray*}
&(g\g)y_j^k=(\phi^{j-n+l}\tau^k(\l))(g\g)y_j^k,\ 0\<j\<n-1, 0\<k\<m-1,\\
&x y_j^k=\left\{\begin{array}{ll}
\a_{j+1}(n-l,\s^{-1}(\l))y_{j+1}^k,& 0\<j\<n-l-2, 0\<k\<m-1,\\
0,& j=n-l-1, 0\<k\<m-1,\\
\a_{j+1-n+l}(l,\l)y_{j+1}^k,& n-l\<j\<n-2, 0\<k\<m-1,\\
y_{l,\l}y_0^k+y_0^{k+1},& j=n-1, 0\<k<m-1,\\
y_{l,\l}y_0^{m-1}+(-1)^m\eta y_0^0,& j=n-1, k=m-1,\\
\end{array}\right.\\
&\xi y_j^k=\left\{\begin{array}{ll}
0,& j=0, 0\<k\<m-1,\\
y_{j-1}^k,& 1\<j\<n-1, 0\<k\<m-1.\\
\end{array}\right.
\end{eqnarray*}
Hence $\O^{-1}M_1(l,\l,\eta)\cong P/M_1(l,\l,\eta)\cong M_1(n-l,\s^{-1}(\l),(-1)^m\eta)$ by $y_{l,\l}=z_{n-l,\s^{-1}(\l)}$ for $0\<k\<m-1$.
This shows (1). (2) follows from (1).
\end{proof}

\begin{lemma}\label{4.15}
Let $1\<l\<n-1$, $\l\in I_l$ and $\eta\in\Bbbk^{\times}$. Then
$$M_1(l,\l,\eta)\cong M_1(l,\tau(\l),\eta).$$
\end{lemma}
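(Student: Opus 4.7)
The plan is to construct an explicit module isomorphism between $M_1(l,\l,\eta)$ and $M_1(l,\tau(\l),\eta)$ by cyclically shifting their injective envelopes. Both modules sit inside abstractly isomorphic direct sums of projectives: $M_1(l,\l,\eta)\subseteq P:=\bigoplus_{k=0}^{m-1}P(l,\tau^k(\l))$ and $M_1(l,\tau(\l),\eta)\subseteq P':=\bigoplus_{k=0}^{m-1}P(l,\tau^{k+1}(\l))$, where the indices on the right are read modulo $m$ since $\tau^m=\mathrm{id}$. Thus the summands of $P'$ are the same as those of $P$, merely cyclically relabelled, and my first step is to fix a module isomorphism $\varphi:P'\to P$ that sends the $k$-th component of $P'$ to the $(k+1\bmod m)$-th component of $P$, scaled on each component by a constant $c_k\in\Bbbk^{\times}$ yet to be determined.

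The heart of the argument is to choose the $c_k$ so that $\varphi$ carries the standard basis $\{y_j^k\}$ of $M_1(l,\tau(\l),\eta)$ onto scalar multiples of the standard basis $\{x_j^k\}$ of $M_1(l,\l,\eta)$. The pure-block generators (those that lie in a single summand) impose no constraint on the $c_k$. The mixed-block generators, which straddle two adjacent summands, force the compatibility relations $c_k=c_{k+1}$ for $0\le k\le m-3$, while the two wraparound generators — the $\eta$-twisted basis vector inside $M_1(l,\l,\eta)$ at index $k=m-1$, and the $\eta$-twisted basis vector inside $M_1(l,\tau(\l),\eta)$ that $\varphi$ shifts back to index $k=0$ — together force the single additional relation $c_{m-1}=\eta c_0$. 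Setting $c_0=\cdots=c_{m-2}=1$ and $c_{m-1}=\eta$ satisfies every constraint simultaneously, and then a direct check from the module-action formulas shows that $\varphi$ restricts to an isomorphism $M_1(l,\tau(\l),\eta)\xrightarrow{\sim} M_1(l,\l,\eta)$.

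Since the nilpotent and non-nilpotent standard bases have the same combinatorial shape — pure-block generators in one range of indices, mixed generators in another, and an $\eta$-twist occurring only at the wraparound $k=m-1$ — the same assignment of $c_k$'s handles both cases verbatim. The step I would verify most carefully is the bookkeeping that the cyclic shift really produces only these two relations with no hidden inconsistency, in particular that the two sources of the relation $c_{m-1}=\eta c_0$ (one from $k=m-2$, one from $k=m-1$) genuinely coincide. This parallels the mechanism used in the preceding lemma to compute $\Omega^{\pm 2}M_1(l,\l,\eta)$, but here no sign enters because the identification is a direct cyclic shift rather than a syzygy quotient.
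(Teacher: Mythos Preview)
Your approach is correct and amounts to the same construction as the paper's, just framed at the level of the ambient projective hulls rather than directly on the standard bases. The paper simply writes down the linear map $f:M_1(l,\tau(\l),\eta)\to M_1(l,\l,\eta)$ given by $f(y_j^k)=x_j^{k+1}$ for $0\le k\le m-2$ and $f(y_j^{m-1})=\eta x_j^0$, and then verifies the module relations directly; your choice $c_0=\cdots=c_{m-2}=1$, $c_{m-1}=\eta$ produces exactly this $f$ as the restriction of $\varphi$.
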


\begin{proof}
Let $\{x_j^k|0\<j\<n-1, 0\<k\<m-1\}$ and $\{y_j^k|0\<j\<n-1, 0\<k\<m-1\}$ be the standard bases of $M_1(l,\l,\eta)$ and $M_1(l,\tau(\l),\eta)$, respectively. Define a linear map $f: M_1(l,\tau(\l),\eta)\ra M_1(l,\l,\eta)$ by
\begin{eqnarray*}
f(y_j^k)=\left\{\begin{array}{ll}
x_j^{k+1}, & 0\<j\<n-1, 0\<k<m-1,\\
\eta x_j^0, & 0\<j\<n-1, k=m-1.\\
\end{array}\right.
\end{eqnarray*}
Then $f$ is a linear isomorphism. By a straightforward verification, one can check that
for any $g\in G$, $\g\in \G$, $0\<j\<n-1$ and $0\<k\<m-1$,
\begin{eqnarray*}
f((g\g)y_j^k)=(g\g)f(y_j^k), f(xy_j^k)=xf(y_j^k), f(\xi y_j^k)=\xi f(y_j^k).
\end{eqnarray*}
Hence $f$ is a $D(H_{\mathcal D})$-module isomorphism.
This completes the proof.
\end{proof}

\begin{corollary}\label{4.16}
Let $1\<l\<n-1$, $\l\in I_l$ and $\eta\in\Bbbk^{\times}$. Then
\begin{eqnarray*}
\O^2M_1(l,\l,\eta)\cong M_1(l, \l, \eta)\cong\O^{-2}M_1(l,\l,\eta).
\end{eqnarray*}
\end{corollary}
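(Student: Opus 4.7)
The plan is to deduce Corollary 4.16 directly by chaining the two immediately preceding results, Lemma 4.14(2) and Lemma 4.15, with no new computation required. The content of the corollary is essentially that the $\tau$-shift introduced by applying $\Omega^{\pm 2}$ to $M_1(l,\l,\eta)$ is invisible at the level of isomorphism classes.

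First I would apply Lemma 4.14(2), which gives the isomorphisms
\[
\Omega^2 M_1(l,\l,\eta)\cong M_1(l,\tau(\l),\eta),\qquad
\Omega^{-2} M_1(l,\l,\eta)\cong M_1(l,\tau^{-1}(\l),\eta).
\]
Next, Lemma 4.15 yields $M_1(l,\mu,\eta)\cong M_1(l,\tau(\mu),\eta)$ for every $\mu\in I_l$ and $\eta\in\Bbbk^\times$. Applying this with $\mu=\l$ gives $M_1(l,\tau(\l),\eta)\cong M_1(l,\l,\eta)$, and applying it with $\mu=\tau^{-1}(\l)$ gives $M_1(l,\tau^{-1}(\l),\eta)\cong M_1(l,\l,\eta)$. (Note that $\tau^{-1}(\l)$ still lies in $I_l$, since $\tau$ preserves each $I_l$ by Lemma 3.3 and Corollary 3.5, so Lemma 4.15 is indeed applicable to this character.) Composing the two chains of isomorphisms yields both halves of the desired statement.

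There is no real obstacle here; the only thing worth double-checking is that the hypotheses of Lemma 4.15 are in force for $\tau^{-1}(\l)$, which is immediate from the fact that $\tau$ is a bijection on $I_l$. The proof is therefore a one-line concatenation of Lemma 4.14(2) and Lemma 4.15.
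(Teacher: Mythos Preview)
Your proof is correct and follows exactly the paper's own argument, which simply states that the corollary follows from Lemma~\ref{4.14}(2) and Lemma~\ref{4.15}. Your added remark that $\tau^{-1}(\l)\in I_l$ is a harmless clarification of a hypothesis the paper leaves implicit.
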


\begin{proof}
It follows from Lemma \ref{4.14}(2) and Lemma \ref{4.15}.
\end{proof}

\begin{proposition}\label{4.17}
Let $1\<l, l'\<n-1$, $\l \in I_l, \l'\in I_{l'}$ and $\eta, \eta'\in\Bbbk^{\times}$. Then
$$M_1(l,\l,\eta)\cong M_1(l', \l', \eta') \Leftrightarrow l=l', \eta=\eta' \text{ and } \l'=\tau^k(\l)\text{ for some $k\in \mathbb{N}$ }.$$
\end{proposition}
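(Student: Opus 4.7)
The plan is to handle the two directions separately, with the ``only if'' direction carrying the content. The ``if'' direction is immediate from iterating Lemma~\ref{4.15}: if $l=l'$, $\eta=\eta'$, and $\l'=\tau^k(\l)$ for some $k$, then $M_1(l,\l,\eta)\cong M_1(l,\tau(\l),\eta)\cong\cdots\cong M_1(l,\tau^k(\l),\eta)=M_1(l',\l',\eta')$.

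For the ``only if'' direction, suppose $M_1(l,\l,\eta)\cong M_1(l',\l',\eta')$. I would first extract $l$ and the $\tau$-orbit of $\l$ from the socle. By Lemma~\ref{4.13}(1), ${\rm soc}M_1(l,\l,\eta)\cong\oplus_{k=0}^{m-1}V(l,\tau^k(\l))$ and similarly for the other side. Since an isomorphism induces an isomorphism of socles, Proposition~\ref{3.6} together with Lemma~\ref{3.3}(4) forces $l=l'$ and $\{\tau^k(\l)\}_{k}=\{\tau^k(\l')\}_{k}$ as sets, so $\l'=\tau^j(\l)$ for some $j\>0$. Applying Lemma~\ref{4.15} $j$ times gives $M_1(l,\l',\eta')\cong M_1(l,\l,\eta')$, reducing the problem to the implication $M_1(l,\l,\eta)\cong M_1(l,\l,\eta')\Rightarrow\eta=\eta'$.

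For this reduction, I would invoke the weight space decomposition with respect to $G\times\G$. The computations in the proof of Lemma~\ref{4.13}(2) show that every weight space $M_1(l,\l,\eta)_{\l\phi^r}$ is one-dimensional and spanned by a specific standard basis element $x_j^k$. By Lemma~\ref{4.6}(5), any isomorphism $f:M_1(l,\l,\eta)\ra M_1(l,\l,\eta')$ must respect weight spaces, and hence $f(x_j^k)=c_j^k y_j^k$ for some scalars $c_j^k\in\Bbbk^{\times}$, where $\{y_j^k\}$ is the standard basis of $M_1(l,\l,\eta')$. Compatibility with the $\xi$-action (which shifts the index $j$ within a fixed copy $k$ by scalars independent of $\eta$) forces all the $c_j^k$ with fixed $k$ to coincide, say $c_j^k=c^k$. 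Next, the ``bridge'' relations in the $x$-action at $k<m-1$, which connect consecutive copies without involving $\eta$, force $c^0=c^1=\cdots=c^{m-1}$. Finally, the unique ``wraparound'' relation from the $(m-1)$-st copy back to the $0$-th copy carries $\eta$ on the left and $\eta'$ on the right, yielding $\eta c^0=\eta' c^{m-1}$ and hence $\eta=\eta'$.

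The main obstacle will be executing the scalar tracking cleanly in parallel through the nilpotent and non-nilpotent cases, since the relations carrying $\eta$ are structurally different (namely $xx_{n-l-1}^{m-1}=\eta x_{n-l}^0$ in the nilpotent case versus $xx_{n-1}^{m-1}=z_{l,\l}x_0^{m-1}+\eta x_0^0$ in the non-nilpotent case), and the nonzero coefficients $\a_i(l,\l)$ and $\a_i(n-l,\s(\l))$ in the $\xi$-action must be handled without introducing new freedom. In both situations, however, the one-dimensional weight-space decomposition forces $f$ to be diagonal in the standard bases, and $\eta$ appears as the only free scalar in a single wraparound relation, making the identification $\eta=\eta'$ inevitable.
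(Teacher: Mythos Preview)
Your proposal is correct and follows essentially the same approach as the paper: reduce to $l=l'$ and $\l'=\tau^k(\l)$ via the socle and Lemma~\ref{4.15}, then use the one-dimensional weight spaces (Lemma~\ref{4.6}(5) and the proof of Lemma~\ref{4.13}(2)) to make $f$ diagonal in the standard bases, propagate the diagonal scalars via the $\xi$- and $x$-relations not involving $\eta$, and finally extract $\eta=\eta'$ from the unique wraparound relation. The only cosmetic difference is the order of bookkeeping: the paper fixes $f(x_0^k)=\b_k y_0^k$ (nilpotent case) or $f(x_{n-1}^k)=\b_k y_{n-1}^k$ (non-nilpotent case) and pushes forward via $x$ (resp.\ $\xi$) before closing with the remaining relation, whereas you first equalize all $c_j^k$ within a copy via $\xi$ and then bridge copies via $x$; both orderings yield the same identity $\eta c^0=\eta' c^{m-1}$.
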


\begin{proof}
If $l=l'$, $\eta=\eta'$ and $\l'=\tau^k(\l)\text{ for some $k\in \mathbb{N}$ }$, then by Lemma \ref{4.15},  we have
$$M_1(l,\l,\eta)\cong M_1(l', \l', \eta').$$
Conversely, assume that $M_1(l,\l,\eta)\cong M_1(l', \l', \eta')$. Then
$${\rm soc}M_1(l,\l,\eta)\cong{\rm soc}M_1(l', \l', \eta').$$
By Lemma \ref{4.13}(1), we have $V(l,\tau^k(\l))\cong V(l',\l')$ for some $0\<k\<m-1$. It follows from Proposition \ref{3.2}(1) or Proposition \ref{3.6} that
$l=l'$ and $\l'=\tau^k(\l)$. Hence,  by Lemma \ref{4.15}, $M_1(l',\l',\eta')\cong M_1(l, \l, \eta')$, and consequently $M_1(l,\l,\eta)\cong M_1(l, \l, \eta')$.

Let $f: M_1(l,\l,\eta)\ra M_1(l, \l, \eta')$ be a module isomorphism.
Denote by
$$\{x_j^k|0\<j\<n-1, 0\<k\<m-1\}\ \mathrm{and} \ \{y_j^k|0\<j\<n-1, 0\<k\<m-1\}$$
 the standard bases of $M_1(l,\l,\eta)$ and $M_1(l, \l, \eta')$, respectively.

If $\mathcal{D}$ is of {\bf nilpotent type}, then by
the proof of Lemma \ref{4.13}(2),
$$M_1(l,\l,\eta)_{\l\phi^{j+l+nk}}=\Bbbk x_j^k, \ \mathrm{and} \ M_1(l,\l,\eta')_{\l\phi^{j+l+nk}}=\Bbbk y_j^k$$
 for $0\<j\<n-l-1$ and $0\<k\<m-1$, and
 $$M_1(l,\l,\eta)_{\l\phi^{j-n+l+nk}}=\Bbbk x_j^k\ \mathrm{and} \ M_1(l,\l,\eta')_{\l\phi^{j-n+l+nk}}=\Bbbk y_j^k$$
 for $n-l\<j\<n-1$ and $0\<k\<m-1$.
Hence by Lemma \ref{4.6}(5),
there exist scalars $\b_0, \b_1, \cdots, \b_{m-1}\in\Bbbk^{\times}$ such that $$f(x_0^k)=\b_ky_0^k\ \mbox{for all}\ 0\<k\<m-1.$$
For any $1\<j\<n-l-1$ and $0\<k\<m-1$, we have
$$f(x_j^k)=f(x^jx^k_0)=x^jf(x^k_0)=x^j(\b_ky_0^k)=\b_ky^k_j.$$
In particular, $f(x^k_{n-l-1})=\b_ky^k_{n-l-1}$.   From $f(xx^k_{n-l-1})=xf(x^k_{n-l-1})$, it follows that $f(x^k_{n-l})=\b_{k-1}y^k_{n-l}$ for all $1\<k\<m-1$ and $f(x^0_{n-l})=\b_{m-1}\eta^{-1}\eta'y^0_{n-l}$. Therefore, for any $n-l\<j\<n-1$ and $0\<k\<m-1$,
$$f(x^k_j)=f(x^{j-n+l}x^k_{n-l})=x^{j-n+l}f(x^k_{n-l})=
\left\{
\begin{array}{ll}
\b_{k-1}y^k_j, & 1\<k\<m-1,\\
\b_{m-1}\eta^{-1}\eta'y^0_j, &  k=0.
\end{array}\right. $$

Finally, from $f(\xi x_0^k)=\xi f(x_0^k)$, we obtain $\b_k=\b_{k-1}$ for all $1\<k\<m-1$ and $\b_0=\b_{m-1}\eta^{-1}\eta'$. Consequently,  $\eta=\eta'$.

Now assume that $\mathcal{D}$ is of {\bf non-nilpotent type}.
By the proof of Lemma \ref{4.13}(2), we have
$$M_1(l,\l,\eta)_{\l\phi^{j+nk}}=\Bbbk x_j^k\ \mathrm{and}\ M_1(l,\l,\eta')_{\l\phi^{j+nk}}=\Bbbk y_j^k$$
 for all $0\<j\<n-1$ and $0\<k\<m-1$.  Hence,  by Lemma \ref{4.6}(5), there exist scalars $\b_0, \b_1, \cdots, \b_{m-1}\in\Bbbk^{\times}$ such that $f(x_{n-1}^k)=\b_ky_{n-1}^k$ for all $0\<k\<m-1$. For any $0\<j\<n-1$ and $0\<k\<m-1$, we have
$$f(x^k_j)=f({\xi}^{n-1-j}x_{n-1}^k)={\xi}^{n-1-j}f(x_{n-1}^k)
={\xi}^{n-1-j}(\b_ky_{n-1}^k)=\b_ky_j^k.$$
Moreover,  from $f(xx^k_{n-1})=xf(x^k_{n-1})$ for all $0\<k\<m-1$, it follows that
 $\b_0=\b_1=\cdots=\b_{m-1}$ and $\eta=\eta'$.
\end{proof}

\begin{corollary}\label{4.18}
Let $1\<l<n$, $\l \in I_l$ and $\eta\in\Bbbk^{\times}$. Then
$${\rm End}_{D(H_{\mathcal D})}(M_1(l,\l,\eta))\cong\Bbbk.$$
\end{corollary}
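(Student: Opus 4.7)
The plan is to adapt the weight-space argument used in the proof of Proposition \ref{4.17} to the endomorphism setting. Let $f\in{\rm End}_{D(H_{\mathcal D})}(M_1(l,\l,\eta))$ and let $\{x_j^k\mid 0\<j\<n-1,\ 0\<k\<m-1\}$ be a standard basis of $M_1(l,\l,\eta)$. Since $M_1(l,\l,\eta)$ is indecomposable (Lemma \ref{4.13}(2)), its endomorphism algebra is local, so it suffices to pin down the $\Bbbk$-dimension by showing every $f$ is a scalar multiple of the identity.

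First I would invoke Lemma \ref{4.6}: because ${\rm ord}(\phi)=mn$, each weight space $M_1(l,\l,\eta)_{\mu}$ is either zero or one-dimensional, and the nonzero weight spaces are exactly the lines $\Bbbk x_j^k$, as computed in the proof of Lemma \ref{4.13}(2). Since $f$ is a $D(H_{\mathcal D})$-module map it preserves each weight space, so there exist uniquely determined scalars $c_j^k\in\Bbbk$ with $f(x_j^k)=c_j^k x_j^k$ for all $j,k$. This reduces the problem to a finite system of linear relations in the $c_j^k$.

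Next I would exploit the defining $x$- and $\xi$-actions on the standard basis to propagate equalities among the $c_j^k$. In the nilpotent case, the relations $xx_j^k=x_{j+1}^k$ (for $0\<j\<n-l-2$ and for $n-l\<j\<n-2$) force $c_0^k=c_1^k=\cdots=c_{n-l-1}^k$ and $c_{n-l}^k=c_{n-l+1}^k=\cdots=c_{n-1}^k$, while $xx_{n-l-1}^k=x_{n-l}^{k+1}$ for $0\<k<m-1$ and the twisted relation $xx_{n-l-1}^{m-1}=\eta x_{n-l}^0$ link the scalars across different $k$ (with $\eta$ cancelling on both sides, so no mismatch occurs). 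Finally the relation $\xi x_0^k=x_{n-1}^k$ identifies the two column values at each $k$. Combining these gives $c_j^k=c$ for a common $c\in\Bbbk$ independent of $j,k$, so $f=c\cdot{\rm id}$. The non-nilpotent case is handled in exactly the same spirit, using the simpler monomial relations $\xi x_j^k=x_{j-1}^k$ (for $1\<j\<n-1$) to collapse each strand and then $xx_{n-1}^k=z_{l,\l}x_0^k+x_0^{k+1}$ (respectively $z_{l,\l}x_0^{m-1}+\eta x_0^0$ at $k=m-1$) to identify the $c^k$ across all $k$ and to cancel $\eta$.

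The main obstacle is the bookkeeping in the non-nilpotent case: the equation $xx_{n-1}^k=z_{l,\l}x_0^k+x_0^{k+1}$ is not monomial, so applying $f$ and comparing weight components requires one to split the identity into its homogeneous parts (the $z_{l,\l}x_0^k$ and $x_0^{k+1}$ summands lie in different weight spaces) and then conclude $c_{n-1}^k=c_0^k$ and $c_{n-1}^k=c_0^{k+1}$ separately. Once this careful splitting is done, the cyclic chain of equalities closes up modulo $m$ exactly as in Proposition \ref{4.17}, and the final $\eta$-twisted relation at $k=m-1$ is automatically consistent, yielding ${\rm End}_{D(H_{\mathcal D})}(M_1(l,\l,\eta))=\Bbbk\cdot{\rm id}\cong\Bbbk$.
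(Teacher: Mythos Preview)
Your proposal is correct and takes essentially the same approach as the paper: the paper's proof of Corollary \ref{4.18} simply states that it follows from the proof of Proposition \ref{4.17}, and what you have written is precisely an explicit unpacking of that weight-space argument in the endomorphism setting.
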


\begin{proof}
It follows from the proof of Proposition \ref{4.17}.
\end{proof}

\begin{lemma}\label{4.19}
Let $M$ be an indecomposable $D(H_{\mathcal D})$-module of $(t,t)$-type. Assume that $M$  contains no submodule of $(1,1)$-type. Then $M$ contains a submodule isomorphic to $M_1(l,\l,\eta)$ for some $1\<l<n$, $\l \in I_l$ and $\eta\in\Bbbk^{\times}$.
\end{lemma}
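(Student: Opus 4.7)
The plan is to reduce the problem to a linear algebra statement on multiplicity spaces in the injective envelope, and then extract a one-dimensional subrepresentation of the associated quiver data from an eigenvector of the wrap-around composition.

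By Proposition \ref{4.1} and Corollary \ref{4.2}, there exist $1\<l\<n-1$ and $\l\in I_l$ with $I(M)\cong\oplus_{k=0}^{m-1}s_kP(l,\tau^k(\l))$, ${\rm soc}\,M\cong\oplus_{k=0}^{m-1}s_kV(l,\tau^k(\l))$, and ${\rm head}\,M\cong\oplus_{k=0}^{m-1}s'_kV(n-l,\tau^k(\s(\l)))$, with $\sum_k s_k=\sum_k s'_k=t$. Because $M$ has Loewy length two, its isomorphism type is controlled by two families of linear maps between these multiplicity spaces: $\mu_k\colon\Bbbk^{s'_k}\ra\Bbbk^{s_k}$ coming from the $T_1(l,\tau^k(\l))$-piece of $P(l,\tau^k(\l))$, and $\nu_k\colon\Bbbk^{s'_k}\ra\Bbbk^{s_{k+1}}$ coming from the $\ol{T}_1(l,\tau^{k+1}(\l))$-piece (indices mod $m$). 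For any nonzero $w\in\Bbbk^{s'_k}$: if exactly one of $\mu_k(w),\nu_k(w)$ is nonzero then a lift of $w$ generates a $(1,1)$-submodule isomorphic to $T_1(l,\tau^k(\l))$ or $\ol{T}_1(l,\tau^{k+1}(\l))$, which is forbidden by hypothesis; while if both vanish, then $V(n-l,\tau^k(\s(\l)))$ splits off as a simple direct summand, contradicting indecomposability because ${\rm soc}\,M$ contains no such factor. Hence every $\mu_k$ and every $\nu_k$ is injective.

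Combining these injections with $\sum_k s_k=\sum_k s'_k$ forces $s_k=s'_k=d$ for a common integer $d\>1$, so $t=md$, and each $\mu_k,\nu_k$ lies in $GL_d(\Bbbk)$. Using the change-of-basis freedom at each socle vertex to normalize $\mu_k=I$, and then the freedom at head vertices $0,1,\ldots,m-2$ to further normalize $\nu_0=\cdots=\nu_{m-2}=I$, the data of $M$ collapses to a single matrix $C:=\nu_{m-1}\in GL_d(\Bbbk)$, well-defined up to conjugation by $GL_d(\Bbbk)$. Indecomposability of $M$ then forces $C$ to be a single Jordan block.

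Since $\Bbbk$ is algebraically closed, $C$ has an eigenvector $w\in\Bbbk^d$ with some eigenvalue $\eta\in\Bbbk^{\times}$. Placing $\Bbbk w$ at every vertex of the quiver yields a one-dimensional subrepresentation whose wrap-around twist is precisely $\eta$. Pulling this subrepresentation back to a submodule $N\subseteq M\subseteq I(M)$ produces $mn$ generators of $N$ satisfying the exact relations listed before Lemma \ref{4.13} for the standard basis of $M_1(l,\l,\eta)$, so Proposition \ref{4.17} gives $N\cong M_1(l,\l,\eta)$. The main obstacle is performing this pull-back uniformly for the nilpotent and non-nilpotent cases and verifying that $\eta$ appears on the nose as the wrap-around eigenvalue of $C$ rather than as a free scalar; this is where essentially all of the explicit weight-space bookkeeping inside $I(M)$ takes place.
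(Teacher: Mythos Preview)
Your proposal is correct and follows essentially the same route as the paper: embed $M$ into $I(M)=\oplus_k t_kP(l,\tau^k(\l))$, use the absence of $(1,1)$-submodules to force all multiplicities equal, then extract an eigenvector of the cyclic ``wrap-around'' composition to produce the desired $M_1(l,\l,\eta)$. The paper carries this out by explicit weight-space computations in the standard bases of the $P(l,\tau^k(\l))$ (separately for the nilpotent and non-nilpotent cases), whereas you phrase the same argument in quiver-representation language via the multiplicity maps $\mu_k,\nu_k$; your matrix $C=\nu_{m-1}$ is precisely the paper's $X=X_{m-1}\cdots X_0$ after the same normalisations. Two minor points: the claim that indecomposability forces $C$ to be a single Jordan block is true but not needed here (any eigenvector suffices), and once your generators satisfy the standard-basis relations of $M_1(l,\l,\eta)$ the isomorphism is immediate without invoking Proposition~\ref{4.17}.
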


\begin{proof}
Clearly, $t>1$. By Proposition \ref{4.1}, we have
$$P=\oplus_{k=0}^{m-1}t_kP(l,\tau^k(\l))$$
as an injective envelope of $M$ for some $1\<l\<n-1$, $\l\in I_l$ and $t_k\in\mathbb N$ satisfying $\sum_{k=0}^{m-1}t_k=t$. Hence we may assume that
$$M\subseteq{\rm soc}^2P={\rm rad}P=\oplus_{k=0}^{m-1}t_k{\rm rad}P(l,\tau^k(\l)).$$
 It follows that ${\rm soc}M={\rm soc}P=\oplus_{k=0}^{m-1}t_k{\rm soc}P(l,\tau^k(\l))$. For each $1\<s\<t_k$, let
 $$\{v_0^{k,s}, \cdots, v_{n-1}^{k,s}, u_0^{k,s}, \cdots, u_{n-1}^{k,s}\}$$
  be a standard basis of the $s$-th copy  $P(l,\tau^k(\l))$ in the direct sum
  $t_kP(l,\tau^k(\l))$.

{\bf Case 1:} $\mathcal{D}$ is of {\bf nilpotent type}.
In this case, ${\rm rad}P$ has a $\Bbbk$-basis
$$\{v_l^{k,s}, \cdots, v_{n-1}^{k,s}, u_0^{k,s}, \cdots, u_{n-1}^{k,s}|0\<k\<m-1, 1\<s\<t_k\}$$
and
$${\rm soc}M={\rm soc}P={\rm span}\{u_{n-l}^{k,s}, \cdots, u_{n-1}^{k,s}|0\<k\<m-1,1\<s\<t_k\},$$ see the proof of Proposition \ref{3.10}.
Let
$$V={\rm span}\{v_l^{k,s}, \cdots, v_{n-1}^{k,s}, u_{0}^{k,s}, \cdots, u_{n-l-1}^{k,s}|0\<k\<m-1,1\<s\<t_k\}$$ and $U=M\cap V$. Then as vector spaces,
$${\rm rad}P=V\oplus{\rm soc}P\ \mathrm{and}\ M=U\oplus{\rm soc}M.$$
 For any $0\<j\<n-l-1$ and $0\<k\<m-1$,
 $$({\rm rad}P)_{\l\phi^{kn-n+l+j}}={\rm span}\{u_{j}^{k,s},v^{k-1,s'}_{j+l}|1\<s\<t_k,1\<s'\<t_{k-1}\}.$$
For any $n-l\<j\<n-1$ and $0\<k\<m-1$,
$$({\rm rad}P)_{\l\phi^{kn-n+l+j}}={\rm span}\{ u_{j}^{k,s}|1\<s\<t_k\},$$
 where we adopt the conventions $t_{-1}=t_{m-1}$ and $v_{j+l}^{-1,s'}=v_{j+l}^{m-1,s'}$.
Note that
$${\rm soc}P=\oplus_{j=n-l}^{n-1}\oplus_{k=0}^{m-1}({\rm rad}P)_{\l\phi^{kn-n+l+j}}\
\mathrm{and}\ V=\oplus_{j=0}^{n-l-1}\oplus_{k=0}^{m-1}({\rm rad}P)_{\l\phi^{kn-n+l+j}}.$$
Hence
$${\rm soc}M=\oplus_{j=n-l}^{n-1}\oplus_{k=0}^{m-1}M_{\l\phi^{kn-n+l+j}}\ \mathrm{and}\
U=\oplus_{j=0}^{n-l-1}\oplus_{k=0}^{m-1}M_{\l\phi^{kn-n+l+j}}.$$
 Moreover,
$M_{\l\phi^{kn-n+l+j}}=({\rm rad}P)_{\l\phi^{kn-n+l+j}}$ for all $n-l\<j\<n-1$ and $0\<k\<m-1$.
For any $0\<j\<n-1$, define $P_{[j]}=\oplus_{k=0}^{m-1}({\rm rad}P)_{\l\phi^{kn-n+l+j}}$ and $M_{[j]}=M\cap P_{[j]}$. Then
$${\rm rad}P=\oplus_{j=0}^{n-1}P_{[j]}, \ M_{[j]}=\oplus_{k=0}^{m-1}M_{\l\phi^{kn-n+l+j}}\ \mathrm{and}\  M=\oplus_{j=0}^{n-1}M_{[j]}.$$
Moreover,
$$M_{[j]}=P_{[j]}\ \mbox{for all}\  n-l\<j\<n-1, {\rm soc}M=\oplus_{j=n-l}^{n-1}M_{[j]}\ \mathrm{and}\ U=\oplus_{j=0}^{n-l-1}M_{[j]}.$$
By the structure of ${\rm rad}P$, the maps
$$M_{[j]}\ra M_{[j+1]}, v\mapsto xv\ \mathrm{and}\ M_{[j+1]}\ra M_{[j]}, v\mapsto \xi v$$
 are both bijective for any $0\<j<n-1$ with $j\neq n-l-1$.
It follows that ${\rm dim}M_{[j]}=t$ for all $0\<j\<n-1$ since $M$ is of $(t,t)$-type.

If $M^x\cap M_{[n-l-1]}\neq 0$, then by Lemma \ref{4.6}(2) and the equality $M_{[n-l-1]}=\oplus_{k=0}^{m-1}M_{\l\phi^{kn-1}}$ we have  $M^x\cap M_{\l\phi^{kn-1}}\neq 0$
for some $0\<k\<m-1$.  Since $M_{\l\phi^{kn-1}}\subseteq({\rm rad}P)_{\l\phi^{kn-1}}$, it follows from the action of $x$ on ${\rm rad}P$ that
$$M^x\cap M_{\l\phi^{kn-1}}\subseteq{\rm span}\{v_{n-1}^{k-1,s}|1\<s\<t_{k-1}\}.$$
 Let $0\neq w\in M^x\cap M_{\l\phi^{kn-1}}$. Then
$$w=\sum_{s=1}^{t_{k-1}}\b_sv_{n-1}^{k-1,s}\ \mbox{for some} \ \b_1,\cdots, \b_{t_{k-1}}\in\Bbbk.$$
A direct verification shows that the submodule $\langle w\rangle$ is isomorphic to ${T}_1(l,\tau^{k-1}(\l))$.  Hence,  $M$ contains a submodule of $(1,1)$-type, a contradiction. Therefore, $M^x\cap M_{[n-l-1]}=0$.

For any $0\<k\<m-1$, set
$$V_k^0={\rm span}\{v_{n-1}^{k-1, s}|1\<s\<t_{k-1}\}\ \mathrm{and}\ V_k^1={\rm span}\{u_{n-l-1}^{k, s}|1\<s\<t_k\}.$$
 Then
$$({\rm rad}P)_{\l\phi^{kn-1}}=V_k^0\oplus V_k^1\ \mathrm{and}\ M_{\l\phi^{kn-1}}\cap V_k^0=0.$$
Note that $V_k^0=0$ whenever $t_{k-1}=0$.
Hence $V_k^0\oplus M_{\l\phi^{kn-1}}\subseteq V_k^0\oplus V_k^1$ and  hence ${\rm dim} M_{\l\phi^{kn-1}}\<{\rm dim}V_k^1=t_k$.
However, since
$$\sum_{k=0}^{m-1}{\rm dim} M_{\l\phi^{kn-1}}={\rm dim}M_{[n-l-1]}=t=\sum_{k=0}^{m-1}t_k,$$ it follows that ${\rm dim} M_{\l\phi^{kn-1}}={\rm dim}V_k^1=t_k$ and hence $({\rm rad}P)_{\l\phi^{kn-1}}=V_k^0\oplus V_k^1=V_k^0\oplus M_{\l\phi^{kn-1}}$ for all $0\<k\<m-1$.

If one of $t_0, t_1, \cdots, t_{m-1}$ is  zero, we may assume $t_{k-1}=0$ but $t_k\neq 0$ for some $0\<k\<m-1$. Then
$$M_{\l\phi^{kn-1}}=V_k^1={\rm span}\{u_{n-l-1}^{k, s}|1\<s\<t_k\}.$$
In this case, one can check that the submodule $\langle u_{n-l-1}^{k,1}\rangle$ of $M$ is isomorphic to $\overline{T}_1(l,\tau^k(\l))$, a contradiction.  Therefore, $t_0, t_1, \cdots, t_{m-1}$ are all nonzero.

For each $0\<k\<m-1$, there exists a basis $\{x_{k,s}|1\<s\<t_k\}$ of $M_{\l\phi^{kn-1}}$ such that
$$x_{k,s}-u_{n-l-1}^{k,s}\in V_k^0\ \mbox{for all}\ 1\<s\<t_k.$$  We claim that the elements
$$x_{k,1}-u_{n-l-1}^{k,1}, \cdots, x_{k,t_k}-u_{n-l-1}^{k,t_k}$$
 are linearly independent over $\Bbbk$ for all $0\<k\<m-1$.

 Suppose, to the contrary, that $x_{k,1}-u_{n-l-1}^{k,1}, \cdots, x_{k,t_k}-u_{n-l-1}^{k,t_k}$ are linearly dependent over $\Bbbk$ for some $0\<k\<m-1$. If $t_k=1$, then $x_{k,1}-u_{n-l-1}^{k,1}=0$, which implies $u_{n-l-1}^{k,1}=x_{k,1}\in M$.  In this case, $M$ contains a submodule $\langle u_{n-l-1}^{k,1}\rangle$ of $(1,1)$-type,  contradicting our assumption. Hence $t_k>1$.

Without loss of generality, we may assume that
$$x_{k,t_k}-u_{n-l-1}^{k,t_k}=\sum_{j=1}^{t_k-1}\b_j(x_{k,j}-u_{n-l-1}^{k,j})\ \mbox{for some}\ \b_j\in\Bbbk.$$
 Then
 $$u_{n-l-1}^{k,t_k}-\sum_{j=1}^{t_k-1}\b_ju_{n-l-1}^{k,j}=x_{k,t_k}-\sum_{j=1}^{t_k-1}\b_jx_{k,j}\in M.$$
 Thus,  $M$ contains a submodule
 $$\langle u_{n-l-1}^{k,t_k}-\sum_{j=1}^{t_k-1}\b_ju_{n-l-1}^{k,j}\rangle$$
  of $(1,1)$-type as above, again a  contradiction. This proves the claim. Consequently,  $t_{k-1}={\rm dim}V_k^0\>t_k$ for all $0\<k\<m-1$, and hence
  $$t_0=t_1=\cdots=t_{m-1}.$$
  Therefore,  for  each $0\<k\<m-1$, there exists an invertible matrix $X_k\in M_{t_0}(\Bbbk)$ such that
$$(x_{k,1}-u_{n-l-1}^{k,1}, x_{k,2}-u_{n-l-1}^{k,2}, \cdots, x_{k,t_0}-u_{n-l-1}^{k,t_0})=(v_{n-1}^{k-1,1}, v_{n-1}^{k-1,2}, \cdots, v_{n-1}^{k-1,t_0})X_k.$$
It follows that $(x_{k,1}, \cdots, x_{k,t_0})X_k^{-1}=(u_{n-l-1}^{k,1}, \cdots, u_{n-l-1}^{k,t_0})X_k^{-1}+(v_{n-1}^{k-1,1}, \cdots, v_{n-1}^{k-1,t_0})$.

Let $X=X^{-1}_{m-1}\cdots X^{-1}_1X^{-1}_0$. Then $X$ is an invertible matrix in $M_{t_0}(\Bbbk)$. Since $\Bbbk$ is an algebraically closed field, there exists a nonzero vector $B=(\b_1, \cdots, \b_{t_0})^T\in\Bbbk^{t_0}$ and a nonzero scalar $\eta\in\Bbbk$ such that
$$XB=\eta B,$$
where $(\b_1, \cdots, \b_{t_0})^T$ denotes the transposition of $(\b_1, \cdots, \b_{t_0})$. For $0\<k\<m-1$, define $y_{n-l-1}^k\in M_{\l\phi^{kn-1}}$ by
$$y_{n-l-1}^k:=(x_{k,1}, x_{k,2}, \cdots, x_{k,t_0})X^{-1}_k\cdots X^{-1}_1X^{-1}_0B.$$
Then we have
$$\begin{array}{rl}
\vspace{0.1cm}
y_{n-l-1}^0=&(u_{n-l-1}^{0,1},\cdots,u_{n-l-1}^{0,t_0})X^{-1}_0B
+(v_{n-1}^{m-1,1},\cdots,v_{n-1}^{m-1,t_0})B,\\
y_{n-l-1}^k=&(u_{n-l-1}^{k,1},\cdots,u_{n-l-1}^{k,t_0})X^{-1}_k\cdots X^{-1}_1X^{-1}_0B\\
\vspace{0.1cm}
&+(v_{n-1}^{k-1,1},\cdots,v_{n-1}^{k-1,t_0})X^{-1}_{k-1}\cdots X^{-1}_0B, \ 1\<k\<m-1.\\
\end{array}$$
Note that the map $P_{[0]}\ra P_{[n-l-1]}, v \mapsto x^{n-l-1}v$ is a bijection and its restriction gives rise to a bijection from $M_{[0]}$ onto $M_{[n-l-1]}$. Hence if $v\in P_{[0]}$ satisfies $x^{n-l-1}v\in M_{[n-l-1]}$, then $v\in M_{[0]}$.
Define $y_0^k\in P_{[0]}$, $0\<k\<m-1$, by
$$\begin{array}{rl}
\vspace{0.1cm}
y_{0}^0=&(u_{0}^{0,1},\cdots,u_{0}^{0,t_0})X^{-1}_0B+(v_{l}^{m-1,1},\cdots,v_{l}^{m-1,t_0})B,\\
y_0^k=&(u_0^{k,1},\cdots,u_0^{k,t_0})X^{-1}_k\cdots X^{-1}_1X^{-1}_0B\\
&+(v_{l}^{k-1,1},\cdots,v_{l}^{k-1,t_0})X^{-1}_{k-1}\cdots X^{-1}_0B,\ 1\<k\<m-1.\\
\end{array}$$
Let $0\<k\<m-1$. It is easy to see that $x^{n-l-1}y_0^k=y_{n-l-1}^k$, and hence $y_0^k\in M_{[0]}$. Let $y_j^k=x^{j}y_0^k$ for $1\<j\<n-l-1$ and $0\<k\<m-1$. Then $y_j^k\in M_{\l\phi^{kn-n+l+j}}$ for all $0\<j\<n-l-1$. Moreover, $xy_j^k=y_{j+1}^k$ for all $0\<j<n-l-1$. Furthermore, for all $0\<j\<n-l-1$, we have
$$\begin{array}{rl}
\vspace{0.1cm}
y_j^0=&(u_j^{0,1},\cdots,u_j^{0,t_0})X^{-1}_0B+(v_{l+j}^{m-1,1},\cdots,v_{l+j}^{m-1,t_0})B,\\
y_j^k=&(u_j^{k,1},\cdots,u_j^{k,t_0})X^{-1}_k\cdots X^{-1}_1X^{-1}_0B\\
&+(v_{l+j}^{k-1,1},\cdots,v_{l+j}^{k-1,t_0})X^{-1}_{k-1}\cdots X^{-1}_0B,\ 1\<k\<m-1.\\
\end{array}$$
In particular, for all $0\<j\<n-l-1$, we have
$$y_j^{m-1}=(u_j^{m-1,1},\cdots,u_j^{m-1,t_0})\eta B+(v_{l+j}^{m-2,1},\cdots,v_{l+j}^{m-2,t_0})X^{-1}_{m-2}\cdots X^{-1}_0B.$$
For all $n-l\<j\<n-1$ and $0\<k\<m-1$, define $y^k_j\in{\rm rad}P$ by
$$y^k_j=\left\{\begin{array}{ll}
\vspace{0.1cm}
(u_j^{m-1,1},\cdots,u_j^{m-1,t_0})B,& \text{ if } k=0,\\
(u_j^{k-1,1},\cdots,u_j^{k-1,t_0})X^{-1}_{k-1}\cdots X^{-1}_0B, & \text{ if } 1\<k\<m-1.\\
\end{array}\right.$$
Clearly, the set $\{y_j^k|0\<j\<n-1, 0\<k\<m-1\}$ is linearly independent.
A straightforward verification shows that for any $g\g\in G\times\G$,
\begin{equation*}
(g\g)y^k_j=\left\{
\begin{array}{ll}
(\phi^{l+j}\tau^{k-1}(\l))(g\g)y^k_j,& 0\<j\<n-l-1, 0\<k\<m-1,\\
(\phi^{j-n+l}\tau^{k-1}(\l))(g\g)y^k_j,& n-l\<j\<n-1, 0\<k\<m-1,\\
\end{array}\right.\\
\end{equation*}
\begin{equation*}
xy^k_j=\left\{
\begin{array}{ll}
y^k_{j+1},& 0\<j\<n-l-2, 0\<k\<m-1,\\
y^{k+1}_{n-l},& j=n-l-1,0\<k<m-1\\
\eta y^{0}_{n-l},& j=n-l-1,k=m-1\\
y^k_{j+1},& n-l\<j\<n-2,0\<k\<m-1,\\
0,& j=n-1,0\<k\<m-1\\
\end{array}\right.\\
\end{equation*}
\begin{equation*}
\xi y^k_j=\left\{
\begin{array}{ll}
y^k_{n-1}, & j=0,0\<k\<m-1\\
\a_j(n-l,\s(\l))y^k_{j-1},& 1\<j\<n-l-1,0\<k\<m-1\\
0, & j=n-l,0\<k\<m-1\\
\a_{j-n+l}(l,\l)y^k_{j-1},& n-l+1\<j\<n-1,0\<k\<m-1\\
\end{array}\right.\\
\end{equation*}
It follows that $\{y_j^k|1\<j\<n, 0\<k\<m-1\}$ is a basis of the submodule $\langle y_0^0, y_0^1, \cdots, y_0^{m-1}\rangle$ of $M$ and $\langle y_0^0, y_0^1, \cdots, y_0^{m-1}\rangle\cong M_1(l,\tau^{-1}(\l),\eta)$.

{\bf Case 2:} $\mathcal{D}$ is of {\bf non-nilpotent type}.
In this case, ${\rm rad}P$ has a $\Bbbk$-basis
$$\{v_0^{k,s}, \cdots, v_{n-l-1}^{k,s}, u_0^{k,s}, \cdots, u_{n-1}^{k,s}|0\<k\<m-1, 1\<s\<t_k\}$$
and ${\rm soc}M={\rm soc}P={\rm span}\{u_{0}^{k,s}, \cdots, u_{l-1}^{k,s}|0\<k\<m-1,1\<s\<t_k\}$.
Let $V={\rm span}\{v_0^{k,s}, \cdots, v_{n-l-1}^{k,s}, u_{l}^{k,s}, \cdots, u_{n-1}^{k,s}|0\<k\<m-1,1\<s\<t_k\}$ and $U=M\cap V$.
As vector spaces,  we have
$${\rm rad}P=V\oplus{\rm soc}P\ \mathrm{and}\ M=U\oplus{\rm soc}M.$$
 For any $0\<j\<l-1$ and $0\<k\<m-1$,
 $$({\rm rad}P)_{\l\phi^{kn+j}}={\rm span}\{u_{j}^{k,s}|1\<s\<t_k\}.$$
For any $l\<j\<n-1$ and $0\<k\<m-1$,
$$({\rm rad}P)_{\l\phi^{kn+j}}={\rm span}\{ u_{j}^{k,s},v_{j-l}^{k+1,s'}|1\<s\<t_k,1\<s'\<t_{k+1}\},$$ where we set $t_{m}=t_{0}$ and $v_{j-l}^{m,s'}=v_{j-l}^{0,s'}.$
Note that
$${\rm soc}P=\oplus_{j=0}^{l-1}\oplus_{k=0}^{m-1}({\rm rad}P)_{\l\phi^{kn+j}}\
\mathrm{and}\ V=\oplus_{j=l}^{n-1}\oplus_{k=0}^{m-1}({\rm rad}P)_{\l\phi^{kn+j}}.$$
Hence
$${\rm soc}M=\oplus_{j=0}^{l-1}\oplus_{k=0}^{m-1}M_{\l\phi^{kn+j}}\ \mathrm{and}\ U=\oplus_{j=l}^{n-1}\oplus_{k=0}^{m-1}M_{\l\phi^{kn+j}}.$$  Moreover,
$M_{\l\phi^{kn+j}}=({\rm rad}P)_{\l\phi^{kn+j}}$ for all $0\<j\<l-1$ and $0\<k\<m-1$.

For any $0\<j\<n-1$, let $P_{[j]}=\oplus_{k=0}^{m-1}({\rm rad}P)_{\l\phi^{kn+j}}$ and $M_{[j]}=M\cap P_{[j]}$. Then
$${\rm rad}P=\oplus_{j=0}^{n-1}P_{[j]}, \ M_{[j]}=\oplus_{k=0}^{m-1}M_{\l\phi^{kn+j}}\ \mathrm{and}\  M=\oplus_{j=0}^{n-1}M_{[j]}.$$
Moreover, $M_{[j]}=P_{[j]}$ for all $0\<j\<l-1$, 
${\rm soc}M=\oplus_{j=0}^{l-1}M_{[j]}\ \mathrm{and}\ U=\oplus_{j=l}^{n-1}M_{[j]}.$
By the structure of ${\rm rad}P$, the maps
$$M_{[j]}\ra M_{[j+1]}, v\mapsto xv\ \mathrm{and}\ M_{[j+1]}\ra M_{[j]}, v\mapsto \xi v$$
are both bijective for any $0\<j<n-1$ with $j\neq l-1$.
It follows that ${\rm dim}M_{[j]}=t$ for all $0\<j\<n-1$ since $M$ is of $(t,t)$-type.

Suppose that $M^{\xi}\cap M_{[l]}\neq 0$.  Then by Lemma \ref{4.6}(3) and the decomposition
$M_{[l]}=\oplus_{k=0}^{m-1}M_{\l\phi^{kn+l}}$,  there exists some $0\<k\<m-1$ such that
$$M^{\xi}\cap M_{\l\phi^{kn+l}}\neq 0.$$
 Since $M_{\l\phi^{kn+l}}\subseteq({\rm rad}P)_{\l\phi^{kn+l}}$,  the action of $\xi$ on ${\rm rad}P$ implies that
 $$M^{\xi}\cap M_{\l\phi^{kn+l}}\subseteq{\rm span}\{v_{0}^{k+1,s}|1\<s\<t_{k+1}\}.$$
Let $0\neq w\in M^{\xi}\cap M_{\l\phi^{kn+l}}$. Then we can write
$w=\sum_{s=1}^{t_{k+1}}\b_sv_{0}^{k+1,s}$ for some $\b_1,\cdots, \b_{t_{k+1}}\in\Bbbk$. A straightforward verification shows that the submodule $\langle w\rangle$ is isomorphic to $\ol{T}_1(l,\tau^{k+1}(\l))$, which is a contradiction. Therefore, we must have  $M^{\xi}\cap M_{[l]}=0$.

For  each $0\<k\<m-1$,  set
$$V_k^0={\rm span}\{v_{0}^{k+1, s}|1\<s\<t_{k+1}\}\ \mathrm{and}\ V_k^1={\rm span}\{u_{l}^{k, s}|1\<s\<t_k\}.$$ Then
$$({\rm rad}P)_{\l\phi^{kn+l}}=V_k^0\oplus V_k^1\ \mathrm{and}\ M_{\l\phi^{kn+l}}\cap V_k^0=0.$$
Note that $V_k^0=0$ if $t_{k+1}=0$.  Hence we have
$$V_k^0\oplus M_{\l\phi^{kn+l}}\subseteq V_k^0\oplus V_k^1,$$
which implies ${\rm dim} M_{\l\phi^{kn+l}}\<{\rm dim}V_k^1=t_k$.
 However, 
 $$\sum_{k=0}^{m-1}{\rm dim} M_{\l\phi^{kn+l}}={\rm dim}M_{[l]}=t=\sum_{k=0}^{m-1}t_k.$$ It follows that ${\rm dim} M_{\l\phi^{kn+l}}={\rm dim}V_k^1=t_k$. Consequently, we have
 $$({\rm rad}P)_{\l\phi^{kn+l}}=V_k^0\oplus V_k^1=V_k^0\oplus M_{\l\phi^{kn+l}}$$
for all $0\<k\<m-1$.

If one of $t_0, t_1, \cdots, t_{m-1}$ is  zero, we may assume $t_{k+1}=0$ but $t_k\neq 0$ for some $0\<k\<m-1$. In this case,
$$M_{\l\phi^{kn+l}}=V_k^1={\rm span}\{u_{l}^{k, s}|1\<s\<t_k\},$$
 and one can check that the submodule $\langle u_{l}^{k,1}\rangle$ of $M$ is isomorphic to $T_1(l,\tau^k(\l))$, a contradiction. Hence $t_0, t_1, \cdots, t_{m-1}$ are all nonzero.

 Then,  for each $0\<k\<m-1$, there exists a basis $\{x_{k,s}|1\<s\<t_k\}$ of $M_{\l\phi^{kn+l}}$ such that $x_{k,s}-u_{l}^{k,s}\in V_k^0$ for all $1\<s\<t_k$. An argument similar to Case 1 shows that the vectors
 $$x_{k,1}-u_{l}^{k,1}, \cdots, x_{k,t_k}-u_{l}^{k,t_k}$$
  are linearly independent over $\Bbbk$ for all $0\<k\<m-1$. Consequently,
  $$t_{k+1}={\rm dim}V_k^0\>t_k\ \mbox{for all}\ 0\<k\<m-1,$$
  which implies $t_0=t_1=\cdots=t_{m-1}$. Therefore,  for each $0\<k\<m-1$, there exists an invertible matrix $X_k\in M_{t_0}(\Bbbk)$ such that
$$(x_{k,1}-u_{l}^{k,1}, x_{k,2}-u_{l}^{k,2}, \cdots, x_{k,t_0}-u_{l}^{k,t_0})=(v_{0}^{k+1,1}, v_{0}^{k+1,2}, \cdots, v_{0}^{k+1,t_0})X_k.$$
Then
$$(x_{k,1}, x_{k,2}, \cdots, x_{k,t_0})=(u_{l}^{k,1}, u_{l}^{k,2}, \cdots, u_{l}^{k,t_0})+(v_{0}^{k+1,1}, v_{0}^{k+1,2}, \cdots, v_{0}^{k+1,t_0})X_k.$$
Let $X:=X_{m-1}\cdots X_1X_0$. Then $X$ is an invertible matrix in $M_{t_0}(\Bbbk)$. Since $\Bbbk$ is an algebraically closed field, there is exists a nonzero vector $B=(\b_1, \cdots, \b_{t_0})^T\in\Bbbk^{t_0}$ and a nonzero scalar $\eta\in\Bbbk$ such that
$$XB=\eta B,$$
 For $0\<k\<m-1$, define   $y_{l}^k\in M_{\l\phi^{kn+l}}$ by
$$y_l^k:=\left\{\begin{array}{ll}
(x_{0,1},\cdots,x_{0,t_0})B, & \text{ if } k=0,\\
(x_{k,1},\cdots,x_{k,t_0})X_{k-1}\cdots X_1X_0B, & \text{ if } 1\<k\<m-1.\\
\end{array}\right.$$
Then, we have
$$\begin{array}{rl}
y_l^0=&(u_{l}^{0,1},\cdots,u_{l}^{0,t_0})B+(v_0^{1,1},\cdots,v_0^{1,t_0})X_0B,\\
y_l^k=&(u_l^{k,1},\cdots,u_l^{k,t_0})X_{k-1}\cdots X_1X_0B\\
&+(v_0^{k+1,1},\cdots,v_0^{k+1,t_0})X_k\cdots X_1X_0B, \ 1\<k\<m-1.\\
\end{array}$$
Note that the map $P_{[n-1]}\ra P_{[l]}, y\mapsto {\xi}^{n-l-1}y$ is a bijection and its restriction gives rise to a bijection from $M_{[n-1]}$ onto $M_{[l]}$. Hence,  if $y\in P_{[n-1]}$ satisfies ${\xi}^{n-l-1}y\in M_{[l]}$, then $y\in M_{[n-1]}$.
Define $y_{n-1}^k\in P_{[n-1]}$, $0\<k\<m-1$, by
$$\begin{array}{rl}
y_{n-1}^0=&(u_{n-1}^{0,1},\cdots,u_{n-1}^{0,t_0})B+(v_{n-l-1}^{1,1},\cdots,v_{n-l-1}^{1,t_0})X_0B,\\
y_{n-1}^k=&(u_{n-1}^{k,1},\cdots,u_{n-1}^{k,t_0})X_{k-1}\cdots X_1X_0B\\
&+(v_{n-l-1}^{k+1,1},\cdots,v_{n-l-1}^{k+1,t_0})X_k\cdots X_1X_0B, \ 1\<k\<m-1.\\
\end{array}$$
Let $0\<k\<m-1$. It is easy to see that ${\xi}^{n-l-1}y_{n-1}^k=y_l^k$, and hence $y_{n-1}^k\in M_{[n-1]}$. Let $y_j^k={\xi}^{n-1-j}y_{n-1}^k$ for $0\<j<n-1$. Then $y_j^k\in M_{\l\phi^{kn+j}}$ for all $0\<j\<n-1$. Moreover, we have
$$y^k_j=\left\{\begin{array}{ll}
\vspace{0.1cm}
(u_j^{0,1},\cdots,u_j^{0,t_0})B, & 0\<j\<l-1, k=0,\\
\vspace{0.1cm}
(u_j^{k,1},\cdots,u_j^{k,t_0})X_{k-1}\cdots X_1X_0B, & 0\<j\<l-1, 1\<j\<m-1, \\
\vspace{0.1cm}
(u_j^{0,1},\cdots,u_j^{0,t_0})B+(v_{j-l}^{1,1},\cdots,v_{j-l}^{1,t_0})X_0B, & l\<j\<n-1, k=0,\\
(u_j^{k,1},\cdots,u_j^{k,t_0})X_{k-1}\cdots X_1X_0B &\\
+(v_{j-l}^{k+1,1},\cdots,v_{j-l}^{k+1,t_0})X_k\cdots X_1X_0B, & l\<j\<n-1, 1\<k\<m-1.\\
\end{array}\right.$$
In particular, for any $l\<j\<n-1$, we have
$$y_j^{m-1}=(u_j^{m-1,1},\cdots,u_j^{m-1,t_0})X_{m-2}\cdots X_0B+\eta(v_{j-l}^{0,1},\cdots,v_{j-l}^{0,t_0})B.$$
Clearly, the set $\{y_j^k|0\<j\<n-1, 0\<k\<m-1\}$ is linearly independent.
By a straightforward verification, one can check that
\begin{eqnarray*}
&(g\g)y_j^k=(\phi^j\tau^k(\l))(g\g)y_j^k,\ \ \ g\g\in G\times\G, 0\<j\<n-1, 0\<k\<m-1,\\
&x y_j^k=\left\{\begin{array}{ll}
\a_{j+1}(l,\l)y_{j+1}^k,& 0\<j\<l-2, 0\<k\<m-1,\\
0,& j=l-1, 0\<k\<m-1,\\
\a_{j+1-l}(n-l,\s(\l))y_{j+1}^k,& l\<j\<n-2, 0\<k\<m-1,\\
z_{l,\l}y_0^k+y_0^{k+1},& j=n-1, 0\<k<m-1,\\
z_{l,\l}y_0^{m-1}+\eta y_0^0,& j=n-1, k=m-1,\\
\end{array}\right.\\
&\xi y_j^k=\left\{\begin{array}{ll}
0,& j=0, 0\<k\<m-1,\\
y_{j-1}^k,& 1\<j\<n-1, 0\<k\<m-1.\\
\end{array}\right.
\end{eqnarray*}
It follows that $\{y_j^k|0\<j\<n-1, 0\<k\<m-1\}$ is a basis of the submodule $\langle y_{n-1}^0, y_{n-1}^1, \cdots, y_{n-1}^{m-1}\rangle$ of $M$ and $\langle y_{n-1}^0, y_{n-1}^1, \cdots, y_{n-1}^{m-1}\rangle\cong M_1(l,\l,\eta)$.
\end{proof}

Similarly to \cite[Theorem 4.37]{SunChen}, one can prove the following proposition.

\begin{proposition}\label{4.20}
For any $\eta\in\Bbbk^{\times}$, $l, t\in \mathbb{Z}$ with $1\<l\<n-1$, $t\>1$, and $\l\in I_l$ there exists an indecomposable $D(H_{\mathcal D})$-module $M_t(l,\l,\eta)$ of $(tm,tm)$-type. Moreover,
\begin{enumerate}
\item[(1)] $\O^2M_t(l,\l,\eta)\cong M_t(l,\l,\eta)$,  ${\rm soc}M_t(l,\l,\eta)\cong\oplus_{k=0}^{m-1}tV(l,\tau^k(\l))$ and\\ ${\rm head}M_t(l,\l,\eta)\cong\oplus_{k=0}^{m-1}tV(n-l,\tau^k(\s(\l)))$.
\item[(2)] If $M_t(l,\l,\eta)$ contains a submodule of $(s,s)$-type, then $m|s$. Moreover, for any $1\<j<t$, $M_t(l,\l,\eta)$ contains a unique submodule of $(jm,jm)$-type, which is isomorphic to $M_j(l,\l,\eta)$ and the quotient module of $M_t(l,\l,\eta)$ modulo the submodule of $(jm,jm)$-type is isomorphic to $M_{t-j}(l,\l,\eta)$.
\item[(3)] For any $1\<j<t$, the unique submodule of $(jm,jm)$-type of $M_t(l,\l,\eta)$ is contained in that of $((j+1)m,(j+1)m)$-type.
\item[(4)] $M_t(l,\l,\eta)$ is not isomorphic to $T_{tm}(l',\l')$ or $\ol{T}_{tm}(l',\l')$ for any $1\<l'\<n-1$ and $\l'\in I_{l'}$.
\item[(5)] There exist Auslander-Reiten sequences:
\begin{eqnarray*}
&0\ra M_1(l,\l,\eta)\xrightarrow{f_1}
M_2(l,\l,\eta)\xrightarrow{g_1}
M_1(l,\l,\eta)\ra 0,\\
&0\ra M_t(l,\l,\eta)\xrightarrow{\left(\begin{matrix}
	g_{t-1} \\
	f_t \\
	\end{matrix}\right)}
M_{t-1}(l,\l,\eta)\oplus M_{t+1}(l,\l,\eta)\xrightarrow{(f'_{t-1}, g_t)}
M_t(l,\l,\eta)\ra 0\ (t\>2).
\end{eqnarray*}
\end{enumerate}
\end{proposition}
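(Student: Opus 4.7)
The plan is to mimic the construction of $M_1(l,\l,\eta)$ given before Lemma \ref{4.13}, but replace the scalar eigenvalue $\eta$ by a $t\times t$ Jordan block. Concretely, set $P=\oplus_{k=0}^{m-1}tP(l,\tau^k(\l))$ and index a standard basis of each summand by a pair $(k,s)$ with $1\<s\<t$. I would then define, in the nilpotent case, elements $x^{k,s}_j\in P$ by mirroring the formulas for $x^k_j$ used in the construction of $M_1$, except that where the transition from $k=m-1$ back to $k=0$ previously scaled by $\eta$, it now acts via the $t\times t$ Jordan block $J_t(\eta)$ on the second index $s$. In the non-nilpotent case, the analogous adjustment is made in the $x$-action at $j=n-1$, $k=m-1$. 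Routine (but tedious) verification of the defining relations of $D(H_{\mathcal D})$ then shows that the span $M_t(l,\l,\eta):=\mathrm{span}\{x^{k,s}_j\}$ is a submodule of $P$.

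With $M_t(l,\l,\eta)$ in hand, parts (1)--(3) are established as follows. Indecomposability and the $(tm,tm)$-type statement come from the weight-space argument already used in the proof of Lemma \ref{4.13}(2): the one-dimensional weight spaces force any direct summand decomposition to be compatible with the index $s$, whereupon the fact that $J_t(\eta)$ has a unique one-dimensional invariant subspace rules out any nontrivial decomposition. The socle/head identification in (1) follows from ${\rm soc}M_t(l,\l,\eta)={\rm soc}P$ and the isomorphism ${\rm head}M_t(l,\l,\eta)\cong{\rm rad}P/{\rm rad}^2P$, combined with Lemma \ref{3.11}. The $\Omega^2$-periodicity is derived by imitating Lemmas \ref{4.14}--\ref{4.15}: compute $\Omega^{-1}M_t(l,\l,\eta)$ inside $P/M_t(l,\l,\eta)$, obtain $M_t(n-l,\s^{-1}(\l),(-1)^m\eta)$ (noting that the Jordan block structure is preserved since the shift $s\mapsto s$ is linear), and then apply the analogue of Lemma \ref{4.15} showing that shifting $\l\mapsto\tau(\l)$ does not change the isomorphism class.

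For (2) and (3) I would use the Jordan-block structure directly: invariant subspaces of $J_t(\eta)$ are exactly the flags $\langle e_1\rangle\subset\langle e_1,e_2\rangle\subset\cdots\subset\langle e_1,\dots,e_t\rangle$, and the same weight-space rigidity that gave indecomposability shows that every $D(H_{\mathcal D})$-submodule of $M_t(l,\l,\eta)$ whose socle length is a multiple of $m$ must correspond, through the index $s$, to such a $J_t(\eta)$-invariant subspace. This gives both the uniqueness of the $(jm,jm)$-submodule, its isomorphism type $M_j(l,\l,\eta)$, the quotient identification $M_{t-j}(l,\l,\eta)$, and the inclusion chain in (3). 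Simultaneously it proves (4), because by Proposition \ref{4.9}(2) (resp.\ Proposition \ref{4.10}(2)) a $T_{tm}(l',\l')$ (resp.\ $\ol T_{tm}(l',\l')$) contains submodules of $(1,1)$-type, whereas by the same argument as Lemma \ref{4.13}(3) the module $M_t(l,\l,\eta)$ contains none.

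Finally, for part (5), I would invoke Auslander--Reiten theory: by (1) the family $\{M_t(l,\l,\eta)\}_{t\>1}$ lies in a single $\Omega^2$-stable AR-component, and the filtration in (2)--(3) identifies this component as a homogeneous tube with $M_1(l,\l,\eta)$ at the mouth. The AR-translate $\tau_{AR}$ coincides with $\Omega^2$ on the stable category of the symmetric algebra $D(H_{\mathcal D})$, so $\tau_{AR}M_t(l,\l,\eta)\cong M_t(l,\l,\eta)$, and the standard shape of AR-sequences in a homogeneous tube yields precisely the two displayed sequences. The main obstacle I anticipate is (2): verifying that every submodule of socle length $jm$ is forced to come from a $J_t(\eta)$-invariant subspace requires an induction on $j$ of the same delicacy as Lemma \ref{4.19}, where one must rule out the existence of unwanted simple submodules sprouting off the Jordan filtration, again via the fine weight-space analysis and the algebraic closedness of $\Bbbk$.
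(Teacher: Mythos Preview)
Your approach is sound and in fact more explicit than what the paper does: the paper simply cites \cite[Theorem 4.37]{SunChen} without further argument. That reference presumably builds the $M_t$ inductively via Auslander--Reiten theory, starting from the already-constructed $M_1(l,\l,\eta)$ and using Corollaries \ref{4.16} and \ref{4.18} (which give $\Omega^2 M_1\cong M_1$ and $\mathrm{End}(M_1)\cong\Bbbk$) to produce the AR-sequence $0\to M_1\to M_2\to M_1\to 0$ with indecomposable middle term, then iterating. Your Jordan-block construction reaches the same modules by a direct route and has the advantage of providing an explicit basis for every $M_t$; the inductive route is shorter but less concrete.

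Two inaccuracies in your write-up should be corrected. First, for $t>1$ the weight spaces of $M_t(l,\l,\eta)$ are \emph{$t$-dimensional}, not one-dimensional; your indecomposability argument must therefore track how the $x$- and $\xi$-actions interlock these $t$-dimensional weight spaces and conclude that any $G\times\Gamma$-graded direct summand corresponds to a $J_t(\eta)$-invariant subspace of $\Bbbk^t$. Second, $\mathrm{head}\,M_t(l,\l,\eta)$ is \emph{not} all of $\mathrm{rad}P/\mathrm{rad}^2P$: the latter has length $2tm$ by Lemma \ref{3.11}(2), while the head has length $tm$. The head is only the image of $M_t$ in $\mathrm{rad}P/\mathrm{rad}^2P$, and you must identify which half it is from your explicit basis. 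Finally, for the first clause of (2) (that $m\mid s$ for \emph{any} $(s,s)$-type submodule), your Jordan-flag argument as stated presupposes $m\mid s$; to exclude other values of $s$ you should run the weight-space analysis of Lemma \ref{4.19} inside $M_t$ and observe that it forces the socle multiplicities along the $\tau$-orbit to all be equal.
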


\begin{corollary}\label{4.21}
Let $\eta, \eta'\in\Bbbk^{\times}$ and $l, l', t, t'\in\mathbb Z$ with $1\<l, l'\<n-1$, $t, t'\>1$ and $\l\in I_l, \l'\in I_{l'}$. Then
$M_t(l,\l,\eta)\cong M_{t'}(l', \l', \eta') \Leftrightarrow t=t', l=l', \eta=\eta' \text{ and } \l'=\tau^k(\l)$ for some integer $k$
\end{corollary}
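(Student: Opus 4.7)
The plan is to prove both implications by reducing to the $t=1$ case, which is already handled by Proposition \ref{4.17}.

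For the $(\Leftarrow)$ direction, assume $t=t'$, $l=l'$, $\eta=\eta'$ and $\l'=\tau^k(\l)$ for some integer $k$. When $t=1$ this is a special case of Proposition \ref{4.17} (or Lemma \ref{4.15}). For general $t$, I would argue by induction on $t$ using the Auslander-Reiten sequences in Proposition \ref{4.20}(5): assuming $M_s(l,\l,\eta) \cong M_s(l,\tau^k(\l),\eta)$ for all $s\<t$, the uniqueness of the AR-sequence ending at $M_t(l,\tau^k(\l),\eta)$ together with the isomorphism of its middle-term summands $M_{t-1}(l,\tau^k(\l),\eta) \oplus M_{t+1}(l,\tau^k(\l),\eta) \cong M_{t-1}(l,\l,\eta) \oplus M_{t+1}(l,\l,\eta)$ forces $M_{t+1}(l,\l,\eta) \cong M_{t+1}(l,\tau^k(\l),\eta)$. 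Alternatively, one could construct the isomorphism directly by shifting block indices cyclically in the spirit of the explicit map written down in Lemma \ref{4.15}.

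For the $(\Rightarrow)$ direction, suppose $f: M_t(l,\l,\eta)\xrightarrow{\sim}M_{t'}(l',\l',\eta')$. First, Proposition \ref{4.20}(1) says the source has $(tm,tm)$-type and the target has $(t'm,t'm)$-type; comparing the length of the socle (or of the module) immediately gives $t=t'$. Second, $f$ restricts to an isomorphism of socles, so by Proposition \ref{4.20}(1) we get
\begin{equation*}
\bigoplus_{k=0}^{m-1} tV(l,\tau^k(\l)) \;\cong\; \bigoplus_{k=0}^{m-1} tV(l',\tau^k(\l')).
\end{equation*}
By Krull-Schmidt and Proposition \ref{3.6}, matching a simple summand on each side gives $l=l'$ and $\l'=\tau^k(\l)$ for some integer $k$. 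Third, by Proposition \ref{4.20}(2) each module contains a \emph{unique} submodule of $(m,m)$-type, namely $M_1(l,\l,\eta)$ and $M_1(l',\l',\eta')$ respectively; the isomorphism $f$ must send the one to the other. Therefore $M_1(l,\l,\eta) \cong M_1(l',\l',\eta')$, and Proposition \ref{4.17} forces $\eta=\eta'$ (and confirms $\l'=\tau^j(\l)$).

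The main obstacle is the $(\Leftarrow)$ direction: constructing an explicit isomorphism $M_t(l,\l,\eta) \cong M_t(l,\tau(\l),\eta)$ for arbitrary $t$. The inductive argument via AR-sequences avoids ugly basis manipulations, but it relies on uniqueness of AR-sequences and on knowing that $M_{t-1}(l,\l,\eta) \oplus M_{t+1}(l,\l,\eta)$ decomposes with the expected indecomposable summands (which is guaranteed by Proposition \ref{4.20}). Once this is in place, the forward direction is essentially a mechanical application of the structure results already established.
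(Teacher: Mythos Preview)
Your proof is correct and matches the paper's approach: the paper's proof simply cites Proposition \ref{4.17}, Proposition \ref{4.20}(2,5), and the length computation $l(M_t(l,\l,\eta))=2tm$, and you have unpacked exactly these ingredients. Your use of Proposition \ref{4.20}(2) (unique $(m,m)$-type submodule) and Proposition \ref{4.17} for the forward direction, and the AR-sequence induction via Proposition \ref{4.20}(5) for the backward direction, is precisely what the terse citation encodes; the only cosmetic difference is that the paper compares total lengths rather than socle lengths to obtain $t=t'$.
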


\begin{proof}
It follows from Proposition \ref{4.17}, Proposition \ref{4.20}(2,5), and the facts that $l(M_t(l,\l,\eta))=2tm$ and $l(M_{t'}(l', \l', \eta'))=2t'm$.
\end{proof}

\begin{proposition}\label{4.22}
Let $M$ be an indecomposable $D(H_{\mathcal D})$-module of $(t,t)$-type. If $M$ contains a submodule isomorphic to $M_1(l,\l,\eta)$ for some $1\<l\<n-1$, $\l\in I_{l}$ and $\eta\in\Bbbk^{\times}$, then $t=sm$ and $M\cong M_s(l,\l, \eta)$ for some $s\>1$.
\end{proposition}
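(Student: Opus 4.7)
My plan is to proceed in three stages: first rule out $(1,1)$-type submodules in $M$, next deduce that $m\mid t$ by invoking the machinery already built for Lemma \ref{4.19}, and finally lift the embedding $M_1(l,\l,\eta)\hookrightarrow M$ up the tube $\{M_j(l,\l,\eta)\}_{j\>1}$ via the Auslander-Reiten sequences of Proposition \ref{4.20}(5) until the length of $M_j(l,\l,\eta)$ catches up with $l(M)$.

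For the first stage, suppose toward a contradiction that $M$ contained a submodule of $(1,1)$-type. Then Proposition \ref{4.12} forces $M\cong T_t(l',\mu)$ or $M\cong\ol{T}_t(l',\mu)$. By Propositions \ref{4.9}(2) and \ref{4.10}(2), every nonzero submodule of such $M$ is again of the form $T_j(l',\mu)$ or $\ol{T}_j(l',\mu)$, and Propositions \ref{4.9}(3) and \ref{4.10}(3) imply that each of these contains a $(1,1)$-type submodule. But $M_1(l,\l,\eta)$ contains no $(1,1)$-type submodule by Lemma \ref{4.13}(3), contradicting $M_1(l,\l,\eta)\subseteq M$. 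Hence $M$ has no $(1,1)$-type submodule, and the counting argument carried out in the proof of Lemma \ref{4.19} then yields $I(M)\cong\bigoplus_{k=0}^{m-1}sP(l,\tau^k(\l))$ for some $s\>1$, so $t=sm$.

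For the second stage I induct on $s$. The base case $s=1$ is immediate from $l(M)=2m=l(M_1(l,\l,\eta))$ together with $M_1(l,\l,\eta)\subseteq M$. For $s>1$, the inclusion $\iota\colon M_1(l,\l,\eta)\hookrightarrow M$ cannot be a split monomorphism, for otherwise the indecomposability of $M$ would force $M=M_1(l,\l,\eta)$ and $s=1$. The Auslander-Reiten sequence $0\to M_1(l,\l,\eta)\xrightarrow{f_1}M_2(l,\l,\eta)\xrightarrow{g_1}M_1(l,\l,\eta)\to 0$ of Proposition \ref{4.20}(5) therefore produces a lift $\phi_2\colon M_2(l,\l,\eta)\to M$ with $\phi_2\circ f_1=\iota$. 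Iterating with the AR-sequences $0\to M_j\to M_{j-1}\oplus M_{j+1}\to M_j\to 0$, I will build a compatible family of lifts $\phi_j\colon M_j(l,\l,\eta)\to M$ extending $\iota$.

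The hard part—and the main obstacle—is checking that each $\phi_j$ is injective. The idea is that $\ker\phi_j\cap f_j(M_{j-1}(l,\l,\eta))=0$ by induction on injectivity of $\phi_{j-1}$, so $\ker\phi_j$ embeds into $M_j(l,\l,\eta)/f_j(M_{j-1}(l,\l,\eta))\cong M_1(l,\l,\eta)$; combined with the unique-submodule structure of Proposition \ref{4.20}(2) and the absence of $(1,1)$-type submodules in $M$ established in stage one, any such nonzero kernel would produce a $(1,1)$-type submodule of $M$, a contradiction. This gives an embedding $M_s(l,\l,\eta)\hookrightarrow M$, and since $l(M_s(l,\l,\eta))=2sm=l(M)$, we conclude $M\cong M_s(l,\l,\eta)$.
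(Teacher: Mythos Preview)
Your overall strategy coincides with the paper's: rule out $(1,1)$-type submodules, invoke the equal-multiplicity step from the proof of Lemma~\ref{4.19} to get $t=sm$, then climb the tube $\{M_j(l,\l,\eta)\}$ via the AR sequences of Proposition~\ref{4.20}(5), exactly as in \cite[Theorem~4.16]{Ch3}. Stage one is fine, though the phrase ``every nonzero submodule of such $M$ is again of the form $T_j$ or $\ol T_j$'' overshoots (semisimple submodules exist); what you actually need---and what Propositions~\ref{4.9}(2),~\ref{4.10}(2) do give---is that the unique $(m,m)$-type submodule of $T_t(l',\mu)$ is $T_m(l',\mu)$, which already contradicts Lemma~\ref{4.13}(4).

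Stage two, however, has a real gap in the injectivity step. The AR lifting from $0\to M_j\to M_{j-1}\oplus M_{j+1}\to M_j\to 0$ yields a pair $(\psi,\phi_{j+1})$ with $\psi g_{j-1}+\phi_{j+1}f_j=\phi_j$, \emph{not} $\phi_{j+1}f_j=\phi_j$; so your claim ``$\ker\phi_j\cap f_j(M_{j-1})=0$ by induction on injectivity of $\phi_{j-1}$'' does not follow. What survives is only $\phi_j|_{M_1}=\iota$ (since $g_{j-1}$ annihilates the bottom $M_1$), whence $\ker\phi_j$ embeds in $M_j/M_1\cong M_{j-1}$ rather than in $M_1$. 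Even granting your embedding $\ker\phi_j\hookrightarrow M_1$, the final assertion ``any such nonzero kernel would produce a $(1,1)$-type submodule of $M$'' is not explained: a nonzero kernel could be a simple or a $(1,2)$-type piece of $M_j$, and nothing you have said forces its image or complement to yield a $(1,1)$-type submodule inside $M$. The paper avoids these issues by citing \cite[Theorem~4.16]{Ch3} directly; if you want to write the argument out, you must either control the full submodule lattice of $M_j$ beyond Proposition~\ref{4.20}(2,3), or argue more structurally via the shape of the homogeneous tube and irreducible maps.
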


\begin{proof}
Assume that $M$ contains a submodule isomorphic to $M_1(l,\l,\eta)$ for some $1\<l\<n-1$, $\l\in I_{l}$ and $\eta\in\Bbbk^{\times}$. Then $t\>m$. By Proposition \ref{4.12}, Lemma \ref{4.13}(4), and Propositions \ref{4.9}(2,3) and \ref{4.10}(2,3), one can verify that $M$ does not contain any submodule of $(1,1)$-type. Hence, by the proof of Lemma \ref{4.19}, $m$ divides $t$, so that   $t=sm$ for some $s\>1$. An argument similar to that in \cite[Theorem 4.16]{Ch3} then  shows that $M\cong M_t(l,\l,\eta)$.
\end{proof}

Summarizing the discussions in the last section and this section, we obtain the classification of finite dimensional indecomposable $D(H_{\mathcal D})$-modules as follows.

\begin{theorem}\label{4.23}
Assume $m>1$. A complete set of representatives of isomorphism classes of finite dimensional indecomposable $D(H_{\mathcal D})$-modules is given by
$$\left\{\begin{array}{c}
V(l',\l'), P(l,\l), \Omega^{\pm s}V(l,\l),\\
T_s(l,\l), \ol{T}_s(l,\l), M_s(l,\l,\eta)\\
\end{array}\left|\begin{array}{c}
1\<l\<n-1, 1\<l'\<n, s\>1,\\
\l \in I_l, \l'\in I_{l'}, \eta\in\Bbbk^{\times}\\
\end{array}\right.\right\}.$$
\end{theorem}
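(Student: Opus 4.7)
The plan is to exhaust all isomorphism classes of indecomposable $D(H_{\mathcal D})$-modules by stratifying them according to Loewy length and then, within the Loewy length two stratum, according to $(s,t)$-type. By Lemma \ref{3.14}, an indecomposable module of Loewy length one is isomorphic to some $V(l',\l')$ with $1\<l'\<n$ and $\l'\in I_{l'}$, while one of Loewy length three is isomorphic to some $P(l,\l)$ with $1\<l\<n-1$ and $\l\in I_l$. This reduces the problem to the analysis of indecomposables of Loewy length two, which is exactly the content of Section \ref{s4}.

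For an indecomposable $M$ of Loewy length two I would invoke Lemma \ref{4.3} and Proposition \ref{4.4} to dispose of the asymmetric cases: if $M$ is of $(s,t)$-type with $s\neq t$, then $\{s,t\}=\{r,r+1\}$ and $M$ is isomorphic to $\Omega^{r}V(l,\l)$ or $\Omega^{-r}V(l,\l)$ for some $1\<l\<n-1$ and $\l\in I_l$. It then remains to analyze the symmetric case where $M$ is of $(t,t)$-type. Here the dichotomy is whether $M$ contains a submodule of $(1,1)$-type. If it does, Proposition \ref{4.12} identifies $M$ with some $T_t(l,\l)$ or $\overline{T}_t(l,\l)$. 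Otherwise, Lemma \ref{4.19} guarantees that $M$ contains a submodule isomorphic to $M_1(l,\l,\eta)$ for some $1\<l\<n-1$, $\l\in I_l$, and $\eta\in\Bbbk^{\times}$, and then Proposition \ref{4.22} forces $t=sm$ and $M\cong M_s(l,\l,\eta)$. Combining these cases yields that every finite-dimensional indecomposable is in the claimed list.

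To finish, I would show that the listed modules are pairwise non-isomorphic. The three Loewy-length strata are visibly disjoint, so it suffices to argue within each family and across the Loewy length two families. Within Loewy length one and three, Propositions \ref{3.6} and \ref{3.10} (together with Corollary \ref{3.12}) give distinctness. Within Loewy length two, the $(s,t)$-type is an invariant that separates the $\Omega^{\pm r}V(l,\l)$ series from the $(t,t)$-type families $T_s$, $\overline{T}_s$, $M_s$. The modules $\Omega^{\pm s}V(l,\l)$ are mutually distinguished by their sockets and heads, together with the fact that the Nakayama-like action of $\Omega^{\pm 1}$ is injective on isomorphism classes on the non-projective stable category. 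The modules $T_s(l,\l)$, $\overline{T}_s(l,\l)$ are distinguished from each other and from $M_{s'}(l',\l',\eta)$ by Corollary \ref{4.11}, Lemma \ref{4.13}(4), and the fact that only $M_{s'}$ fails to contain a $(1,1)$-type submodule. Distinctness within the $M$-family is Corollary \ref{4.21}.

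The main obstacle is the exhaustiveness step for $(t,t)$-type modules without a $(1,1)$-submodule, which relies on Lemma \ref{4.19}; that lemma was the essential technical input, producing an explicit copy of $M_1(l,\l,\eta)$ inside $M$ via a spectral decomposition along the character grading $M=\oplus_\mu M_\mu$ and an eigenvector argument for the cyclic product $X=X_{m-1}\cdots X_0$ of transition matrices. Once this is in hand, together with Proposition \ref{4.22}, the classification falls out by straightforward bookkeeping, and the theorem follows.
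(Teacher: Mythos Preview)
Your approach is correct and coincides with the paper's implicit proof, which consists precisely of assembling Lemma \ref{3.14}, Proposition \ref{4.4}, Proposition \ref{4.12}, Lemma \ref{4.19}, and Proposition \ref{4.22} as you describe. One small correction: the listed modules are not literally pairwise non-isomorphic, since Corollary \ref{4.21} gives $M_s(l,\l,\eta)\cong M_s(l,\tau^k(\l),\eta)$ for every integer $k$; the list is exhaustive but carries this $\tau$-orbit redundancy in the $M$-family, so your appeal to Corollary \ref{4.21} for ``distinctness'' should instead be read as the characterization of exactly which parameters yield isomorphic modules.
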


\subsection{The case of $m=1$}\label{m=1}
~

Throughout this subsection, unless otherwise stated, assume $m=1$. In this case, $\mathcal D$ is of nilpotent type by Remark \ref{3.4}. Let $\infty$ be a symbol with $\infty\notin\Bbbk$ and set $\ol{\Bbbk}:=\Bbbk\cup\{\infty\}$.

Let $1\<l\<n-1$ and $\l\in I_l$. Let $\{v_0, v_1, \cdots, v_{n-1}, u_0, u_1, \cdots, u_{n-1}\}$ be a standard basis of $P(l,\l)$ as given previousely. For any $\eta\in\ol{\Bbbk}$,  define a subspace $W_1(l,\l, \eta)\subset P(l,\l)$  by
$$W_1(l,\l,\eta):=\left\{ \begin{array}{ll}
{\rm span}\{u_0+\eta v_l, \cdots, u_{n-l-1}+\eta v_{n-1}, u_{n-l}, \cdots, u_{n-1}\}, & \eta\in\Bbbk, \\
{\rm span}\{v_l, \cdots, v_{n-1}, u_{n-l}, \cdots, u_{n-1}\}, & \eta=\infty.
\end{array}\right. $$
Since $m=1$, we have $\s^{-1}=\s$ and ${\rm ord}(\phi)=n$. One can easily check that $W_1(l,\l,\eta)$ is a submodule of $P(l,\l)$, with
$$ \mathrm{soc}W_1(l,\l,\eta)\cong V(l,\l)\ \mathrm{and}\  W_1(l,\l,\eta)/{\text{soc}}W_1(l,\l,\eta)\cong V(n-l,\s(\l))$$
 for any $\eta\in\ol{\Bbbk}$.
Hence, $W_1(l,\l,\eta)$ is an indecomposable module of $(1,1)$-type.

\begin{lemma}\label{4.24}
Let $1\<l\<n-1$, $\l\in I_l$ and $\eta\in\Bbbk$. Then $W_1(l,\l,\infty)\ncong W_1(l,\l,\eta)$.
\end{lemma}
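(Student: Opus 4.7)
The plan is to distinguish the two modules by the isomorphism invariant $\dim M^x$, where $M^x=\{v\in M\mid xv=0\}$. The action of $x$ on $P(l,\l)$ (in the nilpotent case) sends $v_i\mapsto v_{i+1}$, $u_i\mapsto u_{i+1}$ for $0\<i\<n-2$, with $xv_{n-1}=xu_{n-1}=0$, so computing $M^x$ on each submodule just amounts to tracking the $x$-Jordan structure of the given $\Bbbk$-basis.

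For $W_1(l,\l,\infty)={\rm span}\{v_l,\dots,v_{n-1},u_{n-l},\dots,u_{n-1}\}$, the action of $x$ decomposes into two independent chains $v_l\to v_{l+1}\to\cdots\to v_{n-1}\to 0$ and $u_{n-l}\to\cdots\to u_{n-1}\to 0$; hence $W_1(l,\l,\infty)^x=\Bbbk v_{n-1}\oplus\Bbbk u_{n-1}$ is two-dimensional. For $W_1(l,\l,\eta)$ with $\eta\in\Bbbk$, setting $e_i:=u_i+\eta v_{l+i}$ for $0\<i\<n-l-1$ and $f_j:=u_{n-l+j}$ for $0\<j\<l-1$, a direct computation using $xv_{n-1}=0$ yields $xe_i=e_{i+1}$ for $i\<n-l-2$, $xe_{n-l-1}=u_{n-l}=f_0$, and $xf_j=f_{j+1}$ (with $xf_{l-1}=0$). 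Thus the basis forms a single $x$-chain of length $n$, and $W_1(l,\l,\eta)^x=\Bbbk u_{n-1}$ is one-dimensional.

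Since $\dim M^x$ is preserved under $D(H_{\mathcal D})$-module isomorphism, the inequality $\dim W_1(l,\l,\infty)^x=2\neq 1=\dim W_1(l,\l,\eta)^x$ forces $W_1(l,\l,\infty)\ncong W_1(l,\l,\eta)$, completing the argument. There is no real obstacle here; the only care needed is to verify that the candidate basis of $W_1(l,\l,\eta)$ really forms a single $x$-chain (the key being the cancellation $xv_{n-1}=0$, which fuses the $v$-chain into the $u$-chain at the step $e_{n-l-1}\mapsto f_0$), and that the invariant $\dim M^x$ does not happen to coincide with some other module in the classification, which it does not here since we only need to separate two specific modules.
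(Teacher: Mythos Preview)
Your proof is correct and follows essentially the same approach as the paper: both compute the invariant $\dim M^x$ and observe that $W_1(l,\l,\infty)^x=\Bbbk v_{n-1}\oplus\Bbbk u_{n-1}$ is two-dimensional while $W_1(l,\l,\eta)^x=\Bbbk u_{n-1}$ is one-dimensional for $\eta\in\Bbbk$. Your version is slightly more explicit in tracking the $x$-chain structure, but the argument is the same.
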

\begin{proof}
Using the above notations, we have $P(l,\l)^x=\Bbbk v_{n-1}+\Bbbk u_{n-1}$. Hence
$$W_1(l,\l,\eta)^x\\=\Bbbk u_{n-1}\ \mathrm{and}\ W_1(l,\l,\infty)^x=\Bbbk v_{n-1}+\Bbbk u_{n-1}.$$
 This shows  that ${\rm dim}W_1(l,\l,\eta)^x=1$ and ${\rm dim}W_1(l,\l,\infty)^x=2$, and  therefore $W_1(l,\l,\infty)\ncong W_1(l,\l,\eta)$.
\end{proof}

\begin{lemma}\label{4.25}
Let $M$ be an indecomposable $D(H_{\mathcal D})$-module of $(1,1)$-type. Then $M$ is isomorphic to $W_{1}(l,\l,\eta)$ for some $1\<l\<n-1$, $\l\in I_l$ and $\eta\in \overline{\Bbbk}$.
\end{lemma}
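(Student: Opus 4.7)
The plan is to realize $M$ as a submodule of a single indecomposable projective module $P(l,\lambda)$, and then classify such submodules via points of $\mathbb{P}^1(\Bbbk)$. Since $M$ is of $(1,1)$-type, its socle is simple, say ${\rm soc}(M)\cong V(l,\lambda)$ for some $1\<l\<n-1$ and $\lambda\in I_l$. Because $m=1$, the power $\tau$ is trivial, so Proposition \ref{4.1} gives $I(M)\cong s\,P(l,\lambda)$ for some $s\in\mathbb{N}$; matching socles forces $s=1$, and consequently $I(M)\cong P(l,\lambda)$. Viewing $M\subseteq P(l,\lambda)$, the condition ${\rm rl}(M)=2$ together with Corollary \ref{3.13} places $M$ inside ${\rm soc}^2 P(l,\lambda)={\rm rad}\,P(l,\lambda)$ and enforces ${\rm soc}(M)={\rm soc}\,P(l,\lambda)$.

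Next I would pass to the canonical projection $\pi:{\rm soc}^2 P(l,\lambda)\to {\rm soc}^2P(l,\lambda)/{\rm soc}\,P(l,\lambda)$. By Lemma \ref{3.11}(2), the quotient is $V(n-l,\sigma(\lambda))\oplus V(n-l,\sigma^{-1}(\lambda))$. Since $m=1$, $\sigma$ has order $2m=2$, so $\sigma^{-1}=\sigma$, and the quotient becomes $2V(n-l,\sigma(\lambda))$. The image $\pi(M)=M/{\rm soc}(M)\cong V(n-l,\sigma(\lambda))$ is therefore a simple submodule of $2V(n-l,\sigma(\lambda))$, and such simple submodules are in bijection with the lines in the two-dimensional space ${\rm Hom}_{D(H_{\mathcal D})}(V(n-l,\sigma(\lambda)),2V(n-l,\sigma(\lambda)))\cong\Bbbk^2$, i.e.\ with the projective line $\mathbb{P}^1(\Bbbk)\cong\overline{\Bbbk}$.

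Finally I would make this bijection explicit in the standard basis $\{v_0,\ldots,v_{n-1},u_0,\ldots,u_{n-1}\}$ of $P(l,\lambda)$ (nilpotent case). The quotient ${\rm soc}^2 P(l,\lambda)/{\rm soc}\,P(l,\lambda)$ has basis $\{\bar v_l,\ldots,\bar v_{n-1},\bar u_0,\ldots,\bar u_{n-l-1}\}$, with the two simple summands spanned respectively by the $\bar v$'s and the $\bar u$'s. Every simple submodule is generated either by $\bar u_0+\eta\bar v_l$ with $\eta\in\Bbbk$ or by $\bar v_l$ (the point at infinity); applying $x^i$ shows its preimage under $\pi$ is precisely
\[
{\rm span}\{u_0+\eta v_l,\ldots,u_{n-l-1}+\eta v_{n-1},u_{n-l},\ldots,u_{n-1}\}\quad\text{or}\quad {\rm span}\{v_l,\ldots,v_{n-1},u_{n-l},\ldots,u_{n-1}\},
\]
i.e.\ $W_1(l,\lambda,\eta)$ with $\eta\in\Bbbk$ or $\eta=\infty$. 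Since $M=\pi^{-1}(\pi(M))$, we conclude $M\cong W_1(l,\lambda,\eta)$ for a unique $\eta\in\overline{\Bbbk}$.

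The main technical point to keep in mind is the reduction $\sigma^{-1}=\sigma$ arising from $m=1$, which is what collapses the two non-isomorphic composition factors of ${\rm rad}\,P(l,\lambda)/{\rm soc}\,P(l,\lambda)$ into an isotypic component and thus produces a continuous family of non-isomorphic submodules parametrized by $\overline{\Bbbk}$; the rest is a direct identification of $\mathbb{P}^1$-points with the explicit modules $W_1(l,\lambda,\eta)$.
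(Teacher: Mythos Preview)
Your proposal is correct and follows essentially the same line as the paper: embed $M$ into $P(l,\lambda)$ via its simple socle, observe that $M\subseteq{\rm soc}^2P(l,\lambda)$ with ${\rm soc}M={\rm soc}P(l,\lambda)$, and then classify $M$ by the simple line it determines in ${\rm soc}^2P(l,\lambda)/{\rm soc}P(l,\lambda)\cong 2V(n-l,\sigma(\lambda))$. The only cosmetic difference is that the paper works directly in the weight space $({\rm soc}^2P(l,\lambda))_{\lambda\phi^l}=\Bbbk u_0\oplus\Bbbk v_l$ to pick out a nonzero element $\alpha u_0+\beta v_l\in M$ and then shows the cyclic submodule it generates is all of $M$, whereas you phrase the same dichotomy as a point of $\mathbb{P}^1(\Bbbk)$ and recover $M$ as $\pi^{-1}(\pi(M))$; these are the same computation. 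One small remark: the uniqueness of $\eta$ that you assert at the end is not part of this lemma (it is Proposition~\ref{4.26}), so you may want to drop the word ``unique'' here.
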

\begin{proof}
Since $M$ is an indecomposable module of $(1,1)$-type, its socle ${\rm soc}M$ is simple but not projective. Hence, ${\rm soc}M\cong V(l,\l)$ for some $1\<l\<n-1$ and $\l\in I_l$, and  $P(l,\l)$ is an injective envelope of $M$. We may assume that $M$ is a submodule of $P(l,\l)$. Then ${\rm soc}M={\rm soc}P(l,\l)$, $M/{\rm soc}M\subset{\rm soc}^2P(l,\l)/{\rm soc}P(l,\l)$ and $M\subset{\rm soc}^2P(l,\l)$.

By the proof of Proposition \ref{3.10}, ${\rm soc}^2P(l,\l)/{\rm soc}P(l,\l)\cong 2V(n-l,\s(\l))$. Hence,  ${\rm head}M\cong V(n-l,\s(\l))$, and so
$${\rm head}M=\oplus_{j=0}^{n-l-1}({\rm head}M)_{\s(\l)\phi^j}=\oplus_{j=l}^{n-1}({\rm head}M)_{\l\phi^j}.$$
Moreover, ${\rm dim}(({\rm head}M)_{\l\phi^j})=1$ for all $l\<j\<n-1$.
By Lemma \ref{4.6}(5), $({\rm head}M)_{\l\phi^l}=\pi(M_{\l\phi^l})$ and $({\rm soc}^2P(l,\l)/{\rm soc}P(l,\l))_{\l\phi^l}=\pi(({\rm soc}^2P(l,\l))_{\l\phi^l})$,
where $\pi: P(l,\l)\ra P(l,\l)/{\rm soc}P(l,\l)$ is the canonical epimorphism.

Let $\{v_0,v_2,\cdots, v_{n-1}, u_0,u_1,\cdots,u_{n-1}\}$ be the standard basis of $P(l,\l)$. Then by the proof of Proposition \ref{3.10}, we have
$${\rm soc}^2P(l,\l)={\rm span}\{v_l, \cdots, v_{n-1}, u_0, u_1, \cdots, u_{n-1}\}.$$
 Hence $({\rm soc}^2P(l,\l))_{\l\phi^l}=\Bbbk v_l+\Bbbk u_0$. Since $0\neq M_{\l\phi^l}\subseteq({\rm soc}^2P(l,\l))_{\l\phi^l}$, there exist scales $\a, \b\in\Bbbk$ such that
  $$0\neq\a u_0+\b v_l\in M_{\l\phi^l}.$$
  If $\a=0$, then $v_l\in M$, and hence $\langle v_l\rangle\subseteq M$. However, \\
  $\langle v_l\rangle={\rm span}\{v_l,\cdots, v_{n-1}, u_{n-l}, \cdots, u_{n-1}\}$ is a module of $(1,1)$-type. Thus,  $\langle v_l\rangle=M$, and so $M\cong W_1(l,\l,\infty)$.

If $\a\neq 0$, then $u_0+\eta v_l\in M$, where $\eta=\a^{-1}\b\in\Bbbk$. In this case, a similar argument  shows that $M\cong W_1(l,\l,\eta)$.
\end{proof}

\begin{proposition}\label{4.26}
Let $1\<l,l'\<n-1$, $\l\in I_l$, $\l'\in I_{l'}$ and $\eta, \eta'\in\ol{\Bbbk}$. Then $W_{1}(l,\l,\eta)\cong W_{1}(l',\l',\eta')$ if and only if $l=l'$, $\l=\l'$ and $\eta=\eta'$.
\end{proposition}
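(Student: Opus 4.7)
The backward direction is immediate from the definition. For the forward direction, suppose $f\colon W_1(l,\l,\eta)\stackrel{\sim}{\to} W_1(l',\l',\eta')$. Taking socles and applying Proposition \ref{3.6}, the isomorphism $V(l,\l)\cong\mathrm{soc}\,W_1(l,\l,\eta)\cong\mathrm{soc}\,W_1(l',\l',\eta')\cong V(l',\l')$ forces $l=l'$ and $\l=\l'$ immediately. So the only remaining task is to show $\eta=\eta'$.

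Next I separate the cases involving $\infty$. If exactly one of $\eta,\eta'$ lies in $\Bbbk$ and the other equals $\infty$, \leref{4.24} (applied via the $x$-fixed subspace invariant) gives the contradiction $\mathrm{dim}\,W_1(l,\l,\cdot)^x = 1 \neq 2$. Hence either both are $\infty$ (in which case $\eta=\eta'=\infty$ and we are done) or both belong to $\Bbbk$. From here on, assume $\eta,\eta'\in\Bbbk$.

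The main step is a weight-space computation. Using $m=1$ so $\mathrm{ord}(\phi)=n$ and $\phi^n=1$, the elements $w_j:=u_j+\eta v_{l+j}$ for $0\<j\<n-l-1$ and $u_{n-l},\ldots,u_{n-1}$ all sit in distinct one-dimensional weight spaces $W_1(l,\l,\eta)_{\l\phi^{l+j}}$ and $W_1(l,\l,\eta)_{\l\phi^{k}}$ respectively (exactly as in the weight analysis used in the proofs of \prref{4.13} and \prref{4.17}); and the same holds for $W_1(l,\l,\eta')$ with $w_j':=u_j+\eta'v_{l+j}$. Since $f$ preserves weights by \leref{4.6}(5), there exist scalars $c_j,c_0'\in\Bbbk^{\times}$ with $f(w_j)=c_jw_j'$ and $f(u_{n-l})=c_0'u_{n-l}$. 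The $x$-action then forces all these scalars to coincide: from $xw_j=w_{j+1}$ for $0\<j\<n-l-2$ and $xw_{n-l-1}=u_{n-l}$ in both modules, equivariance gives $c_0=c_1=\cdots=c_{n-l-1}=c_0'$, and propagating by $x$ up the chain $u_{n-l},\ldots,u_{n-1}$ yields $f(u_{n-1})=c_0u_{n-1}$.

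Finally I apply $\xi$ to $w_0$ to pin down $\eta$. From the standard-basis formulas one has $\xi w_0 = \xi u_0 + \eta\xi v_l = \eta u_{n-1}$ in $W_1(l,\l,\eta)$ and $\xi w_0' = \eta' u_{n-1}$ in $W_1(l,\l,\eta')$. Then
\begin{equation*}
\eta c_0 u_{n-1} = f(\xi w_0) = \xi f(w_0) = c_0\xi w_0' = c_0\eta' u_{n-1},
\end{equation*}
and since $c_0\neq 0$ this gives $\eta=\eta'$, completing the proof. The only mildly delicate point is the bookkeeping of weights to ensure each weight space is one-dimensional (so that $f$ acts by scalars on each basis vector), but this is straightforward from the explicit $G\times\G$-action and the fact that $\phi$ has order $n$ when $m=1$; no genuine obstacle arises.
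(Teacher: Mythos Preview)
Your proof is correct and follows essentially the same approach as the paper's: reduce to $l=l'$, $\l=\l'$ via the socle, handle $\infty$ via Lemma~\ref{4.24}, then use weight spaces together with the $x$- and $\xi$-actions to force $\eta=\eta'$. The paper's version is marginally more economical---it works only with the single weight space $W_1(l,\l,\eta)_{\l\phi^l}=\Bbbk(u_0+\eta v_l)$ and applies $x^{n-1}$ in one step to obtain $f(u_{n-1})=\alpha u_{n-1}$, rather than propagating scalars through the whole chain---but the argument is the same in substance.
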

\begin{proof}
If $l=l'$, $\l=\l'$ and $\eta=\eta'$, then clearly, $W_{1}(l,\l,\eta)\cong W_{1}(l',\l',\eta')$.

Conversely, assume $W_{1}(l,\l,\eta)\cong W_{1}(l',\l',\eta')$. Then ${\rm soc}W_{1}(l,\l,\eta)\cong{\rm soc}W_{1}(l',\l',\eta')$, and hence $V(l,\l)\cong V(l',\l')$. By Proposition \ref{3.2}(1), $l=l'$ and $\l=\l'$, and so
$$W_{1}(l,\l,\eta)\cong W_{1}(l,\l,\eta').$$
 By Lemma \ref{4.24}, it is enough to consider the case where $\eta$ and $\eta'$ are contained in $\Bbbk$.
Let $f: W_{1}(l,\l,\eta)\ra W_{1}(l,\l,\eta')$ be a module isomorphism. Then
$$f(W_{1}(l,\l,\eta)_{\l\phi^j})=W_{1}(l,\l,\eta')_{\l\phi^j} \
\mbox{for all}\ 0\<j\<n-1.$$  Using the bases of $W_{1}(l,\l,\eta)$ and $W_{1}(l,\l,\eta')$ given before, we have
$$W_{1}(l,\l,\eta)_{\l\phi^l}=\Bbbk(u_0+\eta v_l), \ W_{1}(l,\l,\eta')_{\l\phi^l}=\Bbbk(u_0+\eta'v_l).$$
 Hence, there exists  some $\a\in\Bbbk^{\times}$  such that
 $$f(u_0+\eta v_l)=\a(u_0+\eta' v_l).$$
 Applying $x^{n-1}$, we obtain
 $$f(x^{n-1}(u_0+\eta v_l))=x^{n-1}f(u_0+\eta v_l),$$
 which implies that $f(u_{n-1})=\a u_{n-1}$.

 Similarly, applyinh $\xi$, we obtain
$$f(\xi(u_0+\eta v_l))=\xi f(u_0+\eta v_l),$$
which yields  $\eta\a u_{n-1}=\eta'\a u_{n-1}$. This implies $\eta=\eta'$.
\end{proof}

\begin{remark}\label{4.27}
Let $1\<l\<n-1$, $\l\in I_l$ and $\eta\in\ol{\Bbbk}$. Then we have
$$
\Omega^{-1}W_1(l,\l,\eta)\cong W_1(n-l,\s(\l),-\eta), \ \Omega W_1(l,\l,\eta)\cong W_1(n-l,\s(\l),-\eta),$$
where we regard $-\infty=\infty$. Hence $\Omega^2W_1(l,\l,\eta)\cong W_1(l,\l,\eta)$. Moreover, one can check that ${\rm End}_{D(H_{\mathcal D})}(W_1(l,\l,\eta))\cong\Bbbk$ and
${\rm Hom}_{D(H_{\mathcal D})}(W_1(l,\l,\eta),W_1(l,\l,\eta')))=0$ for any $\eta'\in\ol{\Bbbk}$ with $\eta'\neq\eta$.
\end{remark}

Similarly to \cite[Theorem 4.18]{SunChen}, one can prove the following proposition.

\begin{proposition}\label{4.28}
For any $1\<l\<n-1$, $\l\in I_l$, $\eta \in \overline{\Bbbk}$ and $ t\in \mathbb{Z}$ with $t\>1$, there is an indecomposable $D(H_{\mathcal D})$-module $W_t(l,\l,\eta)$ of $(t,t)$-type. We have the following properties:
\begin{enumerate}
\item[(1)] $\Omega^2W_t(l,\l,\eta)\cong W_t(l,\l,\eta)$,  ${\rm soc}W_t(l,\l,\eta)\cong tV(l,\l)$ and\\
${\rm head}W_t(l,\l,\eta)\cong tV(n-l,\s(\l))$.
\item[(2)] For any $1\<j<t$, $W_t(l,\l,\eta)$ contains a unique submodule of $(j,j)$-type, which is isomorphic to $W_j(l,\l,\eta)$ and the quotient module of $W_t(l,\l,\eta)$ modulo the submodule of $(j,j)$-type is isomorphic to $W_{t-j}(l,\l,\eta)$.
\item[(3)] For any $1\<j<t$, the unique submodule of $(j,j)$-type of $W_t(l,\l,\eta)$ is contained in that of $(j+1,j+1)$-type.
\item[(4)] There are AR-sequences
\begin{eqnarray*}
&0\ra W_1(l,\l,\eta)\xrightarrow{f_1}
W_2(l,\l,\eta)\xrightarrow{g_1}
W_1(l,\l,\eta)\ra 0,\\
&0\ra W_t(l,\l,\eta)\xrightarrow{\left(\begin{matrix}
                                       g_{t-1} \\
                                       f_t \\
                                     \end{matrix}\right)}
W_{t-1}(l,\l,\eta)\oplus W_{t+1}(l,\l,\eta)\xrightarrow{(f'_{t-1}, g_t)}
W_t(l,\l,\eta)\ra 0
\end{eqnarray*}
for $t\>2$.
\end{enumerate}
\end{proposition}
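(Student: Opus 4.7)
The plan is to follow the template of \cite{SunChen}, Theorem 4.18, adapted to $m=1$ (where $\mathcal D$ is nilpotent by Remark \ref{3.4}, $\s^2=\tau=\mathrm{id}$, and $\mathrm{ord}(\phi)=n$). I would construct $W_t(l,\l,\eta)$ explicitly as a submodule of $P_t:=tP(l,\l)$, read off the socle filtration, and then deduce indecomposability, $\Omega^2$-periodicity and the AR-sequences in turn. Concretely, let $\{v_j^s, u_j^s : 0\<j\<n-1\}$ be the standard basis of the $s$-th copy of $P(l,\l)$ inside $P_t$, for $0\<s\<t-1$, with the convention $v_j^{-1}=u_j^{-1}=0$. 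For $\eta\in\Bbbk$ set
$$w_j^s:=\left\{\begin{array}{ll} u_j^s+\eta v_{j+l}^s+v_{j+l}^{s-1}, & 0\<j\<n-l-1,\\ u_j^s, & n-l\<j\<n-1,\end{array}\right.$$
and for $\eta=\infty$ replace the top line by $w_j^s:=v_{j+l}^s+v_{j+l}^{s-1}$. Put $W_t(l,\l,\eta):=\mathrm{span}\{w_j^s\}$. A direct verification against the nilpotent-case $D(H_{\mathcal D})$-action on $P(l,\l)$, using the identity $\a_j(n-l,\s^{-1}(\l))=\a_j(n-l,\s(\l))$ (which holds when $\s^2=\mathrm{id}$), shows that $W_t(l,\l,\eta)$ is stable under the action, and for $t=1$ it reproduces the already-defined $W_1(l,\l,\eta)$.

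Reading off the basis, $\mathrm{soc}\,W_t(l,\l,\eta)=\mathrm{span}\{w_j^s : n-l\<j\<n-1\}\cong tV(l,\l)$ and $W_t/\mathrm{soc}\,W_t\cong tV(n-l,\s(\l))$, so $W_t(l,\l,\eta)$ is of $(t,t)$-type. For $1\<j<t$, the subspace $W_j':=\mathrm{span}\{w_i^s : 0\<s\<j-1\}$ is a submodule of $(j,j)$-type isomorphic to $W_j(l,\l,\eta)$, with quotient $W_t/W_j'\cong W_{t-j}(l,\l,\eta)$; these evidently form an increasing chain, proving part (3). To obtain uniqueness of the $(j,j)$-type submodule (and hence indecomposability), I would analyse cyclic submodules $\langle v\rangle$ generated by weight vectors $v\in W_t^{\l\phi^l}=\oplus_s\Bbbk w_0^s$: a short computation of $\xi v$ and $x^{n-l}v$ shows that unless $v$ is supported on the first $j$ components (i.e. $v\in W_j'\cap W_t^{\l\phi^l}$), the socle of $\langle v\rangle$ has length strictly greater than $j$, forcing $\langle v\rangle$ to exceed $(j,j)$-type. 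Iterating this single-layer argument, which parallels the proof of Lemma \ref{4.25}, pins down every $(j,j)$-type submodule as $W_j'$. Indecomposability of $W_t(l,\l,\eta)$ is then immediate: any nonzero direct summand contains a simple submodule, hence the unique $(1,1)$-type submodule $W_1'$, which by uniqueness cannot split off.

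For periodicity, I would compute $\Omega^{-1}W_t(l,\l,\eta)\cong P_t/W_t(l,\l,\eta)$ by selecting basis representatives in the quotient and running through the action; the computation mirrors Remark \ref{4.27} (the $t=1$ case) and yields $\Omega^{-1}W_t(l,\l,\eta)\cong W_t(n-l,\s(\l),-\eta)$, with the convention $-\infty:=\infty$. Iterating and invoking $\s^2=\mathrm{id}$ gives $\Omega^{-2}W_t(l,\l,\eta)\cong W_t(l,\l,\eta)$, and since $\Omega$ is an auto-equivalence of the stable module category, $\Omega^2W_t(l,\l,\eta)\cong W_t(l,\l,\eta)$ as well. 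The AR-sequences of (4) arise from the canonical inclusions $f_s:W_s\hookrightarrow W_{s+1}$ and projections $g_s:W_{s+1}\twoheadrightarrow W_s$ provided by the filtration; exactness is clear, and almost-splitness is verified either by the general theory of homogeneous stable tubes (forced by $\Omega^2\cong\mathrm{id}$ together with parts (2)--(3)) or by mimicking verbatim the AR-sequence argument of Proposition \ref{4.9}(4).

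The main obstacle I anticipate is the uniqueness step: rigorously executing the weight-space bookkeeping that pins down every $(j,j)$-type submodule as $W_j'$, uniformly in $\eta\in\ol{\Bbbk}$ so that the $\eta=\infty$ case is subsumed under the same argument as the affine case. Once uniqueness, and hence indecomposability, is secured, the periodicity computation and the identification of the AR-sequences reduce to routine variants of the arguments already developed in Propositions \ref{4.9}, \ref{4.10} and Remark \ref{4.27}.
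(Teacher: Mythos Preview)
Your proposal is correct and follows precisely the approach the paper indicates: the paper's entire proof is the single line ``Similarly to \cite[Theorem 4.18]{SunChen}, one can prove the following proposition,'' and your explicit construction of $W_t(l,\l,\eta)$ inside $tP(l,\l)$ together with the weight-space analysis is exactly that template specialized to $m=1$. One small caution: your indecomposability sentence (``any nonzero direct summand contains a simple submodule, hence the unique $(1,1)$-type submodule $W_1'$'') elides a step, since a simple submodule is not the same as a $(1,1)$-type submodule; the clean route is to first rule out semisimple summands via Corollary~\ref{4.2}(2), then observe that your uniqueness argument actually shows $W_1'$ sits inside \emph{every} Loewy-length-two submodule of $W_t$ (because any such submodule meets the weight space $(W_t)_{\l\phi^l}$ nontrivially, and the relation $\xi w_0^s=\eta w_{n-1}^s+w_{n-1}^{s-1}$ forces the generated submodule to contain $W_1'$), so distinct nonzero summands would intersect.
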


\begin{proposition}\label{4.29}
Let $M$ be an indecomposable left $D(H_{\mathcal D})$-module of $(t,t)$-type. Then $M$ contains a submodule of $(1,1)$-type, and consequently $M\cong W_t(l,\l,\eta)$ for some $1\<l\<n-1$, $\l \in I_l$ and $\eta \in \overline{\Bbbk}$.
\end{proposition}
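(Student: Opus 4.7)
The plan is in two stages. Stage~1 establishes that $M$ contains a submodule of $(1,1)$-type; this is the main content of the proposition. Stage~2 then promotes this to an isomorphism $M\cong W_t(l,\l,\eta)$ by a standard inductive chain-of-submodules argument.

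For Stage~1 I argue by contradiction. Assume $M$ contains no submodule of $(1,1)$-type; in particular $t\>2$, since otherwise $M$ itself would be such a submodule. Because $m=1$, Remark~\ref{3.4} forces $\mathcal D$ to be of nilpotent type, and Proposition~\ref{4.1} specialises to $I(M)\cong tP(l,\l)$ for some $1\<l\<n-1$ and $\l\in I_l$. I then propose to run the construction carried out in Case~1 of Lemma~\ref{4.19} verbatim in the $m=1$ setting. The inputs that proof needs are (i) nilpotency of $\mathcal D$, (ii) the non-existence of $T_1(l,\l)$ and $\ol{T}_1(l,\l)$ submodules inside $M$, and (iii) the explicit shape of the injective envelope. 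Input~(i) is given, while (ii) follows from our hypothesis because both $T_1(l,\l)$ and $\ol{T}_1(l,\l)$ are themselves of $(1,1)$-type. The output of the construction is a submodule of $M$ isomorphic to $M_1(l,\tau^{-1}(\l),\eta)=M_1(l,\l,\eta)$ for some $\eta\in\Bbbk^{\times}$. But by Lemma~\ref{4.13}(2), $M_1(l,\l,\eta)$ is of $(m,m)=(1,1)$-type when $m=1$, contradicting the standing hypothesis. Hence $M$ admits a submodule of $(1,1)$-type.

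For Stage~2, Lemma~\ref{4.25} identifies any such $(1,1)$-type submodule of $M$ with some $W_1(l',\l',\eta')$. Matching socles against $I(M)\cong tP(l,\l)$ forces $l'=l$ and $\l'=\l$, so $M$ contains a copy of $W_1(l,\l,\eta)$ for some $\eta\in\ol{\Bbbk}$. I then imitate the inductive argument from the proof of Proposition~\ref{4.12} (and the reference \cite[Theorem 4.16]{Ch3}) with the family $\{W_j(l,\l,\eta)\}_j$ furnished by Proposition~\ref{4.28} in place of $T_j$ or $\ol{T}_j$: each $W_{j+1}(l,\l,\eta)$ is forced to embed into $M$ by Proposition~\ref{4.28}(2,3), producing a chain
\[
W_1(l,\l,\eta)\subset W_2(l,\l,\eta)\subset\cdots\subseteq M.
\]
The chain terminates at $W_t(l,\l,\eta)$ by the length count $l(W_t(l,\l,\eta))=nt=l(M)$, and then $M=W_t(l,\l,\eta)$ by indecomposability.

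The main anticipated obstacle is verifying that the long weight-space and eigenvector bookkeeping in Lemma~\ref{4.19} really does specialise cleanly to $m=1$. Since $\tau=\mathrm{id}$ and $\s=\s^{-1}$ in this case, the $(1,1)$-type submodules of $P(l,\l)$ form the full $\mathbb{P}^1(\Bbbk)$-family $\{W_1(l,\l,\eta):\eta\in\ol{\Bbbk}\}$ rather than only the pair $T_1,\ol{T}_1$ appearing in the $m>1$ picture; one must double-check that the weight-space constraints driving the construction — which in the $m>1$ setting only needed to forbid $T_1$ and $\ol{T}_1$ submodules — continue to run through when $(1,1)$-type submodules of every flavour are forbidden, and that no intermediate $W_1(l,\l,\eta')$ submodule appears partway through and short-circuits the eigenvector step in a way that would invalidate the contradiction.
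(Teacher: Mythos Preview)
Your proposal is correct and follows essentially the same route as the paper: the paper also appeals to an $m=1$ specialisation of the Lemma~\ref{4.19} construction to produce a $(1,1)$-type submodule (which is then identified with some $W_1(l,\l,\eta)$), and then invokes the chain argument of \cite[Theorem 4.16]{Ch3} via Proposition~\ref{4.28} to conclude $M\cong W_t(l,\l,\eta)$. Two minor cleanups: Lemma~\ref{4.13} lives in the $m>1$ subsection, so rather than citing it you should simply observe directly that the $mn=n$-dimensional submodule produced by the construction is of $(1,1)$-type (indeed it is a $W_1$); and your length count should read $l(W_t(l,\l,\eta))=2t=l(M)$, not $nt$.
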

\begin{proof}
First, by an argument similar to,  but much easier than,  the proof of Lemma \ref{4.19}, one can check that $M$ contain a submodule isomorphic to $W_1(l,\l,\eta)$ for some $1\<l\<n-1$, $\l\in I_l$ and $\eta\in\ol{\Bbbk}$. Then, similarly to \cite[Theorem 4.16]{Ch3}, one can show that $M\cong W_t(l,\l,\eta)$.
\end{proof}

Summarizing the above discussion, we obtain the classification of finite dimensional indecomposable modules over $D(H_{\mathcal D})$ as follows.

\begin{theorem}\label{4.30}
Assume $m=1$. A complete set of representatives  of isomorphism classes of finite dimensional indecomposable $D(H_{\mathcal D})$-modules is given by
$$\left\{\begin{array}{c}
V(l',\l'), P(l,\l),\\ \Omega^{\pm s}V(l,\l),
 W_s(l,\l,\eta)\\
\end{array}\left|\begin{array}{c}
1\<l\<n-1, 1\<l'\<n, s\>1,\\
\l \in I_l, \l'\in I_{l'}, \eta\in\overline{\Bbbk}.\\
\end{array}\right.\right\}.$$

\end{theorem}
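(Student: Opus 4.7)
The plan is to organize the argument by Loewy length, since by Corollary \ref{3.9}(3) every indecomposable $D(H_{\mathcal D})$-module $M$ satisfies $\mathrm{rl}(M)\in\{1,2,3\}$. Lemma \ref{3.14} handles the two extreme cases at once: $\mathrm{rl}(M)=1$ gives the simple family $V(l',\l')$ with $1\<l'\<n$ and $\l'\in I_{l'}$, while $\mathrm{rl}(M)=3$ gives the non-simple projective family $P(l,\l)$ with $1\<l\<n-1$ and $\l\in I_l$. Neither of these steps uses the assumption $m=1$.

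For $\mathrm{rl}(M)=2$ I would write $M$ as $(s,t)$-type and split according to whether $s=t$. If $s\neq t$, Proposition \ref{4.4} forces $|s-t|=1$ and identifies $M$ with either $\Omega^{-s}V(l,\l)$ (when $s<t$) or $\Omega^{t}V(l,\l)$ (when $s>t$) for some $1\<l\<n-1$ and $\l\in I_l$; these account for the $\Omega^{\pm s}V(l,\l)$ part of the list. If $s=t$, I would apply Proposition \ref{4.29} to conclude $M\cong W_t(l,\l,\eta)$ for suitable parameters. This is precisely where the assumption $m=1$ enters, via Remark \ref{3.4} (which forces $\mathcal D$ to be of nilpotent type) and via Lemma \ref{4.25} (which produces the seed $(1,1)$-type module $W_1(l,\l,\eta)$ inside $P(l,\l)$). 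Combining the two sub-cases gives exhaustiveness of the proposed list.

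It then remains to verify pairwise disjointness of the four families and injectivity of each parameterization. The invariant $\mathrm{rl}(M)$ separates simples, non-simple projectives, and the non-projective $\mathrm{rl}=2$ modules; within the latter layer the $(s,t)$-type invariant distinguishes $\Omega^{\pm s}V(l,\l)$ (of type $(s,s\pm 1)$ with $s\neq s\pm 1$) from $W_s(l,\l,\eta)$ (of type $(s,s)$). Within each family the uniqueness is already recorded by earlier results: Proposition \ref{3.2}(1) for simples; the fact that $P(l,\l)$ is the projective cover of its simple head, together with Lemma \ref{3.11}(1), for projectives; Proposition \ref{4.26} combined with Proposition \ref{4.28}(1,2) for the $W_t$ family, where matching $l(W_t)=2t$ yields $t$, matching the socle then yields $(l,\l)$, and $\eta$ is pinned down by restriction to the unique $(1,1)$-type submodule; and for the $\Omega^{\pm s}V(l,\l)$ modules the sign and magnitude of the superscript are recovered from the $(s,s\pm 1)$-type, after which the simple head (or socle) recovers $(l,\l)$ via Lemma \ref{4.5}.

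The main obstacle I anticipate is the non-trivial content packed into Proposition \ref{4.29}. Its argument proceeds in two stages: first, an adaptation of Lemma \ref{4.19} (substantially simpler here because $m=1$ eliminates the eigenvalue/diagonalization step) must be carried out to exhibit, inside any indecomposable $(t,t)$-type module $M$, a submodule isomorphic to some $W_1(l,\l,\eta)$; second, an induction on $t$ in the spirit of \cite[Theorem 4.16]{Ch3}, using uniqueness of lower-$(j,j)$-type submodules from Proposition \ref{4.28}(2,3), lifts this seed to an isomorphism $M\cong W_t(l,\l,\eta)$. Once that is settled, the theorem assembles formally from the classifications already established in the preceding sections.
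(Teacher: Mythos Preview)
Your proposal is correct and mirrors the paper's own approach: the paper proves Theorem \ref{4.30} simply by ``summarizing the above discussion,'' i.e., by combining Lemma \ref{3.14} for Loewy lengths $1$ and $3$, Proposition \ref{4.4} for the $(s,t)$-type case with $s\neq t$, and Proposition \ref{4.29} for the $(t,t)$-type case, together with the earlier uniqueness statements. One small comment: your citation of Lemma \ref{4.5} to recover $(l,\l)$ from $\Omega^{\pm s}V(l,\l)$ is slightly roundabout; it is cleaner to use directly that $\Omega$ is a bijection on isomorphism classes of non-projective indecomposables, so $\Omega^{s}V(l,\l)\cong\Omega^{s}V(l',\l')$ forces $V(l,\l)\cong V(l',\l')$.
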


By Theorem \ref{4.23} and Theorem \ref{4.30}, we  immediately obtain the following corollary.

\begin{corollary}\label{4.31}
In all cases, where $m>1$ or $m=1$,  the Drinfeld double $D(H_{\mathcal D})$ is of tame representation type.
\end{corollary}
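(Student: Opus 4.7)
Recall that a finite-dimensional algebra $A$ is of \emph{tame representation type} if for each $d\geq 1$ there exist finitely many $A$--$\Bbbk[t]$-bimodules $N_1^{(d)},\ldots,N_{r(d)}^{(d)}$, each free of finite rank as a right $\Bbbk[t]$-module, such that all but finitely many isomorphism classes of indecomposable $A$-modules of dimension $d$ are of the form $N_i^{(d)}\otimes_{\Bbbk[t]}\Bbbk[t]/(t-\eta)$ for some $i$ and some $\eta\in\Bbbk$. The plan is to read off tameness directly from the explicit classifications in \thref{4.23} and \thref{4.30}.

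The first step is to fix a dimension $d$ and enumerate the indecomposable $D(H_{\mathcal D})$-modules of dimension $d$ using the lists in \thref{4.23} and \thref{4.30}. The simples $V(l',\lambda')$ and projectives $P(l,\lambda)$ contribute only finitely many isomorphism classes (in any dimension), because $\Lambda$ is finite. For the syzygies $\Omega^{\pm s}V(l,\lambda)$, the dimension grows strictly with $|s|$ by \leref{4.3}, so only finitely many $(s,l,\lambda)$ yield dimension $d$; likewise, $T_s(l,\lambda)$ and $\overline{T}_s(l,\lambda)$ have dimension $2sn$ (from the $(s,s)$-type statements in \prref{4.9}(1) and \prref{4.10}(1)), and so only finitely many $(s,l,\lambda)$ contribute. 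Finally, $M_s(l,\lambda,\eta)$ is of $(sm,sm)$-type with simple factors of dimensions $l$ and $n-l$, so $\dim M_s(l,\lambda,\eta)=smn$; and $\dim W_s(l,\lambda,\eta)=sn$ in the $m=1$ case. In particular, for each fixed $d$, only finitely many triples $(s,l,\lambda)$ yield a family $\{M_s(l,\lambda,\eta)\}_{\eta}$ or $\{W_s(l,\lambda,\eta)\}_{\eta}$ in dimension $d$.

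The second step is to lift the scalar parameter $\eta$ to an indeterminate. For each relevant triple $(s,l,\lambda)$, I would construct a $D(H_{\mathcal D})$--$\Bbbk[t]$-bimodule $\widetilde{M}_s(l,\lambda)$ by repeating verbatim the construction of $M_s(l,\lambda,\eta)$ from \prref{4.20} (or of $W_s(l,\lambda,\eta)$ from \prref{4.28} when $m=1$), but working over the base ring $\Bbbk[t]$ and replacing every occurrence of $\eta$ by the indeterminate $t$. Concretely, for $s=1$ in the nilpotent case, the only formula in the description of $M_1(l,\lambda,\eta)$ that involves $\eta$ is
\[
xx^{m-1}_{n-l-1}=\eta\,x^{0}_{n-l},
\]
and in the non-nilpotent case the only such formula is
\[
xx^{m-1}_{n-1}=z_{l,\lambda}x^{m-1}_{0}+\eta\,x^{0}_{0};
\]
replacing $\eta$ by $t$ turns the underlying $\Bbbk$-vector space into a free $\Bbbk[t]$-module of rank $mn$ on which $D(H_{\mathcal D})$ still acts by $\Bbbk[t]$-linear maps, because the same straightforward verification now goes through over $\Bbbk[t]$. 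The analogous construction for general $s$ is obtained from the inductive definition underlying \prref{4.20}, and the construction for $W_s(l,\lambda,\eta)$ is handled identically using the formulas in Section \ref{m=1}. Specialization $t\mapsto \eta$ then recovers the original module, so $\widetilde{M}_s(l,\lambda)\otimes_{\Bbbk[t]}\Bbbk[t]/(t-\eta)\cong M_s(l,\lambda,\eta)$ (resp.\ $W_s(l,\lambda,\eta)$).

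The third and final step is to verify that these bimodules, together with a finite exceptional list, cover all indecomposables of dimension $d$. By \thref{4.23}, in the case $m>1$ every indecomposable of dimension $d$ is either one of the discrete families (finitely many such in dimension $d$ by the dimension count above), or belongs to some $\{M_s(l,\lambda,\eta)\}_{\eta\in\Bbbk^\times}$, which is covered by $\widetilde{M}_s(l,\lambda)$ up to at most one missing point ($\eta=0$), hence finitely many exceptions. In the case $m=1$, \thref{4.30} gives the analogous statement with the one-parameter families $W_s(l,\lambda,\eta)$ indexed by $\eta\in\overline{\Bbbk}=\Bbbk\cup\{\infty\}$; the extra point $\eta=\infty$ adds at most one exceptional indecomposable per family, which is harmless. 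This verifies the definition of tame representation type in both cases.

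The main technical point is not any single step but rather the bookkeeping: one must confirm that the bimodule $\widetilde{M}_s(l,\lambda)$ really is free over $\Bbbk[t]$ and that the $D(H_{\mathcal D})$-action commutes with $\Bbbk[t]$, which follows immediately from the fact that the defining module relations of $M_s(l,\lambda,\eta)$ are polynomial in $\eta$ of degree one. Given the explicit bases and action formulas already laid out in the proofs of \prref{4.20} and \prref{4.28}, this is essentially a direct transcription.
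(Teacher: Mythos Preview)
Your proposal is correct and follows the same route as the paper, which simply records that the corollary is an immediate consequence of the classification in \thref{4.23} and \thref{4.30}; you have supplied the details of that implication that the paper leaves to the reader. One small slip: $T_s(l,\lambda)$ and $\overline{T}_s(l,\lambda)$ have dimension $sn$, not $2sn$ (the socle has dimension $sl$ and the head $s(n-l)$), but this does not affect your finiteness count.
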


\vskip 1cm

\centerline{ACKNOWLEDGMENTS}

This work is supported by NNSF of China (Nos. 12201545, 12071412).\\

\end{document}